\newtheorem{theorem}{Theorem}[section]
\newtheorem{corollary}[theorem]{Corollary}
\newtheorem{definition}[theorem]{Definition}
\newtheorem{lemma}[theorem]{Lemma}
\newcommand{\R}{\mathbb{R}}
\newcommand{\C}{\mathbb{C}}
\newcommand{\M}{\mathrm{M}}
\newcommand{\g}{\mathrm{g}}
\newcommand{\dd}{\mathrm{d}}
\newcommand{\dv}{\,\mathrm{dv}^{n}}
\newcommand{\dvv}{\,\mathrm{dv}^{2n-1}}
\newcommand{\ds}{\,\mathrm{d\sigma}^{n-1}}
\newcommand{\dss}{\,\mathrm{d\sigma}^{2n-2}}
\newcommand{\I}{\mathcal{I}}
\newcommand{\p}{\partial}
\newcommand{\Op}{\Delta+a(x)\partial_t+q(x)}
\newcommand{\Opf}{\Delta+a_1(x)\partial_t+q_1(x)}
\newcommand{\Ops}{\Delta+a_2(x)\partial_t+q_2(x)}
\newcommand{\Opp}{\Delta-a(x)\partial_t+q(x)}
\newcommand{\norm}[1]{\|#1\|}
\newcommand{\abs}[1]{|#1|}
\newcommand{\set}[1]{\left\{#1\right\}}
\newcommand{\para}[1]{\left(#1\right)}
\newcommand{\cro}[1]{\left[#1\right]}
\newcommand{\seq}[1]{\left<#1\right>}
\newcommand{\To}{\longrightarrow}
\newcommand{\vv}{\mathrm{v}}
\newcommand{\hh}{\mathrm{h}}
\newcommand{\dive}{\textrm{div}}
\newcommand{\bea}{\begin{eqnarray}}
\newcommand{\eea}{\end{eqnarray}}
\newcommand{\beas}{\begin{eqnarray*}}
\newcommand{\eeas}{\end{eqnarray*}}
\newcommand{\bel}{\begin{equation} \label}
\newcommand{\ee}{\end{equation}}
\begin{document}

\title[anisotropic hyperbolic equation]{Simultaneous determination of  two coefficients in the Riemannian hyperbolic equation from boundary measurements}
\author[M. Bellassoued]{Mourad~Bellassoued}
\author[Z. Rezig]{Zouhour Rezig}
\address{University of Tunis El Manar, National Engineering School of Tunis, ENIT-LAMSIN, B.P. 37, 1002 Tunis, Tunisia}
\email{mourad.bellassoued@enit.utm.tn}
\address{University of Tunis El Manar, Faculty of Sciences of Tunis, ENIT-LAMSIN, B.P. 37, 1002 Tunis, Tunisia}
\email{zouhourezig@yahoo.fr}
\date{}
\subjclass[2010]{Primary 35R30, 35L05} 
\keywords{Inverse problem, Wave equation, Riemannian manifold, Stability estimate, Dirichlet-to-Neumann map, geodesical ray transform}

\begin{abstract}
In this paper we consider the inverse problem of determining on a compact Riemannian manifold
the electric potential and the absorption coefficient in the wave equation
 with Dirichlet  data from measured Neumann boundary observations. This information is enclosed in the dynamical
Dirichlet-to-Neumann map associated to the wave equation. We prove in dimension $n\geq 2$ that the
knowledge of the Dirichlet-to-Neumann map for the wave equation uniquely determines
the absorption coefficient and the electric potential and we establish H\"older-type stability.
\end{abstract}
\maketitle
\section{Introduction and main results}
Let $(\M,\g)$ be an $n$-dimensional ($n\geq 2$) compact Riemannian manifold with smooth boundary $\p\M$ where $\g$ denotes a Riemannian metric of class $\mathcal{C}^\infty$. We let $\Delta$ denote  the Laplace-Beltrami operator on $\M$. A summary of the main Riemannian geometric notions needed in this paper is provided in Section 2. In this paper we study an inverse problem for the wave equation in the presence of an absorption coefficient and an eletric 
potential. Given $T>0$, we denote $Q=\M\times(0,T)$ and $\Sigma=\p\M\times(0,T)$. We consider the
following initial boundary value problem for the wave equation with a potential $q$ and an absorption coefficient $a$,
\begin{equation}\label{1.1}
\left\{
\begin{array}{llll}
\para{\partial^2_t-\Op}u=0  & \textrm{in }\; Q,\cr
u(\cdot,0 )=\p_tu(\cdot,0)=0 & \textrm{in }\; \M ,\cr
u=f & \textrm{on } \; \Sigma.
\end{array}
\right.
\end{equation}
Here $a,q:\M\to\R$ are real valued functions in $L^\infty(\M)$ and $f\in H^1(\Sigma)$.
\subsection{Well-posedness and direct problem}
For this paper, we use many of the notational conventions in \cite{[BellDSSF]}. Let $(\M,\g)$ be a (smooth) compact Riemannian
manifold with boundary of dimension $n \geq 2$.
We refer to \cite{[Jost]} for the differential calculus of tensor fields on Riemannian manifolds. If we fix local coordinates $x=(x^1,\ldots,x^n)$ and let
$\para{\frac{\p\, }{\p x^1},\dots,\frac{\p\,}{\p x^n}}$ denote the corresponding tangent vector fields, the inner product and the norm on the tangent
space $T_x\M$ are given by
$$
\g(X,Y)=\seq{X,Y}=\sum_{j,k=1}^n\g_{jk}X^jY^k,
$$
$$
\abs{X}=\seq{X,X}^{1/2},\qquad  X=\sum_{i=1}^nX^i\frac{\p}{\p x^i},\quad Y=\sum_{i=1}^n
Y^i\frac{\p}{\p x^i}.
$$
If $f$ is a $\mathcal{C}^1$ function on $\M$, we define the gradient of $f$ as the vector field $\nabla f$ such that
$$
X(f)=\seq{\nabla f,X}
$$
for all vector fields $X$ on $\M$. In local coordinates, we have
\begin{equation}\label{1.2}
\nabla f=\sum_{i,j=1}^n\g^{ij}\frac{\p f}{\p x^i}\frac{\p}{\p x^j} ,
\end{equation}
where $(\g^{i,j})$ is the inverse of the tensor $\g$.
The metric tensor $\g$ induces the Riemannian volume $\dv=\para{\det \g}^{1/2}\dd x^1\wedge\cdots \wedge \dd x^n$. We denote by $L^2(\M)$ the completion
of $\mathcal{C}^\infty(\M)$ with respect to the usual inner product
$$
\seq{u,v}=\int_\M u(x)\overline{v(x)} \dv,\qquad  u,v\in\mathcal{C}^\infty(\M).
$$
The Sobolev space $H^1(\M)$ is the completion of $\mathcal{C}^\infty(\M)$ with respect to the norm $\norm{\,\cdot\,}_{H^1(\M)}$,
$$
\norm{u}^2_{H^1(\M)}=\norm{u}^2_{L^2(\M)}+\norm{\nabla u}^2_{L^2(\M)}.
$$
The normal derivative is given by
\begin{equation}\label{1.3}
\p_\nu u:=\seq{\nabla u,\nu}=\sum_{j,k=1}^n\g^{jk}\nu_j\frac{\p u}{\p x^k}
\end{equation}
where $\nu$ is the unit outward vector field to $\p \M$.
Moreover, using covariant derivatives (see \cite{[Hebey]}), it is possible to define coordinate invariant norms in $H^k(\M)$, $k\geq 0$.

With these definitions in minds, we  consider the following initial boundary value problem
for the wave equation
\begin{equation}\label{1.4}
\left\{
\begin{array}{llll}
\para{\partial_t^2-\Delta+a(x)\p_t+q(x)}v(x,t)=F(x,t)  & \textrm{in }\,\,Q,\cr
v(x,0)=0,\quad\p_tv(x,0)=0 & \textrm{in }\,\,\M,\cr
v(x,t)=0 & \textrm{on } \,\, \Sigma.
\end{array}
\right.
\end{equation}
We know this problem is well-posed, since we have the following existence and uniqueness result, see \cite{[KKL]}.
\begin{lemma}\label{L.1.1}
Let $T>0$, $a\in L^\infty(\M)$, and $q\in L^\infty(\M)$. Assuming $F\in H^1(0,T;L^2(\M))$ such that $F(\cdot,0)=0$ in $\M$, then there exists a unique solution
$v$ to \eqref{1.4} such that
 $$
 v\in \mathcal{C}^2(0,T;L^2(\M))\cap \mathcal{C}^1(0,T;H^1_0(\M))\cap \mathcal{C}(0,T; H^2(\M)).
 $$ 
 Furthermore, there is a constant $C>0$ such that
\begin{equation}\label{1.5}
\norm{\p_tv(\cdot,t)}_{L^2(\M)}+\norm{\nabla v(\cdot,t)}_{L^2(\M)}\leq
C\norm{F}_{L^2(Q)},
\end{equation}
and
\begin{equation}\label{1.6}
\norm{\p_t^2 v(\cdot,t)}_{L^2(\M)}+\norm{\nabla\p_t v(\cdot,t)}_{L^2(\M)}+\norm{\Delta v(\cdot,t)}_{L^2(\M)}\leq C\norm{F}_{H^1(0,T;L^2(\M))}.
\end{equation}
\end{lemma}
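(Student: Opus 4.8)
The plan is to derive the lemma from a Galerkin approximation scheme combined with two successive energy identities, viewing $a(x)\p_t$ and $q(x)$ as lower-order perturbations of the pure wave operator $\p_t^2-\Delta$. On the compact manifold $(\M,\g)$ the Dirichlet Laplacian is self-adjoint, nonnegative, and has compact resolvent, so its eigenfunctions $(\varphi_k)_{k\ge1}$ form an orthonormal basis of $L^2(\M)$ contained in $H^2(\M)\cap H^1_0(\M)$. I would project \eqref{1.4} onto $V_N=\mathrm{span}\set{\varphi_1,\dots,\varphi_N}$, turning it into a linear second-order ODE system for the coefficients of $v_N(\cdot,t)$; since $F\in H^1(0,T;L^2(\M))\hookrightarrow \mathcal{C}([0,T];L^2(\M))$, this system has continuous (indeed $H^1$) forcing, hence a unique solution $v_N\in\mathcal{C}^2([0,T];V_N)$ with $\p_tv_N(\cdot,0)=v_N(\cdot,0)=0$. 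The convergence $v_N\to v$, uniqueness, and the claimed regularity then follow from a priori bounds uniform in $N$, established exactly as below at the level of the $v_N$'s.

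For the basic estimate \eqref{1.5}, test the equation with $\p_tv$ and integrate over $\M$: by Green's formula the boundary contribution $\int_{\p\M}\p_\nu v\,\p_tv\dse$ vanishes because $v=0$ on $\Sigma$ forces $\p_tv=0$ there, so with $E(t)=\norm{\p_tv(\cdot,t)}_{L^2(\M)}^2+\norm{\nabla v(\cdot,t)}_{L^2(\M)}^2$ one obtains
\[
\frac12\frac{d}{dt}E(t)=\seq{F(\cdot,t),\p_tv(\cdot,t)}-\int_\M a\,\abs{\p_tv}^2\dv-\int_\M q\,v\,\p_tv\dv .
\]
Since $v(\cdot,t)\in H^1_0(\M)$, the Poincaré inequality gives $\norm{v(\cdot,t)}_{L^2(\M)}\le C\norm{\nabla v(\cdot,t)}_{L^2(\M)}\le CE(t)^{1/2}$, whence, using $a,q\in L^\infty(\M)$, the right-hand side is bounded by $CE(t)+\frac12\norm{F(\cdot,t)}_{L^2(\M)}^2$. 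As $E(0)=0$ (the Cauchy data vanish), Gronwall's lemma yields $E(t)\le C\norm{F}_{L^2(Q)}^2$, which is \eqref{1.5}.

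To get \eqref{1.6}, I would differentiate the equation in $t$, which is allowed since $a$ and $q$ do not depend on $t$: the function $w=\p_tv$ solves the same type of initial boundary value problem, with source $\p_tF\in L^2(Q)$, homogeneous boundary value, and Cauchy data $w(\cdot,0)=\p_tv(\cdot,0)=0$ and $\p_tw(\cdot,0)=\p_t^2v(\cdot,0)$; evaluating the equation at $t=0$ and using $v(\cdot,0)=\p_tv(\cdot,0)=0$ together with $F(\cdot,0)=0$ shows $\p_t^2v(\cdot,0)=0$, so $w$ has vanishing Cauchy data and \eqref{1.5} applies to $w$, giving $\norm{\p_t^2v(\cdot,t)}_{L^2(\M)}+\norm{\nabla\p_tv(\cdot,t)}_{L^2(\M)}\le C\norm{\p_tF}_{L^2(Q)}\le C\norm{F}_{H^1(0,T;L^2(\M))}$. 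Rewriting the equation as $\Delta v=\p_t^2v+a\,\p_tv+q\,v-F$ and invoking elliptic regularity for the Dirichlet Laplacian on the manifold $\M$ with smooth boundary, namely $\norm{v(\cdot,t)}_{H^2(\M)}\le C\para{\norm{\Delta v(\cdot,t)}_{L^2(\M)}+\norm{v(\cdot,t)}_{L^2(\M)}}$, together with the bounds already obtained, $a,q\in L^\infty(\M)$, and $\sup_t\norm{F(\cdot,t)}_{L^2(\M)}\le C\norm{F}_{H^1(0,T;L^2(\M))}$, produces \eqref{1.6}. The membership $v\in\mathcal{C}^2(0,T;L^2(\M))\cap\mathcal{C}^1(0,T;H^1_0(\M))\cap\mathcal{C}(0,T;H^2(\M))$ then follows from these uniform bounds by the classical argument combining weak continuity in time with the energy equality.

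The only genuinely delicate point is making the time-differentiation step rigorous without presupposing the regularity one is proving; this is handled by carrying out both energy computations on the Galerkin approximants $v_N$, for which $\p_tv_N$ and $\p_t^2v_N$ are honest functions and testing with them is unambiguous, deriving the estimates uniformly in $N$, and only afterwards passing to the limit and invoking uniqueness. The lower-order terms $a\,\p_tv$ and $q\,v$ never obstruct the argument, since each is controlled pointwise in time by $E(t)$ and absorbed by Gronwall's inequality; one uses only that $\p\M$ is smooth, which is among the standing hypotheses, to justify the elliptic estimate up to the boundary.
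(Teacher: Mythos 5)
Your proof is correct. The paper does not prove this lemma at all — it simply cites the reference [KKL] for the well-posedness result — so there is nothing to compare against; your Galerkin-plus-energy argument (basic estimate by testing with $\p_t v$, higher estimate by differentiating in time and checking that $\p_t^2v(\cdot,0)=0$ thanks to the hypothesis $F(\cdot,0)=0$, then elliptic regularity for $\Delta v=\p_t^2v+a\,\p_t v+qv-F$) is exactly the standard route one would find in that reference or in Evans-type treatments, and you correctly flag that both energy identities must be performed on the approximants before passing to the limit.
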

A proof of the following lemma may be found for instance in \cite{[Ika]}.
\begin{lemma}\label{L.1.2}
Let $f\in H^1(\Sigma)$ be a function such that $f(x,0)=0$ for all $x\in\p\M$.
There exists an unique solution
\begin{equation}\label{1.7}
u\in \mathcal{C}^1(0,T;L^2(\M))\cap \mathcal{C}(0,T;H^1(\M))
\end{equation}
to the problem (\ref{1.1}).  Furthermore, there is a
constant $C>0$ such that
\begin{equation}\label{1.8}
\norm{\p_\nu u}_{L^2(\Sigma)}\leq C\norm{f}_{H^1(\Sigma)}.
\end{equation}
\end{lemma}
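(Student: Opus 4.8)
The plan is to establish Lemma~\ref{L.1.2} in two stages. First I would produce the solution $u$ at the energy level by transposition (duality) against the homogeneous Dirichlet problem \eqref{1.4}, for which Lemma~\ref{L.1.1} furnishes both existence and the extra $H^2$-regularity. Then I would derive the hidden-regularity bound \eqref{1.8} from a Rellich--Pohozaev multiplier identity on the Riemannian manifold $(\M,\g)$.

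\textbf{Existence, uniqueness, and the energy estimate.} Uniqueness follows at once: the difference of two solutions satisfies \eqref{1.4} with $F=0$ and hence vanishes by Lemma~\ref{L.1.1}. For existence I would first build $w:=\p_t u$. Since $f\in H^1(\Sigma)$ we have $\p_t f\in L^2(\Sigma)$, and the compatibility $f(\cdot,0)=0$ together with the vanishing Cauchy data of $u$ force $w(\cdot,0)=\p_t w(\cdot,0)=0$; thus $w$ should solve the wave equation with zero Cauchy data and boundary value $\p_t f\in L^2(\Sigma)$. Such a transposition solution $w\in\mathcal{C}(0,T;L^2(\M))\cap\mathcal{C}^1(0,T;H^{-1}(\M))$ is constructed in the usual way: for a test function $\phi\in L^1(0,T;L^2(\M))$, solve the backward adjoint problem $(\p_t^2-\Delta-a\p_t+q)\psi=\phi$ in $Q$, $\psi(\cdot,T)=\p_t\psi(\cdot,T)=0$, $\psi=0$ on $\Sigma$, which by Lemma~\ref{L.1.1} (reversed in time, with $a$ replaced by $-a$) has a solution with $\psi\in\mathcal{C}(0,T;H^2(\M))$, so $\p_\nu\psi\in L^2(\Sigma)$ with $\|\p_\nu\psi\|_{L^2(\Sigma)}\le C\|\phi\|_{L^1(0,T;L^2(\M))}$; one then defines $w$ through $\int_Q w\,\overline{\phi}\,\dv\dd t=-\int_\Sigma\p_tf\,\overline{\p_\nu\psi}\,\ds\dd t$, obtains $\|w\|_{L^\infty(0,T;L^2(\M))}\le C\|\p_tf\|_{L^2(\Sigma)}$ from Riesz duality, and promotes $L^\infty$ to $\mathcal{C}$ by density. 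Setting $u(\cdot,t):=\int_0^t w(\cdot,s)\,\dd s$ gives $u\in\mathcal{C}^1(0,T;L^2(\M))$; to get $u\in\mathcal{C}(0,T;H^1(\M))$ I would use elliptic regularity in the space variable, reading $\Delta u=\p_t^2u+a\p_tu+qu$ at fixed $t$ as an equation with right-hand side in $\mathcal{C}(0,T;H^{-1}(\M))$ (because $\p_t^2u=\p_tw\in\mathcal{C}(0,T;H^{-1}(\M))$) and Dirichlet data $f(\cdot,t)$, which belongs to $\mathcal{C}(0,T;H^{1/2}(\p\M))$ since $H^1(\Sigma)\hookrightarrow\mathcal{C}(0,T;H^{1/2}(\p\M))$. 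One then checks that this $u$ solves \eqref{1.1} and obtains
\begin{equation*}
\|\p_tu\|_{\mathcal{C}(0,T;L^2(\M))}+\|u\|_{\mathcal{C}(0,T;H^1(\M))}\le C\|f\|_{H^1(\Sigma)}.
\end{equation*}

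\textbf{The boundary estimate.} It is enough to prove \eqref{1.8} for $f\in\mathcal{C}^\infty(\overline{\Sigma})$ with $f(\cdot,0)=0$: such data are dense in $\{f\in H^1(\Sigma):f(\cdot,0)=0\}$, for them a genuinely smooth lifting into $Q$ exists so that Lemma~\ref{L.1.1} yields a smooth $u$ (legitimizing the integrations by parts below), and the general case follows by passing to the limit using the continuity of the transposition map $f\mapsto u$. I would fix a $\mathcal{C}^\infty$ vector field $N$ on $\M$ with $N=\nu$ on $\p\M$ (possible since $\p\M$ is a smoothly embedded hypersurface), multiply $\p_t^2u-\Delta u=-a\p_tu-qu$ by $Nu=\seq{\nabla u,N}$, and integrate over $Q$. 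Integrating by parts in $t$ and applying Green's formula in $x$ (using $\mathrm{Hess}\,u(\nabla u,N)=\tfrac12 N(|\nabla u|^2)$ and $\int_\M N(h)\,\dv=\int_{\p\M}h\,\ds-\int_\M h\,\dive N\,\dv$, together with $N|_{\p\M}=\nu$, $\p_tu|_\Sigma=\p_tf$, and the splitting $|\nabla u|^2=|\p_\nu u|^2+|\nabla_{\p\M}f|^2$ on $\Sigma$), all the boundary contributions on $\Sigma$ collapse, and the identity takes the schematic form
\begin{equation*}
\tfrac12\int_\Sigma|\p_\nu u|^2\,\ds\dd t=\Big[\int_\M\p_tu\,Nu\,\dv\Big]_{0}^{T}+\tfrac12\int_\Sigma\big(|\nabla_{\p\M}f|^2-|\p_tf|^2\big)\,\ds\dd t+\int_Q\mathcal{R}\,\dv\dd t,
\end{equation*}
where $\mathcal{R}$ is a finite sum of terms bounded pointwise by $C(|\p_tu|^2+|\nabla u|^2+|u|^2)$ with $C$ depending only on $N$, $\|a\|_{L^\infty}$, $\|q\|_{L^\infty}$. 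The $t=0$ contribution of the first term vanishes because $\p_tu(\cdot,0)=0$, the $t=T$ contribution is $\le C\|\p_tu(\cdot,T)\|_{L^2(\M)}\|u(\cdot,T)\|_{H^1(\M)}$, the second term is $\le C\|f\|_{H^1(\Sigma)}^2$, and $\int_Q\mathcal{R}\le C\int_Q(|\p_tu|^2+|\nabla u|^2+|u|^2)$; all are $\le C\|f\|_{H^1(\Sigma)}^2$ by the energy estimate above, which gives \eqref{1.8}.

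\textbf{Main obstacle.} The delicate point is not the multiplier computation but the regularity bookkeeping it rests on. A naive lifting of $f\in H^1(\Sigma)$ into $Q$ loses half a derivative upon taking its Laplacian, so $u$ cannot simply be obtained by subtracting a lift and invoking Lemma~\ref{L.1.1}; the ``hidden regularity'' has to be reached through the transposition-plus-elliptic-regularity route, and it is exactly that energy bound which closes the multiplier identity, since the identity by itself only controls $\|\p_\nu u\|_{L^2(\Sigma)}$ by the interior energy. On the manifold one must additionally be careful to absorb every curvature/Christoffel term produced by $\nabla N$ into the harmless remainder $\mathcal{R}$, and to check that $\int_\Sigma|\p_\nu u|^2$ does not cancel but survives with a nonzero coefficient after collecting the boundary terms --- this is where the choice $N|_{\p\M}=\nu$ and the tangential/normal splitting of $|\nabla u|^2$ on $\Sigma$ are essential. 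Finally, the hypothesis $f(\cdot,0)=0$ is used twice: to give $w=\p_tu$ vanishing Cauchy data (so the transposition construction applies and $\p_\nu u$ stays in $L^2$ up to $t=0$), and to kill the $t=0$ boundary term in the multiplier identity.
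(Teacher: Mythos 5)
The paper does not actually prove Lemma \ref{L.1.2}; it simply refers the reader to \cite{[Ika]}. Your proposal supplies the standard argument behind such references (transposition against the adjoint homogeneous Dirichlet problem for existence at the energy level, then a Rellich-type multiplier identity with a vector field $N$ extending $\nu$ for the hidden regularity of $\p_\nu u$), and the overall architecture is sound: the reduction to $w=\p_t u$ with boundary datum $\p_t f\in L^2(\Sigma)$, the duality formula $\int_Q w\,\overline{\phi}=-\int_\Sigma \p_t f\,\overline{\p_\nu\psi}$, the recovery of $u\in\mathcal{C}(0,T;H^1(\M))$ by elliptic regularity from $\Delta u\in\mathcal{C}(0,T;H^{-1}(\M))$ and $f\in\mathcal{C}(0,T;H^{1/2}(\p\M))$, and the bookkeeping in the multiplier identity (the surviving coefficient $\tfrac12$ in front of $\int_\Sigma|\p_\nu u|^2$ is correct) are all as in the classical treatment of Lasiecka--Lions--Triggiani.

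One link in the chain is not justified as written. You claim that the backward adjoint problem with source $\phi$ satisfies $\norm{\p_\nu\psi}_{L^2(\Sigma)}\le C\norm{\phi}_{L^1(0,T;L^2(\M))}$ ``by Lemma \ref{L.1.1}.'' Lemma \ref{L.1.1} requires $F\in H^1(0,T;L^2(\M))$ with $F(\cdot,0)=0$ and only yields interior $H^2$ bounds controlled by $\norm{F}_{H^1(0,T;L^2(\M))}$; it gives neither the $L^1$-in-time control nor, directly, an $L^2(\Sigma)$ bound on the normal derivative. The inequality you need is itself the \emph{direct} hidden-regularity estimate for the homogeneous Dirichlet problem, and it must be proved first --- by applying your own multiplier identity to $\psi$ (where $\psi|_\Sigma=0$, so $|\nabla\psi|^2=|\p_\nu\psi|^2$ on $\Sigma$ and the identity closes using only the energy bound $\norm{\p_t\psi}_{L^\infty(0,T;L^2)}+\norm{\nabla\psi}_{L^\infty(0,T;L^2)}\le C\norm{\phi}_{L^1(0,T;L^2)}$, valid for $L^1$-in-time sources by Gronwall). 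So the correct order of operations is: energy estimate and multiplier identity for the homogeneous problem, then transposition to define $w$, then the multiplier identity for $u$ to get \eqref{1.8}. Since you already carry out the multiplier computation in the second half, this is a reordering rather than a missing idea, but as stated the appeal to Lemma \ref{L.1.1} does not deliver the estimate on which the whole transposition construction rests.
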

\subsection{Inverse problem and Main result}
From the physical viewpoint, our inverse problem consists in
determining the properties (e.g. an absorption coefficient) of an
inhomogeneous medium by probing it with disturbances generated on
the boundary. The measurements are responses of the medium to these
disturbances which are measured on the boundary, and the
goal is to recover the potential $q(x)$ and the absorption coefficient $a(x)$ which describes the property of the
medium. Here we assume that the medium is quiet initially, and $f$
is a disturbance which is used to probe the medium. Roughly
speaking, the data is $\p_\nu u$ measured on the boundary for
different choices of $f$.
\medskip

 We may define the Dirichlet to Neumann (D-N) map associated with hyperbolic problem (\ref{1.1}) by
\begin{equation}\label{1.9}
\Lambda_{a,q}(f)=\p_\nu u,\quad f\in \mathcal{H}_0^{1}(\Sigma)=\set{f\in H^1(\Sigma),\,\, f(\cdot,0)=0\,\textrm{on}\,\,\p\M}.
\end{equation}
Therefore the Dirichlet-to-Neumann map  $\Lambda_{a,q}$ defined by (\ref{1.9}) is continuous. We denote by $\norm{\Lambda_{a,q}}$ its norm in $ \mathscr{L}\para{\mathcal{H}_0^1(\Sigma);L^2(\Sigma)}$.
\smallskip

For a Riemannian manifold $(\M,\g)$ with boundary $\p\M$, we denote by $\nabla$ the Levi-Civita connection on $(\M,\g)$. For a point $x \in \p\M$, the second quadratic form of the boundary
$$
\Pi(\xi,\xi)=\seq{\nabla_\xi\nu,\xi},\quad \xi\in T_x(\p\M),
$$
is defined on the space $T_x(\p\M)$. We say that the boundary is strictly convex if the form is positive-definite for all $x \in \p\M$ (see \cite{[Sh]}).
\begin{definition}
We say that the Riemannian manifold $(\M,\g)$ (or that the metric $\g$) is simple in $\M$, if $\p \M$ is strictly convex with respect to $\g$, and for any $x\in \M$, the exponential map $\exp_x:\exp_x^{-1}(\M)\To \M$ is a diffeomorphism. The latter means that every two points $x; y \in \M$ are joined by a unique geodesic smoothly depending on $x$ and $y$.
\end{definition}
Note that if $(\M,\g)$ is simple, one can extend it to a simple manifold $\M_{1}$ such that $\M_1^{\textrm{int}}\supset\M$.
\smallskip

Let us now introduce the admissible sets of absorption coefficients $a$ and electric potentials $q$. Let $m_1,m_2>0$ and $\eta> n/2$ be given, set
\begin{equation}\label{1.10}
\mathscr{A}(m_1,\eta)= \set{a\in W^{2,\infty}(\M),\,\,\norm{a}_{H^\eta(\M)}\leq m_1},
\end{equation}
and 
\begin{equation}\label{1.11}
\mathscr{Q}(m_2)= \set{q\in W^{2,\infty}(\M),\,\,\norm{q}_{H^2(\M)}\leq m_2}.
\end{equation}
Introduce one more notation. Given $x\in\M$ and a 2-plane $\pi\subset T_x\M$, denote by $K(x,\pi)$ the sectional curvature of $\pi$ at $x$. For $\xi\in T_x\M$ with $\abs{\xi}=1$, put
$$
K(x,\xi)=\sup_{\pi,\,\xi\in\pi}\,K(x,\pi),\quad K^+(x,\xi)=\max\{0,K(x,\xi)\}.
$$
Define the following characteristic:
$$
k^+(\M,\g)=\sup_\gamma\int_{\ell_1}^{\ell_2}tK^+(\gamma(t),\dot{\gamma}(t))dt,
$$
where $\gamma\,:\,[\ell_1,\ell_2]\to\M$, ranges in the set of all unit speed geodesic in $\M$.\\
The main results of this paper are as follows.
\begin{theorem}\label{Th2}
Let $(\M,\g)$ be a simple compact Riemannian manifold with boundary of dimension $n \geq 2$ such that $k^+(\M,\g)<1$, and let $T>\textrm{Diam}_\g(\M)$.
There exist  $C>0$ and $\gamma\in(0,1)$ such that for any $a_1,a_2\in\mathscr{A}(m_1,\eta)$ and $q_1,q_2\in\mathscr{Q}(m_2)$ coincide near the boundary $\p\M$, the following estimate holds true
\begin{equation}\label{1.12}
\norm{a_1-a_2}_{L^2(\M)}+\norm{q_1-q_2}_{L^2(\M)}\leq C\norm{\Lambda_{a_1,q_1}-\Lambda_{a_2,q_2}}^\gamma
\end{equation}
where $C$ depends on $\M$, $m_1,m_2$, $\eta$ and $n$.
\end{theorem}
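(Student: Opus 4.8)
\textbf{Proof strategy.}

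The plan is to combine high-frequency geometric optics solutions concentrating along geodesics with the stability theory for the weighted geodesic ray transform on simple manifolds, recovering the two coefficients one after the other; set $\varepsilon:=\norm{\Lambda_{a_1,q_1}-\Lambda_{a_2,q_2}}$ and assume it small. \emph{(Step 1: an integral identity.)} Given $f\in\h_0^1(\Sigma)$, let $u_j$ be the solution of \eqref{1.1} with coefficients $(a_j,q_j)$ and Dirichlet data $f$, and put $w=u_1-u_2$, so that $w$ has homogeneous Cauchy and Dirichlet data, $\p_\nu w=(\Lambda_{a_1,q_1}-\Lambda_{a_2,q_2})f$, and $\para{\p_t^2-\Delta+a_1\p_t+q_1}w=-(a_1-a_2)\p_t u_2-(q_1-q_2)u_2$. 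Multiplying by a solution $v$ of the backward adjoint problem $\para{\p_t^2-\Delta-a_1\p_t+q_1}v=0$, $v(\cdot,T)=\p_t v(\cdot,T)=0$, and integrating by parts over $Q$ --- the time boundary terms vanishing by the Cauchy conditions and the lateral ones because $w|_\Sigma=0$ --- yields, with Lemma \ref{L.1.2},
$$
\int_Q\cro{(a_1-a_2)\p_t u_2+(q_1-q_2)u_2}\,v\dv\dd t=-\int_\Sigma(\Lambda_{a_1,q_1}-\Lambda_{a_2,q_2})f\cdot v\ds\dd t .
$$

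\emph{(Step 2: geometric optics solutions.)} Since $(\M,\g)$ is simple, fix a simple extension $\M_1$ with $\M_1^{\textrm{int}}\supset\M$; for $y\in\M_1\setminus\M$ the function $\varphi(x)=d_\g(x,y)$ is smooth on $\M$ and solves the eikonal equation $\abs{\Dd\varphi}=1$. For a large parameter $\lambda$ and any fixed $K\in\N$ I construct a solution of \eqref{1.1} with $(a_2,q_2)$ of the form $u_2=e^{i\lambda(t-\varphi(x))}\para{\sum_{j=0}^{K}\lambda^{-j}b_j}+R_\lambda$, and a solution of the backward adjoint problem of the form $v=e^{-i\lambda(t-\varphi(x))}\para{\sum_{j=0}^{K}\lambda^{-j}c_j}+\widetilde R_\lambda$. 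The eikonal equation holds automatically; the leading amplitudes solve, along the geodesics issued from $y$, the transport equations $2\para{\p_t b_0+\seq{\Dd\varphi,\Dd b_0}}+\para{\Delta\varphi+a_2}b_0=0$ and $2\para{\p_t c_0+\seq{\Dd\varphi,\Dd c_0}}+\para{\Delta\varphi-a_1}c_0=0$ (with analogous recursions for $j\geq1$), the $b_j,c_j$ and their $t$-derivatives being cut off to vanish near $t=0$ and $t=T$ (possible since $T>\textrm{Diam}_\g(\M)$). By Lemma \ref{L.1.1} the remainders satisfy $\norm{R_\lambda},\norm{\widetilde R_\lambda}\leq C\lambda^{-K}$ in the relevant energy norms, while the Dirichlet traces obey $\norm{u_2|_\Sigma}_{H^1(\Sigma)}+\norm{v|_\Sigma}_{L^2(\Sigma)}\leq C\lambda$. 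Multiplying the two transport equations shows that the non-oscillatory product $b_0c_0$ equals the Jacobian of $\exp_y$ times a free amplitude times the attenuation factor $e^{\frac12\int(a_1-a_2)}$ along the geodesic; passing to geodesic polar coordinates centred at $y$ cancels this Jacobian against the volume element, so that for any $h\in L^2(\M)$ vanishing near $\p\M$ (such as $a_1-a_2$ or $q_1-q_2$) the quantity $\int_Q h\,b_0c_0\dv\dd t$ becomes a weighted geodesic ray transform $I_w h$ with a fixed smooth positive weight $w=w(a_1,a_2,\g)$, uniformly controlled for $a_1,a_2\in\mathscr{A}(m_1,\eta)$, the free amplitude and the point $y$ playing the role of the variables on the space of geodesics of $\M$.

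\emph{(Step 3: recovering $a$, then $q$.)} Inserting these solutions into the identity of Step 1 and using $\p_t u_2=i\lambda u_2+\cdots$, division by $i\lambda$ and $\lambda\to\infty$ make the $(q_1-q_2)$-term and all remainder contributions $O(\lambda^{-1})$ while the right-hand side is $O(\varepsilon)$, giving $\abs{I_w(a_1-a_2)}\leq C\varepsilon$; letting the free amplitude and $y$ range, $\norm{I_w(a_1-a_2)}_{L^2}\leq C\varepsilon$. Since $I_w$ smooths by half a derivative, interpolation with the a priori bound $\norm{a_1-a_2}_{H^\eta}\leq 2m_1$ combines with the stability estimate for the weighted geodesic ray transform on a simple manifold with $k^+(\M,\g)<1$ to give $\norm{a_1-a_2}_{L^2(\M)}\leq C\varepsilon^{\gamma_1}$ for some $\gamma_1\in(0,1)$. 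Returning to the same identity \emph{without} dividing by $\lambda$: the term $i\lambda\int_Q(a_1-a_2)b_0c_0\dv\dd t$ and the corrections produced by the $\p_t b_j$ are all of the form (bounded quantity)$\,\times\norm{a_1-a_2}_{L^2}$, hence $O(\lambda\varepsilon^{\gamma_1})$; the remainder terms are $O(\lambda^{-K+1})$ and $O(\lambda^{-1})$, and the right-hand side is $O(\lambda\varepsilon)$. Choosing $\lambda=\varepsilon^{-\gamma_1/2}$ and $K$ large gives $\abs{I_w(q_1-q_2)}\leq C\varepsilon^{\gamma_1/2}$, whence, by the same scheme (ranging over the data, interpolating with $\norm{q_1-q_2}_{H^2}\leq 2m_2$, and invoking once more the ray transform stability), $\norm{q_1-q_2}_{L^2(\M)}\leq C\varepsilon^{\gamma_2}$; taking $\gamma=\min(\gamma_1,\gamma_2)$ gives \eqref{1.12}.

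\emph{(Main obstacle.)} The technical core is the geometric optics construction on $(\M_1,\g)$ with enough amplitude terms and the matching remainder estimates in the norms supplied by Lemmas \ref{L.1.1} and \ref{L.1.2}, together with the decoupling of the two coefficients: one must first extract $a_1-a_2$ from the $\lambda^{-1}$-rescaled limit of the identity and only afterwards control $q_1-q_2$, checking that in that second step every error term carries a factor $\norm{a_1-a_2}_{L^2}$ rather than being merely $O(1)$. It is this second, $\lambda$-dependent optimisation --- combined with the interpolation against the a priori Sobolev bounds needed to feed the ray transform stability estimate, which is the only place where the curvature hypothesis $k^+(\M,\g)<1$ intervenes --- that forces the H\"older rather than Lipschitz exponent $\gamma<1$.
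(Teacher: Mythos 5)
Your overall architecture --- the integral identity obtained by pairing $w=u_1-u_2$ with an adjoint solution, oscillatory geometric-optics solutions built on the distance function to a point of a simple extension $\M_1$, extraction of a ray transform of $a_1-a_2$ at order $\lambda$ and of $q_1-q_2$ at order $1$, and interpolation with the a priori Sobolev bounds to reach a H\"older exponent --- is the same as the paper's. The genuine gap is in the ray-transform step. What your construction produces is $\int h\,b_0c_0$, i.e.\ a \emph{weighted} geodesic ray transform $I_w h$ whose weight $w=e^{\frac12\int(a_1-a_2)}$ depends on the unknown coefficients, and you then invoke ``the stability estimate for the weighted geodesic ray transform on a simple manifold with $k^+<1$''. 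No such estimate is available: Theorem \ref{theorem} is proved (via the Pestov identity) only for the unweighted transform $\I$, and stability for weighted transforms, uniformly over a class of unknown weights, is not an off-the-shelf fact in dimension $n\geq 2$ under the hypothesis $k^+(\M,\g)<1$. Sections 4 and 5 of the paper are organized precisely to avoid this object: for $a$, the exact relation $a\psi_a=-2\p_t\psi_a$ lets the time integration produce the quantity $\exp\para{-\tfrac12\I (a)}-1$, and the elementary inequality $|X|\leq e^M|e^X-1|$ then bounds the \emph{unweighted} $\I(a)$ pointwise (Lemmas \ref{L.4.2} and \ref{L.4.4}); for $q$, the weight $\psi_a$ is stripped off by writing $\int q\theta_2\overline{\theta}_1=\int q\theta_2\overline{\theta}_1(1-\psi_a)+\int q\theta_2\overline{\theta}_1\psi_a$ and using $|1-\psi_a|\leq C\|a\|_{\mathcal{C}^0(\M)}$ together with the previously established $\mathcal{C}^0$-smallness of $a$ --- which is exactly why the hypothesis $\|a\|_{H^\eta(\M)}\leq m_1$ with $\eta>n/2$ and the Sobolev-embedding interpolation \eqref{4.26} appear. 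You must supply one of these two devices; as written, the appeal to a weighted ray-transform stability estimate is unsupported.

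A secondary imprecision: the intermediate claim $\|I_w(a_1-a_2)\|_{L^2(\p_+S\M_1)}\leq C\varepsilon$ is too strong. The estimate you actually derive is tested against amplitudes $\Psi$, with $\|\Psi\|_{H^2(S_y\M_1)}$ on the right-hand side; converting it into a pointwise or $L^2$ bound on the ray transform requires a mollification argument (the Poisson-kernel Lemmas \ref{L.4.3} and \ref{L.4.4}, which use the Lipschitz continuity of $\xi\mapsto\I(a)(y,\xi)$) and a further balancing of the mollification parameter against $h$, which degrades $\varepsilon$ to $\varepsilon^{\beta/(\beta+\delta)}$ as in \eqref{4.23}. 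This does not change the H\"older form of the final estimate, but the exponent bookkeeping in your Step 3 (in particular the choice $\lambda=\varepsilon^{-\gamma_1/2}$ in the $q$-step) must be redone with these losses taken into account.
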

By Theorem \ref{Th2}, we can readily derive the following uniqueness result
\begin{corollary}
Let $(\M,\g)$ be a simple compact Riemannian manifold with boundary of dimension $n \geq 2$, such that $k^+(\M,\g)<1$ and let $T>\textrm{Diam}_\g(\M)$, we have that $\Lambda_{a_1,q_1}=\Lambda_{a_2,q_2}$ implies
$a_1 =a_2$ and $q_1=q_2$ almost everywhere in $\M$.
\end{corollary}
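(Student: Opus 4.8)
The Corollary is the zero–measurement specialization of Theorem \ref{Th2}: if $\Lambda_{a_1,q_1}=\Lambda_{a_2,q_2}$, the right–hand side of \eqref{1.12} vanishes, so $a_1=a_2$ and $q_1=q_2$ in $L^2(\M)$, hence a.e. (the hypothesis that the coefficients coincide near $\p\M$, used in Theorem \ref{Th2}, is removed beforehand by a boundary–determination argument: concentrating geometric optics solutions near a point of $\p\M$ shows that $\Lambda_{a,q}$ determines the full Taylor jets of $a$ and $q$ on $\p\M$, and a standard reduction then brings us to coefficients agreeing in a collar of $\p\M$). So the content lies in Theorem \ref{Th2}, which I would prove as follows. \emph{An integral identity:} fix $f\in\mathcal{H}^1_0(\Sigma)$, let $u_j$ solve \eqref{1.1} with $(a,q)=(a_j,q_j)$ and Dirichlet data $f$, and put $u=u_1-u_2$; then $u$ has zero Cauchy data and zero lateral data, $(\p_t^2-\Opf)u=(a_2-a_1)\p_tu_2+(q_2-q_1)u_2$, and $\p_\nu u|_\Sigma=(\Lambda_{a_1,q_1}-\Lambda_{a_2,q_2})f$. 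Testing against a solution $w$ of the adjoint equation $(\p_t^2-\Delta-a_1\p_t+q_1)w=0$ with $w(\cdot,T)=\p_tw(\cdot,T)=0$ and integrating by parts over $Q$ annihilates all $t$–boundary terms and leaves
\begin{equation*}
\int_Q\big[(a_2-a_1)\p_tu_2+(q_2-q_1)u_2\big]\,\overline w\,\dv\,\dd t=-\int_\Sigma(\Lambda_{a_1,q_1}-\Lambda_{a_2,q_2})f\cdot\overline w\,\ds\,\dd t .
\end{equation*}

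\emph{Geometric optics.} Extend $(\M,\g)$ to a simple manifold $\M_1$ with $\M\subset\M_1^{\textrm{int}}$, and extend $a_1-a_2$ and $q_1-q_2$ by zero (legitimate since they vanish near $\p\M$); fix $y_0\in\M_1\setminus\M$ so that $\varphi(x)=d_\g(x,y_0)$ is a smooth global solution of the eikonal equation $\abs{\nabla\varphi}=1$ on $\M$. For large $\lambda>0$, along a fixed maximal geodesic $\gamma$ meeting $\M$, construct approximate solutions $u_2\approx e^{i\lambda(\varphi(x)-t)}b_2(x)$ of the $(a_2,q_2)$–equation and $w\approx e^{i\lambda(\varphi(x)-t)}b_1(x)$ of the adjoint equation, with amplitudes concentrated near $\gamma$ (Gaussian–beam/coherent–state type) solving the first–order transport ODEs along $\gamma$; these transport equations carry the absorption terms, so that $b_2$ contributes the factor $\exp(-\tfrac12\int a_2)$ and $b_1$ the factor $\exp(\tfrac12\int a_1)$ along $\gamma$. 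Normalize so that the $L^2$–type norms of $u_2,w$ entering the identity are $O(1)$ in $\lambda$ while $\norm{f}_{H^1(\Sigma)}=\norm{u_2|_\Sigma}_{H^1(\Sigma)}=O(\lambda)$, and control the remainders in powers of $\lambda^{-1}$ using Lemma \ref{L.1.1} together with the a priori bounds \eqref{1.10}--\eqref{1.11}.

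\emph{Inversion and conclusion.} Inserting these solutions into the identity the phases cancel, so $u_2\overline w$ is non–oscillatory; since $\p_t$ acting on the phase produces $-i\lambda$, the leading $O(\lambda)$ part of the left side is $-i\lambda$ times an integral of $(a_2-a_1)$ against $b_1b_2$, and because $b_1b_2$ carries the attenuation $\exp(\tfrac12\int(a_1-a_2))$ a telescoping identity converts this into the ordinary geodesic ray transform $I(a_1-a_2)(\gamma)$; the right side is $O\!\big(\lambda\,\norm{\Lambda_{a_1,q_1}-\Lambda_{a_2,q_2}}\big)$, so dividing by $\lambda$ and letting $\lambda\to\infty$ bounds $\abs{I(a_1-a_2)(\gamma)}$ (in a suitable norm over geodesics) by $\norm{\Lambda_{a_1,q_1}-\Lambda_{a_2,q_2}}$, the $O(1)$ remainder being absorbed via \eqref{1.10}. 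Simplicity together with $k^+(\M,\g)<1$ is exactly what makes the geodesic ray transform on $\M_1$ injective with a stability estimate of Sharafutdinov (Pestov–identity) type, $\norm{h}_{L^2(\M)}\lesssim\norm{Ih}_{H^1}$; combining this with the measurement bound, interpolating against the a priori $H^\eta$ bound and optimizing $\lambda$ to balance $\norm{\Lambda_{a_1,q_1}-\Lambda_{a_2,q_2}}$ against the $\lambda^{-1}$ errors yields $\norm{a_1-a_2}_{L^2(\M)}\le C\norm{\Lambda_{a_1,q_1}-\Lambda_{a_2,q_2}}^{\gamma}$ for some $\gamma\in(0,1)$. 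Re–running the scheme with amplitudes chosen so that the $O(\lambda)$ absorption contribution is suppressed, the surviving $O(1)$ term of the left side is the geodesic ray transform of $q_1-q_2$, up to an error governed by the already–estimated $\norm{a_1-a_2}$; inverting the ray transform once more gives the companion bound for $q_1-q_2$, and adding the two proves \eqref{1.12}. The main obstacle is the quantitative bookkeeping in this last step: estimating the geometric optics remainders uniformly in $\lambda$ in norms strong enough to exploit \eqref{1.10}--\eqref{1.11}, and absorbing the one–derivative loss in the ray–transform stability estimate through interpolation so that, after optimizing in $\lambda$, only a Hölder (not Lipschitz) loss survives — while cleanly separating the contributions of $a$ and $q$, which sit at consecutive powers of $\lambda$ and both couple to the $\p_t$–part of the phase.
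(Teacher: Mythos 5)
Your proposal is correct and takes essentially the same route as the paper: the Corollary is obtained exactly as you say, by setting $\Lambda_{a_1,q_1}=\Lambda_{a_2,q_2}$ in \eqref{1.12}, and your sketch of Theorem \ref{Th2} mirrors the paper's architecture (geometric optics solutions carrying the attenuation factors $\exp(\mp\tfrac12\int a_j)$ along geodesics, an integral identity with the adjoint solution, reduction to the geodesic ray transform of $a_1-a_2$ and then of $q_1-q_2$, the Pestov-identity stability estimate of Theorem \ref{theorem}, and interpolation to trade the derivative loss for a H\"older exponent). One remark: your observation that the hypothesis ``$a_1=a_2$, $q_1=q_2$ near $\p\M$'' must be removed by a separate boundary-determination argument is a genuine point that the paper passes over in silence when it claims the Corollary follows ``readily'' from Theorem \ref{Th2}.
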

\subsection{Relation to the literature}
In recent years significant progress has been made for the problem of identifying one coefficient in the euclidean hyperbolic equation ($\g_{ij}=\delta_{ij}$). In \cite{[Rakesh-Symes]}, Rakesh and Symes prove that the D-to-N map determines uniquely the time-independent potential in a wave equation. Ramm and Sj\"ostrand \cite{[Ramm-Sjostrand]} has extended the result in \cite{[Rakesh-Symes]} to the case of time-dependent potentials.
Isakov \cite{[Isakov1]} has considered the simultaneous uniqueness determination of a zeroth order coefficient and an absorption coefficient.  A key ingredient in the existing results is the construction of complex geometric optics solutions of the wave equation, concentrated along a line, and the relationship between the hyperbolic D-to-N map and the $X$-ray transform play a crucial role. In \cite{[Pestov]} Pestov propose a linear procedure based on the boundary control method for determining both coefficients, absorbtion and speed, for the wave equation. 
\smallskip

For the stability estimates, Sun \cite{[Sun]} established in the Euclidean case stability estimates for potentials from the Dirichlet-to-Neumann
map. In \cite{[BCY]} the authors consider the stability in an inverse problem of determining the potential $q$ entering the wave equation in a bounded smooth domain of $\R^d$ from boundary observations. The observation is given by a hyperbolic (dynamic) Dirichlet to Neumann map associated to a wave equation and prove a $\log$-type stability estimate in determining $q$ from a partial Dirichlet to Neumann map. For the wave equation with a lower order term $q(t, x)$, Waters \cite{[Waters]} proves that we can recover the $X$-ray transform of time dependent potentials $q(t, x)$ from
the dynamical Dirichlet-to-Neumann map in a stable way. He derive
conditional H\"older stability estimates for the $X$-ray transform of $q(t, x)$.
\smallskip

In the case of Riemannian wave equation, Bellassoued and Dos Santos Ferriera \cite{[BellDSSF]}  seek stability estimates in the inverse problem of determining the potential or the velocity in a wave equation posed in a simple riemannian wave equation $(\M,\g)$ from measured Neumann boundary observations. The authors prove in dimension $n\geq 2$ that the knowledge of the
Dirichlet-to-Neumann map for the wave equation uniquely determines the
electric potential and  they show a Hölder-type stability in determining the potential. Similar results for the determination of velocities close
to $1$ is also given.
\smallskip

In \cite{[SU]} and \cite{[SU2]} Stefanov and Uhlmann considered the inverse problem of determining a Riemannian metric $\g$
on a Riemannian manifold $(\M,\g)$ with boundary from the hyperbolic Dirichlet-to-Neumann map associated to solutions of the wave equation $(\p_t^2-\Delta_\g)u=0$. A H\"older type of conditional stability estimate was proven in \cite{[SU]} for metrics close enough to the Euclidean metric in $\mathcal{C}^k$, $k\geq 1$ or for generic simple metrics in \cite{[SU2]}. It is clear that one cannot hope to uniquely determine the metric
$\g=(\g_{jk})$ from the knowledge of the Dirichlet-to-Neumann map
$\Lambda_{\g,a,\,q}$. As was noted in \cite{[SU]}, the Dirichlet-to-Neumann map
is invariant under a gauge transformation of the metric~$\g$. Namely, given a diffeomorphism $\psi:\M\to \M$ such that
$\psi|_{\p\M}={\rm Id}$ one has $\Lambda_{\psi^*\g,a,\,q}=\Lambda_{\g,a,\,q}$
where $\psi^*\g$ denotes the pullback of the metric $\g$ under $\psi$.
\smallskip

In \cite{[Montalo]}, Montalto studies the stability of simultaneously recovering the Riemannian metric $\g$, a covector field $b$ and a potential $q$ in a Riemannian manifold $\M$ from the boundary measurements modeled by the Dirichlet-to-Neumann map. He shows that, assuming the metric is close to a generic simple metric, and the two covector close, a conditional H\"older-type stability for the recovery holds up to the natural gauge transformations that fix the boundary. This result generalizes the results in \cite{[BellDSSF]} and \cite{[SU]}.
\smallskip

In \cite{[BK]} Belishev and Kurylev gave an affirmative answer  to the general problem of finding a smooth metric from the Dirichlet-to-Neumann map.
Their approach is based on the boundary control method introduced by Belishev \cite{[1]} and uses in an essential way an unique continuation
property. Unfortunately it seems unlikely that this method would provide stability estimates even under geometric and topological restrictions.
Their method also solves the problem of recovering $\g$ through boundary spectral data.
The boundary control method  gave rise to several refinements of the results of \cite{[BK]}: one can cite for instance \cite{[KL]}, \cite{[KKL]}
and \cite{[AKKLT]}.  
\smallskip

The importance of control theory for inverse problems was first understood by Belishev \cite{[1]}. He used control theory to develop the first variant of the boundary control (BC) method. Later, the idea based on control theory was combined with the geometrical ones. The
importance of the geometry for inverse problems follows the fact that any elliptic second-order differential operator gives rise to a
Riemannian metric in the corresponding domain. The role of this metric becomes clearer if we consider the solutions of the corresponding
wave equation. Indeed, these waves propagate with the unit speed along geodesics of this Riemannian metric. These geometric ideas where
introduced to the boundary control method in \cite{[KL]}, \cite{[KKL]}.
\smallskip

In this paper, the inverse problem under consideration is whether, for a fixed metric $\g$, the knowledge of the
Dirichlet-to-Neumann map $\Lambda_{\g,a,\,q}$ on the boundary
 uniquely determines the electric potential $q$ and the absorption coefficient $a$.
 \smallskip

Uniqueness properties for local Dirichlet-to-Neumann maps associated with the wave equation are rather well understood (e.g.,
Belishev \cite{[1]}, Katchlov, Kurylev and Lassas \cite{[KKL]},
Kurylev and Lassas \cite{[KL]}) but stability for such operators is far from being apprehended.
For instance, one may refer to Isakov and Sun \cite{[IS]} where a local Dirichet-to-Neumann map yields a
stability result in determining a coefficient in a subdomain.
There are quite a few works on Dirichlet-to-Neumann maps, so our references
are far from being complete: see also  Eskin \cite{[Eskin1]}-\cite{[Eskin2]},  Uhlmann \cite{[Uhlmann]} as related papers.
\smallskip

The main goal of this paper is to study the stability of the inverse problem for the dynamical anisotropic wave equation.
The approach that we develop is a dynamical approach. It is based on the consideration of the wave equation and involves various techniques to study an initial-boundary value problem for the hyperbolic equation. In this paper we prove a
H\"older-type estimate which shows that a dispersion term $q$ and the absorption coefficient $a$ depends stably on the Dirichlet-to-Neumann map. Our approach here is different from \cite{[Montalo]} in order to prove a stability estimate without a smallness assumpltion for the coefficients. The main idea is to probe the medium by real geometric optics solutions of the wave equation, concentrated along a geodesic line, starting
on one side of the boundary, and measure responses of the medium on other side of the boundary and using directely a stability estimate for the geodesic $X$-ray tranform without passing by the normal operator as in \cite{[Montalo]}.
\smallskip

The outline of the paper is as follows. In section 2 we give an important stability estimate for the geodiscal ray transfom. In section 3 we construct special geometrical optics solutions to wave equations with potential and absorption coefficients. In section 4 and 5, we establish stability estimates for the absorption coefficient and the electric potential. The appendix A is devoted to the study of the Poisson kernel in the tangent sphere bundle. 
\section{Stability estimate for the geodesical $X$-ray transform}
\subsection{Geodesical ray transform on a simple manifold}
\setcounter{equation}{0}
The geodesic $X$-ray transform of a function is defined by integrating over geodesics.
It is naturally arises in linearization of the problem of determining a coefficient in partial differential equation. The $X$-ray transform also
arises in Computer Tomography, Positron Emission Tomography, geophysical imaging
in determining the inner structure of the Earth, ultrasound imaging. Uniqueness
result and stability estimates of the geodesic $X$-ray transform were obtained by Mukhometov \cite{[Mukh]} for simple surface. For simple manifolds of any dimension this
result was proven in \cite{[SU2]}, \cite{[SU3]}, see also V. A. Sharafutdinov's book \cite{[Sh]}. In his paper Dairbekov generalized this result for nontrapping manifolds without conjugate points \cite{[Dai]}. Fredholm type inversion formulas were given in \cite{[PU]} by Pestov and Uhlmann.
\medskip

In this section we first collect some  formulas needed in the rest of this paper and introduce the geodesical $X$-ray transform on the manifolds we will be using. Let $(\M,\g)$ be a Riemannian manifold, for $x\in \M$ and $\xi\in T_x\M$ we let  $\gamma_{x,\xi}$ denote the unique geodesic starting at the point $x$ in the direction $\xi$.  By
\begin{align*}
S\M=\set{(x,\xi)\in T\M;\,\abs{\xi}=1}, \quad
S^*\M=\set{(x,p)\in T^*\M;\,\abs{p}=1},
\end{align*}
we denote the sphere bundle and co-sphere bundle of $\M$. The exponential map $\exp_x:T_x\M\to \M$ is given by
\begin{equation}\label{2.1}
\exp_x(v)=\gamma_{x,\xi}(\abs{v}),\quad \xi=\frac{v\,\,}{\abs{v}}.
\end{equation}
A compact Riemannian manifold $(\M,\, \g)$ with boundary is called a convex non-trapping
manifold, if it satisfies two conditions:
\begin{enumerate}
    \item[(i)] the boundary $\p \M$ is strictly convex, i.e., the second fundamental form of the boundary is positive definite at every
    boundary point,
    \item[(ii)] all geodesics having finite length in $\M$, i.e., for each $(x,\xi) \in S\M$, the maximal
    geodesic $\gamma_{x,\xi}(t)$ satisfying the initial conditions $\gamma_{x,\xi}(0) = x$ and $\dot{\gamma}_{x,\xi}(0) = \xi$ is defined on
    a finite segment with extremities $\ell_{-}(x,\xi)$ and $\ell_{+}(x,\xi)$. We recall that a geodesic $\gamma: [a, b] \To M$ is maximal
    if it cannot be extended to a segment $[a-\varepsilon_1, b+\varepsilon_2]$, where $\varepsilon_i \geq 0$ and $\varepsilon_1 + \varepsilon_2 > 0$.
\end{enumerate}
An important subclass of convex non-trapping manifolds are simple manifolds. We say that a compact Riemannian
manifold $(\M, \g)$ is simple if it satisfies the following properties
\begin{enumerate}
     \item[(a)] $(\M,\g)$ is convex and non-trapping, 
     \item[(b)] there are no conjugate points on any geodesic.
\end{enumerate}
A simple $n$-dimensional Riemannian manifold is diffeomorphic to a closed ball in $\R^n$, and any pair of
points in the manifold are joined by an unique geodesic.
\medskip

Let $(x,\xi)\in S\M$, there exist a unique geodesic $\gamma_{x,\xi}$ associated to $(x,\xi)$ which is maxmimally defined on a finite intervall $[\ell_-(x,\xi),\ell_+(x,\xi)]$, with $\gamma_{x,\xi}(\ell_\pm(x,\xi))\in\p\M$. We define the geodesic flow $\phi_t$ as following
\begin{equation}\label{2.2}
\phi_t:S\M\to S\M,\quad \phi_t(x,\xi)=(\gamma_{x,\xi}(t),\dot{\gamma}_{x,\xi}(t)),\quad t\in [\ell_-(x,\xi),\ell_+(x,\xi)],
\end{equation}
and $\phi_t$ is a flow, that is, $\phi_t\circ\phi_s=\phi_{t+s}$.
\smallskip

Now, we introduce the submanifolds of inner and outer vectors of $S\M$
\begin{equation}\label{2.3}
\p_{\pm}S\M =\set{(x,\xi)\in S\M,\, x \in \p \M,\, \pm\seq{\xi,\nu(x)}< 0},
\end{equation}
where $\nu$ is the unit outer normal to the boundary. Note that the manifolds $\p_+ S\M$ and $\p_-S\M$ have the same boundary $S(\p \M)$, and $\p S\M = \p_+ S\M\cup \p_- S\M$. We denote by  $\mathcal{C}^\infty(\p_+ S\M)$ be space of smooth functions on the manifold $\p_+S\M$. Thus we can define two functions $\ell_\pm:S\M\to\R$ which satisfy
$$
\ell_-(x,\xi)\leq 0,\quad \ell_+(x,\xi)\geq 0,\quad \ell_+(x,\xi)=-\ell_-(x,-\xi),
$$
$$
\ell_-(x,\xi)=0,\quad (x,\xi)\in\p_+S\M,\quad \ell_+(x,\xi)=0,\quad (x,\xi)\in\p_-S\M,
$$
$$
\ell_-(\phi_t(x,\xi))=\ell_-(x,\xi)-t,\quad \ell_+(\phi_t(x,\xi))=\ell_+(x,\xi)-t.
$$
For $(x,\xi)\in\p_+ S\M$,  we denote by $\gamma_{x,\xi} : [0,\ell_+(x,\xi)] \to \M$ the maximal
geodesic satisfying the initial conditions $\gamma_{x,\xi}(0) = x$ and $\dot{\gamma}_{x,\xi}(0) = \xi$.
\smallskip

Concerning smoothness properties of  $\ell_{\pm}(x,\xi)$, we can see that these functions are smooth near a point $(x,\xi)$ such that the geodesic $\gamma_{x,\xi}(t)$ intersects $\p\M$ transversely for $t=\ell_{\pm}(x,\xi)$. By strict convexity of $\p\M$, the functions $\ell_{\pm}(x,\xi)$ are smooth on $T\M \setminus T(\p\M).$ In fact, all points of $T\M \cap T(\p\M)$ are singular for $\ell_{\pm}$; since some derivatives of these functions are unbounded in a neighbourhood of such points. In particular, $\ell_{+}$ is smooth on $\p_+SM$, see Lemma 4.1.1 of \cite{[Sh]}.
\smallskip

The Riemannian scalar product on $T_x\M$ induces the volume form on $S_x\M$,
denoted by $\dd \omega_x(\xi)$ and given by
$$
\dd \omega_x(\xi)=\sqrt{\abs{\g}} \, \sum_{k=1}^n(-1)^k\xi^k \dd \xi^1\wedge\cdots\wedge \widehat{\dd \xi^k}\wedge\cdots\wedge \dd \xi^n.
$$
As usual, the notation $\, \widehat{\cdot} \,$ means that the corresponding factor has been dropped.
We introduce the volume form $\dvv$ on the manifold $S\M$ by
$$
\dvv (x,\xi)=\dd\omega_x(\xi)\wedge \dv,
$$
where $\dv$ is the Riemannnian volume form on $\M$. By Liouville's theorem, the form $\dvv$ is preserved by the geodesic flow. The
corresponding volume form on the boundary $\p S\M =\set{(x,\xi)\in S\M,\, x\in\p \M}$ is given
by
$$
\dss=\dd\omega_x(\xi) \wedge \ds,
$$
where $\ds$ is the volume form of $\p \M$.
\smallskip

Let $L^2_\mu(\p_+S\M)$ be the space of square integrable functions with respect to the measure $\mu(x,\xi)\dss$ with
$\mu(x,\xi)=\abs{\seq{\xi,\nu(x)}}$. This Hilbert space is endowed with the scalar product
\begin{equation}\label{2.4}
\para{u,v}_\mu=\int_{\p_+S\M}u(x,\xi) \overline{v}(x,\xi) \mu(x,\xi)\dss.
\end{equation}
The ray transform (also called geodesic $X$-ray transform) on a convex non trapping manifold $\M$ is the linear operator
\begin{equation}\label{2.5}
\I:\mathcal{C}^\infty(\M)\To \mathcal{C}^\infty(\p_+S\M),
\end{equation}
defined by the equality
\begin{equation}\label{2.6}
\I f(x,\xi)=\int_0^{\ell_+(x,\xi)}f(\gamma_{x,\xi}(t))\, \dd t,\quad (x,\xi)\in\p_+S\M.
\end{equation}
The right-hand side of (\ref{2.6}) is a smooth function on $\p_+S\M$ because the integration limit $\ell_+(x,\xi)$ is a smooth function on $\p_+S\M$.
The ray transform on a convex non trapping manifold $\M$ can be extended as a bounded operator
\begin{equation}\label{2.7}
\I:H^k(\M)\To H^k(\p_+S\M),
\end{equation}
for every integer $k\geq 1$, see Theorem 4.2.1 of \cite{[Sh]}.
\medskip

\subsection{Inverse inequality for the geodesical ray-transform}
This subsection concerns the problem of inverting the ray transform.
\smallskip

Let $R$ be the curvature tensor of the Levi-Civita connection $\nabla_X$ defined by
$$
R(X,Y)Z=\nabla_X\nabla_Y Z+\nabla_Y\nabla_X Z-\nabla_{[X,Y]} Z, \quad X,Y,Z\in T\M.
$$
For a point $x \in \M$ and a two-dimensional subspace $\pi \subset T_x\M,$ the number
$$
K(x,\pi)=\frac{\langle R(\xi,\eta)\eta,\xi\rangle}{\vert \xi \vert^2 \vert \eta \vert^2-\langle \xi, \eta \rangle^2}, 
$$ 
is independent of the choice of the basis $\xi$, $\eta$ for $\pi$. It is called the sectional curvature of the manifold $\M$ at the point $x$ and in the two-dimentional direction $\pi$.\\
For $(x,\xi)\in T\M$, we set $K(x,\xi)=\underset{\pi \ni \xi}{\sup} K(x,\pi) $ and
\begin{equation}\label{2.8}
 K^+(x,\xi)=\max\{0,K(x,\xi)\}. 
\end{equation}
 For a simple compact Riemannian manifold $(\M,\g)$, we set 
 \begin{equation}\label{2.9}
 k^+(\M,\g)=\sup\lbrace \int_0^{\ell_+(x,\xi)} tK^+(\gamma_{x,\xi}(t),\dot{\gamma}_{x,\xi}(t)) dt, \; (x,\xi)\in \p_+S\M\rbrace.
 \end{equation}
The aim of this section is to prove the following theorem.
\begin{theorem}\label{theorem}
For every simple compact Riemannian manifold $(\M,\g)$ with $k^+(\M,\g)<1$, there exist $C>0$ such that the following stability estimate 
\begin{equation}\label{2.10}
 \Vert f \Vert _{L^2(\M)}\leq C \Vert \I f \Vert_{H^1(\p_+S\M)}, 
\end{equation} 
holds for any $f\in H^1(\M)$.
\end{theorem}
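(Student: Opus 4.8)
The plan is to prove the stability estimate \eqref{2.10} via the standard energy-identity (Pestov identity) approach for the geodesic $X$-ray transform on simple manifolds, following the lines of Mukhometov and Sharafutdinov's book \cite{[Sh]}, but keeping careful track of the curvature term so that the hypothesis $k^+(\M,\g)<1$ enters quantitatively. First I would reduce the problem to a boundary-value problem for a transport equation: given $f\in H^1(\M)$, let $u(x,\xi)$ be the function on $S\M$ defined by integrating $f$ along the geodesic issued backwards from $(x,\xi)$, i.e. the solution of the transport equation $Hu = -f$ (where $H$ is the generator of the geodesic flow, the vector field $\dot\gamma\cdot\partial_x$ on $S\M$) with $u|_{\p_-S\M}=0$; then $u|_{\p_+S\M} = \I f$, and more generally $u(x,\xi) = \int_0^{\ell_+(x,\xi)} f(\gamma_{x,\xi}(t))\,dt$ is a well-defined $H^1$ function on $S\M$ by the mapping properties \eqref{2.7}. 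One controls $\|u\|$ on $\p_+S\M$ and at interior points in terms of $\|\I f\|_{H^1(\p_+S\M)}$ by propagating the boundary data along the flow and using that $\ell_\pm$ are bounded.

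The core of the argument is the Pestov (energy) identity applied to $u$. One computes $\int_{S\M} \abs{\nabla_\xi H u}^2 \dvv$ and integrates by parts twice, using the commutator formula $[H,\nabla_\xi] = \nabla_x$ on $S\M$ together with the structure equations of the sphere bundle; the curvature tensor appears precisely through the Jacobi equation. This yields an identity of the schematic form
\begin{equation}\label{pestov-plan}
\int_{S\M}\abs{\nabla_\xi H u}^2\dvv = \int_{S\M}\abs{H\nabla_\xi u}^2\dvv - \int_{S\M}\seq{R(\xi,\nabla_\xi u)\nabla_\xi u,\xi}\dvv + (\textrm{boundary terms}),
\end{equation}
where the boundary terms live on $\p S\M$ and are controlled by the $H^1(\p_+S\M)$-norm of $\I f$ (using that $u|_{\p_-S\M}=0$ and the smoothness of $\ell_+$ on $\p_+S\M$ from Lemma 4.1.1 of \cite{[Sh]}). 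Since $Hu=-f$ depends only on $x$, the left-hand side of \eqref{pestov-plan} is $\int_{S\M}\abs{\nabla_\xi f}^2\dvv = 0$ — wait, more precisely $\nabla_\xi(Hu)=\nabla_\xi(-f)=0$ because $f$ is a function on $\M$ pulled back to $S\M$, so the left side vanishes. Hence the first term on the right, which after integrating over the fibers produces a positive multiple of $\|f\|_{L^2(\M)}^2$ plus a term $\|\nabla u\|^2$-type, is estimated from above by the curvature term plus boundary contributions.

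The decisive step — and the main obstacle — is absorbing the curvature term. Here one does not estimate $K^+$ pointwise (which would force $K^+<0$, i.e. nonpositive curvature); instead one integrates the Jacobi-type inequality along each geodesic and uses an index-form / Santaló-type argument: writing the fiber integral as an integral over $\p_+S\M$ of integrals along geodesics via Santaló's formula, one bounds $\int_0^{\ell_+} t K^+(\gamma(t),\dot\gamma(t))\,dt$ by $k^+(\M,\g)$ uniformly, and the condition $k^+(\M,\g)<1$ makes the resulting coefficient $1-k^+(\M,\g)$ strictly positive, so the curvature term can be moved to the left-hand side. This is exactly the mechanism by which $k^+<1$ replaces the more restrictive assumption of nonpositive sectional curvature. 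After this absorption one obtains $\|f\|_{L^2(\M)}^2 \leq C\,(1-k^+(\M,\g))^{-1}\,\|\I f\|_{H^1(\p_+S\M)}^2$, which is \eqref{2.10}. I would organize the write-up as: (1) set up the transport equation and the semibasic calculus on $S\M$, recalling $[H,\nabla_\xi]=\nabla_x$ and the structure of $\dvv$; (2) derive the Pestov identity with explicit boundary terms; (3) estimate the boundary terms by $\|\I f\|_{H^1(\p_+S\M)}$; (4) perform the Santaló-formula reduction and the curvature absorption using $k^+(\M,\g)<1$; (5) conclude. The trickiest bookkeeping will be in steps (2)–(3), making sure the boundary terms that arise (which involve tangential derivatives of $u$ on $\p_+S\M$, hence of $\I f$) are genuinely controlled by the $H^1$ norm and that no uncontrolled normal derivatives at the glancing set $T(\p\M)\cap T\M$ appear — this is why strict convexity of $\p\M$ is used.
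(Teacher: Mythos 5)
Your proposal follows essentially the same route as the paper's proof: the transport equation $Hu=-f$ with $u|_{\p_-S\M}=0$, the Pestov identity whose left side vanishes because $\overset{\vv}{\nabla}(Hu)=-\overset{\vv}{\nabla}f=0$, control of the boundary term by the tangential derivatives of $u|_{\p_+S\M}=\I f$ (hence by $\Vert \I f\Vert_{H^1(\p_+S\M)}$, with the glancing-set issue handled by exhausting $\M$ by the strictly convex $\M_\rho$), and absorption of the curvature term via the integrated-along-geodesics inequality $\int_{S\M}K^+|\overset{\vv}{\nabla}u|^2\dvv\leq k^+\int_{S\M}|H\overset{\vv}{\nabla}u|^2\dvv$ together with $H\overset{\vv}{\nabla}u=-\overset{\hh}{\nabla}u$ and $k^+(\M,\g)<1$. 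The plan is correct and matches the paper's argument in all essential respects.
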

By density arguments, it suffices to prove the theorem for $f \in \mathcal{C}^{\infty}(\M)$. Indeed, if $f\in H^1(M)$, then we can find  a sequence $(f_k)_k$ in $\mathcal{C}^{\infty}(\M)$ converging towards $f$ in $H^1(\M).$
 The ray transform $\I$ is a bounded operator from $H^1(\M)$ into $H^1(\p_+S\M)$, so the sequence $(\I f_k)_k$ converges towards $\I f$ in $H^1(\p_+S\M).$ Applying the theorem for $f_k$ and passing to the limit as $k \rightarrow +\infty,$ we deduce that $ \Vert f \Vert _{L^2(\M)}\leq C \Vert \I f \Vert_{H^1(\p_+S\M)}$.
  \smallskip
  
   Before starting the proof of the theorem, we need to introduce some notions and notations. We will use the Einstein summation convention to abbreviate the notations. When an index variable appears twice in a simple term and is not otherwise defined, it implies summation of that term over all the values of the index. For example, for $i\in{1,2,3}$, $c_ix^i$ means $c_1x^1+c_2x^2+c_3x^3$.
\smallskip

Let $\tau_s^r\M$ be the bundle of tensors of degree $(r,s)$ on $\M$. Its sections denoted by $T_s^r$ are called \textit{tensor fields of degree $(r,s)$.} Let $U$ be a domain of $\M
$.
 We denote by $\mathcal{C}^{\infty}(\tau_s^r\M,U)$ the $\mathcal{C}^{\infty}(U)$-module of smooth sections of the bundle $\tau_s^r \M$ over $U$. The notation $\mathcal{C}^{\infty}(\tau_s^r\M,\M)$ will usually be abbreviated to $\mathcal{C}^{\infty}(\tau_s^r\M).$
  If $(x^1,\dots,x^n)$ is a local coordinate system defined in a domain $U$, then any tensor field $u\in \mathcal{C}^{\infty}(\tau_s^r \M,U)$ can be uniquely represented as
\begin{equation}\label{2.11}
u=u_{j_1,\dots,j_s}^{i_1,\dots,i_r} \frac{\p}{\p x^{i_1}}\otimes\dots\otimes\frac{\p}{\p x^{i_r}}\otimes dx^{j_1} \otimes \dots \otimes dx^{j_s},
\end{equation}
where $u_{j_1,\cdots,j_s}^{i_1,\cdots,i_r} \in \mathcal{C}^{\infty}(U)$ are called the coordinates of the field $u$ in the given coordinate system. We will usually abbreviate (\ref{2.11}) as follows
\begin{equation}\label{2.12}
 u=(u_{j_1,\cdots,j_s}^{i_1,\cdots,i_r}).
\end{equation}
The bundles $\tau_s^r \M$ and $\tau_r^s \M$ are dual to each other and, consequently, $\mathcal{C}^{\infty}(\tau_s^r \M)$ and $\mathcal{C}^{\infty}(\tau_r^s \M)$ are mutually dual $\mathcal{C}^{\infty}(\M)$-modules. This implies in particular that a field $u\in \mathcal{C}^{\infty}(\tau_s^1 \M)$ can be considered as a $\mathcal{C}^{\infty}(\M)$-multilinear mapping $u:\mathcal{C}^{\infty}(\tau_0^1 \M)\times\dots\times\mathcal{C}^{\infty}(\tau_0^1 \M)\longrightarrow \mathcal{C}^{\infty}(\tau_0^1 \M).$ Consequently, a given connection $\nabla$ on $\M$ defines the $\C$-linear mapping (denoted by the same letter)
\begin{equation}\label{2.13}
\nabla:\mathcal{C}^{\infty}(\tau_0^1 \M)\longrightarrow \mathcal{C}^{\infty}(\tau_1^1 \M)
\end{equation}  by the formula $(\nabla v)(u)=\nabla_uv,$ 
The tensor field $\nabla v$ is called the \textit{covariant derivative} of the vector field $v$ (with respect to the given connection). The covariant differenciation defined on vector fields can be transferred to tensor fields ( see \cite{[Sh]} Theorem 3.2.1 pp. 85)
\begin{equation}\label{2.14}
\nabla:\mathcal{C}^{\infty}(\tau_s^r \M)\longrightarrow \mathcal{C}^{\infty}(\tau_{s+1}^r \M)
\end{equation}
such that (\ref{2.14}) coincides with the mapping (\ref{2.13}), for $r=1$ and $s=0$ and that for a field $$u=u_{j_1,\dots,j_s}^{i_1,\dots,i_r} \frac{\p}{\p x^{i_1}}\otimes\dots\otimes\frac{\p}{\p x^{i_r}}\otimes dx^{j_1} \otimes \dots \otimes dx^{j_s},$$ the field $\nabla u$ is defined by $$\nabla u=\nabla_ku_{j_1,\dots,j_s}^{i_1,\dots,i_r}  \frac{\p}{\p x^{i_1}}\otimes\dots\otimes\frac{\p}{\p x^{i_r}}\otimes dx^{j_1} \otimes \dots \otimes dx^{j_s}\otimes dx^k,$$ where 
\begin{equation}\label{2.15}
\nabla_ku_{j_1\dots j_s}^{i_1\dots i_r} =\frac{\p}{\p x^k}u_{j_1\dots j_s}^{i_1 \dots i_r}+\sum_{m=1}^r\Gamma^{i_m}_{kp}u_{j_1 \dots j_s}^{i_1 \dots i_{m-1} p i_{m+1} \dots i_r}-\sum_{m=1}^s\Gamma^{p}_{kj_m}u_{j_1 \dots j_{m-1} p j_{m+1} \dots j_s}^{i_1 \dots  i_r},
\end{equation}
where $\Gamma_{kq}^p$ is the Christoffel symbol.

Now, we will extend this covariant differenciation for tensors on $T\M$. If $(x^1,\dots, x^n)$ is a local coordinate system  defined in a domain $U \subset \M$, then we denote by $\frac{\p}{\p x^i}$ the coordinate vector fields and by $dx^i$ the coordinate covector fields. We recall that the coordinates of a vector $\xi \in T_x\M$ are the coefficients of the expansion $\xi=\xi^i \frac{\p}{\p x^i}$. Let $p$ be the projection on $\M$. On the domain $p^{-1}(U)\subset T\M$, the family of the functions $(x^1, \dots,x^n,\xi^1,\dots,\xi^n)$ is a local coordinate system which is called \textit{associated} with the system $(x^1,\dots,x^n).$ A local coordinate system  on $T\M$ will be called a \textit{natural coordinate system} if it is associated with some local coordinate system on $\M$. In the sequel, we will use only such coordinate systems on $T\M$. The algebra of tensor fields of the manifold $T\M$ is generated locally by the coordinate fields $\frac{\p}{\p x^i},\frac{\p}{\p \xi^i}, dx^i, d\xi^i.$ A tensor $u \in T^r_{s,(x,\xi)}(T\M)$ of degree $(r,s)$ at a point $(x,\xi)\in T\M$ is called \textit{semibasic} if in some (and so, in any) natural coordinate system, it can be represented as:
\begin{equation}\label{2.16}
u=u_{j_1 \dots j_s}^{i_1 \dots i_r} \frac{\p}{\p \xi^{i_1}}\otimes\dots\otimes\frac{\p}{\p \xi^{i_r}}\otimes dx^{j_1} \otimes \dots \otimes dx^{j_s}.
\end{equation}
We will abbreviate this equality to $$u=(u_{j_1 \dots j_s}^{i_1 \dots i_r} ).$$ We denote by $\beta_s^r\M$ the subbundle in $\tau_s^r(T\M)$ made of all semibasic tensors of degree $(r,s).$ Note that $\mathcal{C}^{\infty}(\beta_0^0 \M)=\mathcal{C}^{\infty}(T \M)$. The elements of $\mathcal{C}^{\infty}(\beta_0^1 \M)$ are called the \textit{semibasic vector fields}, and the elements of $\mathcal{C}^{\infty}(\beta_1^0 \M)$ are called \textit{semibasic covector fields}.
Tensor fields on $M$ can be identified with the semibasic tensor fields on $T\M$ whose components are independent of the second argument $\xi$. Thus we obtain the canonical imbedding
\begin{equation}\label{2.17}
\iota:\mathcal{C}^{\infty}(\tau_s^r \M) \subset \mathcal{C}^{\infty}(\beta_s^r \M).
\end{equation}
Note that $\iota( \frac{\p}{\p x^{i}})= \frac{\p}{\p \xi^{i}}$ and $\iota(dx^i)=dx^i.$
\smallskip

For $u\in \mathcal{C}^{\infty}(\beta_s^r \M)$, we define two semibasic tensor fields $\overset{\vv}{\nabla}u$ and $\overset{\hh}{\nabla}u$ by the formulas 
\begin{equation}\label{2.18}
\overset{\vv}{\nabla}u =\overset{\vv}{\nabla}_k u_{j_1 \dots j_s}^{i_1 \dots i_r} \frac{\p}{\p \xi^{i_1}}\otimes\dots\otimes\frac{\p}{\p \xi^{i_r}}\otimes dx^{j_1} \otimes \dots \otimes dx^{j_s}\otimes dx^k,
\end{equation}
where
\begin{equation}\label{2.19}
\overset{\vv}{\nabla}_k u_{j_1 \dots j_s}^{i_1 \dots i_r}=\frac{\p }{\p \xi^k} u_{j_1 \dots j_s}^{i_1 \dots i_r},
\end{equation}
and 
\begin{equation}\label{2.20}
\overset{\hh}{\nabla}u =\overset{\hh}{\nabla}_k u_{j_1 \dots j_s}^{i_1 \dots i_r} \frac{\p}{\p \xi^{i_1}}\otimes\dots\otimes\frac{\p}{\p \xi^{i_r}}\otimes dx^{j_1} \otimes \dots \otimes dx^{j_s}\otimes dx^k,
\end{equation}
where
\begin{multline}\label{2.21}
\overset{\hh}{\nabla}_ku_{j_1\dots j_s}^{i_1\dots i_r} =\frac{\p}{\p x^k}u_{j_1\dots j_s}^{i_1 \dots i_r}-\Gamma^{p}_{kq}\xi^q \frac{\p}{\p \xi^{p}}u_{j_1\dots j_s}^{i_1\dots i_r}\cr
+\sum_{m=1}^r\Gamma^{i_m}_{kp}u_{j_1 \dots j_s}^{i_1 \dots i_{m-1} p i_{m+1} \dots i_r}-\sum_{m=1}^s\Gamma^{p}_{kj_m}u_{j_1 \dots j_{m-1} p j_{m+1} \dots j_s}^{i_1 \dots  i_r}.
\end{multline}
\smallskip

We thus obtain two well-defined differential operators $\overset{\vv}{\nabla}, \overset{\hh}{\nabla}:\mathcal{C}^{\infty}(\beta_s^r \M)\longrightarrow \mathcal{C}^{\infty}(\beta_{s+1}^r \M)$ that are respectively called the \textit{vertical} and \textit{horizontal} covariant derivatives.\\
For $u\in\mathcal{C}^\infty(T\M)$ the covariant field $\overset{\hh}{\nabla} u\in\mathcal{C}^\infty(T^*\M)$ given in a coordinates system by
\begin{equation}\label{2.22}
\overset{\hh}{\nabla} u=(\overset{\hh}{\nabla}_k u)dx^k,\quad \overset{\hh}{\nabla}_k u=\frac{\p u}{\p x^k}-\Gamma_{kq}^p\xi^q\frac{\p u}{\p\xi^p}.
\end{equation}
The vertical covariant derivative of $u$ is given by
\begin{equation}\label{2.23}
\overset{\vv}{\nabla}u=(\overset{\vv}{\nabla}_ku)dx^k,\quad \overset{\vv}{\nabla}_ku=\frac{\p u}{\p\xi^k}.
\end{equation}
We can show that these derivatives satisfy the following commutation formulas (for more details, see \cite{[Sh]}, pp. 95).
\begin{equation}\label{2.24}
\overset{\vv}{\nabla}_k \overset{\hh}{\nabla}_l=\overset{\hh}{\nabla}_l\overset{\vv}{\nabla}_k.
\end{equation} 
We can also prove the following relations
\begin{equation}\label{2.25}
\overset{\hh}{\nabla}_k\xi^i=0,\quad \overset{\vv}{\nabla}_k\xi^i=\delta_k^i.
\end{equation}
As can be easily shown, $\overset{\vv}{\nabla}$ and $\overset{\hh}{\nabla}$ are well-defined first-order differential operators. In particular, they extend naturally to the Sobolev space $H^1(\beta^r_s\M)$.

The vertical divergence, $\overset{\vv}{\dive}$, and the horizontal divergence, $\overset{\hh}{\dive}$, of a semibasic vector field $V$ are defined by
\begin{equation}\label{2.26}
 \overset{\vv}{\dive}(V)= \overset{\vv}{\nabla}_k v^k,\quad  \overset{\hh}{\dive}(V)= \overset{\hh}{\nabla}_k v^k.
\end{equation}
\smallskip

To prove Theorem \ref{theorem}, we also need the two following divergence formulas   (see \cite{[Sh]}, p 101).\\
\textit{Gauss-Ostrogradskii formula} of the vertical divergence
\begin{equation}\label{2.27}
\int_{S\M} \overset{\vv}{\dive}(W)\, \dvv=(n-2) \int_{S\M} \langle W,\xi\rangle \, \dvv,\quad W\in \mathcal{C}^\infty(T\M),
\end{equation}
\textit{Gauss-Ostrogradskii formula} of the horizontal divergence
\begin{equation}\label{2.28}
\int_{S\M} \overset{\hh}{\dive}(V)\, \dvv= \int_{\p S\M} \langle V,\nu\rangle \, \dss, \quad V\in \mathcal{C}^\infty(T\M),
\end{equation}
Let $H$ denote the vector field associated with the geodesic flow $\phi_t$. For $u\in \mathcal{C}^\infty(S\M)$ and $(x,\xi)\in S\M$, we have
\begin{equation}\label{2.29}
Hu(x,\xi)=\frac{d}{dt}u(\phi_t(x,\xi))_{|t=0}.
\end{equation}    
and we call it the differentiation along the geodesics. In coordinate form, we have 
\begin{equation}\label{2.30}
H=\xi^i\frac{\p}{\p x^i}-\Gamma^i_{jk}\xi^j\xi^k\frac{\p}{\p \xi^i}=\xi^i (\frac{\p}{\p x^i}-\Gamma^p_{iq}\xi^q \frac{\p}{\p \xi^p})=\xi^i\overset{\hh}{\nabla}_i.
\end{equation}
Now we consider the \textit{Pestov identity}, which is the basic energy identity that has been used since the work of Mukhometov \cite{[Mukh]} in most injectivity proofs of ray transforms in absence of real-analyticity or special symmetries. For a function $u \in \mathcal{C}^{\infty}(T \M)$, we have
\begin{equation}\label{2.31}
2\langle  \overset{\hh}{\nabla} u,  \overset{\vv}{\nabla}Hu \rangle=\vert  \overset{\hh}{\nabla} u \vert^2+  \overset{\hh}{\dive}(V) + \overset{\vv}{\dive}(W)- \langle R(\xi,\overset{\vv}{\nabla} u)\xi,  \overset{\vv}{\nabla} u\rangle.
\end{equation}
Here $H$ is the geodesic vector field associated with the geodesic flow given by (\ref{2.29}), $R$ is the curvature tensor, and the semibasic vector $V$ and $W$ are given by
 \begin{equation}\label{2.32}
  V = \langle \overset{\hh}{\nabla} u, \overset{\vv}{\nabla} u\rangle\xi-\langle\xi, \overset{\hh}{\nabla} u\rangle  \overset{\vv}{\nabla} u,
\end{equation}   
 \begin{equation}\label{2.33}
 W=\langle\xi,\overset{\hh}{\nabla} u\rangle  \overset{\hh}{\nabla} u.
 \end{equation}
 We introduce the function $u:S\M\to\R$ defined by 
   \begin{equation}\label{2.34}
    u(x,\xi)=\int_0^{\ell_+(x,\xi)}f(\gamma_{x,\xi}(t)) dt.
   \end{equation}
  It satisfies the boundary conditions 
  \begin{equation}\label{2.35}
  u=\I f,\quad\mbox{on }\p_+ S\M,
  \end{equation}
    and, since $\ell_+(x,\xi)=0$ for $(x,\xi)\in \p_- S\M$, we have
    \begin{equation}\label{2.36}
     u=0,\quad\mbox{on }\p_- S\M.
\end{equation}
\begin{lemma}\label{L.2.2}
The function $u$ given by (\ref{2.34}) is smooth on $T\M\backslash T(\p\M)$ and has the following properties:
\begin{enumerate}
\item  $u$ is homogeneous function of degree $-1$  in $\xi$.
\item  $u$ satisfies the following kinetic equation $Hu=-f$.
\item $u$ satisfies the following equation $H\overset{\vv}{\nabla} u=-\overset{\hh}{\nabla} u$.
\end{enumerate}
\end{lemma}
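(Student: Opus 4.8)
The plan is to verify the three properties directly from the definition \eqref{2.34}, using the reparametrization behaviour of geodesics and the coordinate formulas for $\overset{\hh}{\nabla}$, $\overset{\vv}{\nabla}$ and $H$ collected above. First I would prove the homogeneity. For $\lambda>0$ the geodesic with initial data $(x,\lambda\xi)$ satisfies $\gamma_{x,\lambda\xi}(t)=\gamma_{x,\xi}(\lambda t)$, and correspondingly $\ell_+(x,\lambda\xi)=\lambda^{-1}\ell_+(x,\xi)$ (the exit time scales inversely with the speed). Substituting into \eqref{2.34} and changing variables $s=\lambda t$ gives $u(x,\lambda\xi)=\lambda^{-1}u(x,\xi)$, so $u$ is positively homogeneous of degree $-1$ in $\xi$; smoothness on $T\M\setminus T(\p\M)$ follows from the smoothness of $\ell_+$ there (quoted after \eqref{2.7}, via Lemma 4.1.1 of \cite{[Sh]}) together with smooth dependence of geodesics on initial conditions.

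Next I would establish the kinetic equation $Hu=-f$. By \eqref{2.29}, $Hu(x,\xi)=\frac{d}{dt}u(\phi_t(x,\xi))\big|_{t=0}$. Using the flow identity $\gamma_{\phi_t(x,\xi)}(s)=\gamma_{x,\xi}(t+s)$ and the cocycle relation $\ell_+(\phi_t(x,\xi))=\ell_+(x,\xi)-t$ listed before \eqref{2.34}, one has
\[
u(\phi_t(x,\xi))=\int_0^{\ell_+(x,\xi)-t}f(\gamma_{x,\xi}(t+s))\,\dd s=\int_t^{\ell_+(x,\xi)}f(\gamma_{x,\xi}(r))\,\dd r.
\]
Differentiating in $t$ at $t=0$ by the fundamental theorem of calculus yields $Hu(x,\xi)=-f(\gamma_{x,\xi}(0))=-f(x)$, which is (2). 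Note this is exactly the transport equation whose solution operator along geodesics recovers $u$ from $f$.

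Finally, for (3) I would apply the vertical derivative $\overset{\vv}{\nabla}$ to the identity $Hu=-f$ and commute it past $H$. Since $f$ is a function on $\M$ (independent of $\xi$), $\overset{\vv}{\nabla}(f\circ p)=0$, so $\overset{\vv}{\nabla}(Hu)=0$. Writing $H=\xi^i\overset{\hh}{\nabla}_i$ from \eqref{2.30} and using the commutation relation \eqref{2.24}, $\overset{\vv}{\nabla}_k\overset{\hh}{\nabla}_l=\overset{\hh}{\nabla}_l\overset{\vv}{\nabla}_k$, together with $\overset{\vv}{\nabla}_k\xi^i=\delta_k^i$ from \eqref{2.25}, we compute
\[
0=\overset{\vv}{\nabla}_k(Hu)=\overset{\vv}{\nabla}_k(\xi^i\overset{\hh}{\nabla}_i u)=\overset{\hh}{\nabla}_k u+\xi^i\overset{\hh}{\nabla}_i\overset{\vv}{\nabla}_k u=\overset{\hh}{\nabla}_k u+H(\overset{\vv}{\nabla}_k u),
\]
i.e. $H\overset{\vv}{\nabla} u=-\overset{\hh}{\nabla} u$, which is the claimed identity. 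The main subtlety to be careful about throughout is that $u$ is only smooth on $T\M\setminus T(\p\M)$, so all these identities are to be read pointwise on that open set (equivalently on the interior of $S\M$ after restriction and extension by homogeneity); the singular behaviour of $\ell_\pm$ on $T(\p\M)$ is what one must keep track of when these identities are later fed into the Pestov identity and integrated. I expect the identification of the correct homogeneity and the bookkeeping of the exit-time cocycle relations to be the only genuinely delicate points; the rest is a routine manipulation of the coordinate formulas.
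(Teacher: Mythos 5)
Your proposal is correct and follows essentially the same route as the paper: homogeneity via the scaling relations $\gamma_{x,\lambda\xi}(t)=\gamma_{x,\xi}(\lambda t)$ and $\ell_+(x,\lambda\xi)=\lambda^{-1}\ell_+(x,\xi)$, the kinetic equation by differentiating $u$ along the flow using $\ell_+(\phi_t(x,\xi))=\ell_+(x,\xi)-t$, and item (3) by applying $\overset{\vv}{\nabla}$ to $Hu=-f$ and using the commutation relations \eqref{2.24}--\eqref{2.25}. The only cosmetic difference is that you invoke the fundamental theorem of calculus directly where the paper writes out the chain rule with the Christoffel symbols; the content is identical.
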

\begin{proof}
\begin{itemize}
Item (1) is immediate from the relations $\ell_+(x,\lambda \xi)=\frac{1}{\lambda}\ell_+(x,\xi)$ and $\gamma_{x,\lambda \xi}(t)=\gamma_{x,\xi}(\lambda t)$ for any $\lambda>0$. Then 
$$
u(x,\lambda \xi)=\int_0^{\frac{1}{\lambda}\ell_+(x,\xi)}f(\gamma_{x,\xi}(\lambda t)) dt = \frac{1}{\lambda} u(x,\xi).
 $$
Prove item (2). For $s \in \R$ sufficiently small, we set  $x_s=\gamma_{x,\xi}(s)$ and $\xi_s=\dot{\gamma}_{x,\xi}(s).$ Then, $\gamma_{x_s,\xi_s}(t)=\gamma_{x,\xi}(t+s)$ and $\ell_+(x_s,\xi_s)=\tau_+(x,\xi)-s.$ So,
$$
 u(\gamma_{x,\xi}(s),\dot{\gamma}_{x,\xi}(s))=u(x_s,\xi_s) = \int_0^{\ell_+(x_s,\xi_s)}f(\gamma_{x,\xi}(t+s)) dt =\int_s^{\ell_+(x,\xi)}f(\gamma_{x,\xi}(t)) dt .
$$
 Differentiating with respect to $s$ and taking $s=0$, we obtain that 
 $$
 \frac{\p u}{\p x_i}\dot{\gamma}_{x,\xi}^i(0)+\frac{\p u}{\p \xi_i}  \ddot {\gamma}_{x,\xi}^{i}(0)=-f(x).
 $$
 Since we have $\gamma_{x,\xi}(0)=x$, $\dot{\gamma}_{x,\xi}(0)=\xi$ and $\ddot{\gamma}_{x,\xi}^{i}(0)=-\Gamma ^i_{jk}(x)\xi^j\xi^k,$ then $$\xi^i\frac{\p u}{\p x_i}-\Gamma^i_{jk}\xi^j\xi^k\frac{\p u}{\p \xi_i}= -f(x).$$ Thus we have $Hu=-f$.\\
To prove item (3), applying the operator $\overset{\vv}{\nabla}$ to the kinetic equation, we obtain $ \overset{\vv}{\nabla} (Hu)=-\overset{\vv}{\nabla} f= 0.$ It follows that 
$$
 0= \overset{\vv}{\nabla} (Hu)  = \overset{\vv}{\nabla}_j (\xi^i  \overset{\hh}{\nabla}_i u) dx^j  =(\overset{\vv}{\nabla}_j \xi^i ) \overset{\hh}{\nabla}_i u dx^j+\xi ^i  (\overset{\vv}{\nabla}_j \overset{\hh}{\nabla}_i u) dx^j.
$$
From (\ref{2.24}) and (\ref{2.25}), we get 
$$
0=\overset{\hh}{\nabla} u+H(\overset{\vv}{\nabla}u).
$$
\end{itemize}
The proof is complete.
\end{proof}
To prove the Theorem \ref{theorem}, we will also need the following lemma (see  \cite{[Sh]}, pp. 124, for the proof).
\begin{lemma}\label{L.2.3}
Let $(\M,\g)$ be a simple Riemannian manifold and $K^+$ given by (\ref{2.8}). Let a semibasic tensor field $u\in\mathcal{C}^\infty(\beta_m^0\M)$ satisfies boundary condition (\ref{2.36}), then the following inequality 
\begin{equation}\label{2.37}
\int_{S\M} K^+(x,\xi) \vert u(x,\xi) \vert ^2 \dvv \leq k^+\int_{S\M} \vert Hu(x,\xi) \vert ^2 \dvv.
\end{equation}
holds true. Here $k^+=k^+(\M,\g)$ is given by (\ref{2.9}).
\end{lemma}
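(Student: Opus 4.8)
The plan is to reduce the inequality to a one-dimensional estimate along individual geodesics and then integrate over the space of geodesics. First I would disintegrate the Liouville measure $\dvv$ over $\p_+S\M$. Since $\dvv$ is invariant under the geodesic flow $\phi_t$ (Liouville's theorem, noted above), the integral-geometric (Santal\'o) formula gives, for any integrable $F$ on $S\M$,
\begin{equation*}
\int_{S\M}F\,\dvv=\int_{\p_+S\M}\left(\int_0^{\ell_+(x,\xi)}F(\phi_t(x,\xi))\,dt\right)\abs{\seq{\xi,\nu(x)}}\,\dss .
\end{equation*}
Applying this to $F=K^+\abs{u}^2$ and to $F=\abs{Hu}^2$ reduces \eqref{2.37} to proving, for each fixed $(x,\xi)\in\p_+S\M$, the scalar inequality
\begin{equation*}
\int_0^{\ell}\kappa(t)\,a(t)^2\,dt\le k^+\int_0^{\ell}\abs{Hu(\phi_t(x,\xi))}^2\,dt,
\end{equation*}
where $\ell=\ell_+(x,\xi)$, $\kappa(t)=K^+(\phi_t(x,\xi))$ and $a(t)=\abs{u(\phi_t(x,\xi))}$, because the common weight $\abs{\seq{\xi,\nu(x)}}\,\dss$ then integrates out identically on both sides.

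Second, I would set up this one-dimensional estimate, whose key inputs are the boundary condition and the metric compatibility of $H$. Because the forward endpoint $\phi_{\ell}(x,\xi)$ lies in $\p_-S\M$, the boundary condition \eqref{2.36} gives $a(\ell)=0$. Since $H=\xi^i\overset{\hh}{\nabla}_i$ is covariant differentiation along the geodesic for the Levi-Civita connection, it is metric compatible, so $\frac{d}{dt}\abs{u(\phi_t)}^2=2\seq{Hu(\phi_t),u(\phi_t)}$ and hence $\abs{a'(t)}\le\abs{Hu(\phi_t(x,\xi))}$ wherever $a\neq 0$ (Kato's inequality); this is exactly what lets me pass from the tensor field to the scalar $a$. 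Writing $a(t)=-\int_t^{\ell}a'(s)\,ds$ and applying Cauchy--Schwarz yields $a(t)^2\le(\ell-t)\int_0^{\ell}\abs{Hu(\phi_s(x,\xi))}^2\,ds$. Multiplying by $\kappa(t)$ and integrating gives
\begin{equation*}
\int_0^{\ell}\kappa(t)\,a(t)^2\,dt\le\left(\int_0^{\ell}(\ell-t)\,\kappa(t)\,dt\right)\int_0^{\ell}\abs{Hu(\phi_s(x,\xi))}^2\,ds .
\end{equation*}

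Third, I would identify the resulting weight with $k^+$. The factor produced by Cauchy--Schwarz is $\int_0^{\ell}(\ell-t)\kappa(t)\,dt$, i.e. the curvature weighted by the distance to the exit point, whereas \eqref{2.9} weights by the distance $t$ from the entry point. These agree after reversing the geodesic: the map $(x,\xi)\mapsto(\gamma_{x,\xi}(\ell),-\dot\gamma_{x,\xi}(\ell))$ is an involution of $\p_+S\M$ under which $t\mapsto\ell-t$, and $K^+(x,\xi)=K^+(x,-\xi)$ because the supremum in \eqref{2.8} runs over all $2$-planes containing $\xi$, which are exactly those containing $-\xi$. Hence $\int_0^{\ell}(\ell-t)\kappa(t)\,dt$ along $\gamma_{x,\xi}$ equals $\int_0^{\ell}t\,K^+(\phi_t)\,dt$ along the reversed geodesic, and both are bounded by $k^+(\M,\g)$ by \eqref{2.9}. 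Combining the three steps proves \eqref{2.37}.

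The main obstacle, I expect, is the rigorous justification of the disintegration in the first step: the times $\ell_\pm$ are only smooth away from the glancing set $T\M\cap T(\p\M)$, so Santal\'o's formula and the attendant change of variables must be handled with care near tangential directions; one argues on the open, full-measure set of non-glancing directions, using that $\ell_+$ is smooth on $\p_+S\M$. A secondary technical point is the reduction from the semibasic tensor $u$ to its scalar norm $a=\abs{u}$ via Kato's inequality, which must be justified at the points where $a$ vanishes; this is standard but should be noted.
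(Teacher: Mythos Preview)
Your proof is correct. The paper does not give its own proof of this lemma; it simply cites Sharafutdinov's book \cite{[Sh]}, p.~124. Your argument---Santal\'o's disintegration of the Liouville measure over $\p_+S\M$, followed by the one-dimensional Hardy-type estimate along each geodesic using the vanishing at the exit time and Kato's inequality, and finally the identification of the weight $\int_0^{\ell}(\ell-t)\kappa(t)\,dt$ with the $k^+$ of \eqref{2.9} via time reversal---is exactly the standard proof found there, so there is nothing substantive to compare.
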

\subsection*{Proof of Theorem \ref{theorem}}
We suppose, for a moment, that we have proved the equality
\begin{equation}\label{2.38}
\int_{S\M} \left[ \vert \overset{h}{\nabla} u \vert ^2-\langle R(\xi,\overset{\vv}{\nabla} u)\xi,  \overset{\vv}{\nabla} u\rangle+(n-2) \vert Hu\vert ^2 \right] \dvv= -\int_{\p_+S\M} \langle V,\nu\rangle \dss, 
\end{equation}
and the estimates 
\begin{equation}\label{2.39}
\vert \int_{\p_+S\M}\langle V,\nu\rangle  \dss \vert \leq C \Vert \I f \Vert^2_{H^1(\p_+S\M)},
\end{equation}
and 
\begin{equation}\label{2.40}
\vert \int_{S\M} \langle R(\xi,\overset{\vv}{\nabla} u)\xi,  \overset{\vv}{\nabla} u\rangle  \dvv \vert \leq k^+  \int_{S\M}\vert H \overset{\vv}{\nabla}u\vert^2 \dvv.
\end{equation}
Combining (\ref{2.40}), (\ref{2.39}) and (\ref{2.38}) with item (3) of Lemma \ref{L.2.2}, we find 
\begin{equation}\label{2.41}
(1-k^+)\int_{S\M}  \vert \overset{\hh}{\nabla} u \vert ^2 \dvv+(n-2) \int_{S\M} \vert Hu\vert ^2  \dvv\leq C \Vert \I f \Vert^2_{H^1(\p_+S\M)}.
\end{equation}
For $k^+<1$ and $n\geq 2$, we deduce the estimate
\begin{equation}\label{2.42}
\int_{S\M}  \vert \overset{\hh}{\nabla} u \vert ^2 \dvv \leq  C \Vert \I f \Vert^2_{H^1(\p_+S\M)}.
\end{equation}
In view of the definition of $H$, in (\ref{2.30}), there exists a constant $C$ such that 
$$
\vert Hu \vert^2 \leq C  \vert \overset{\hh}{\nabla} u \vert ^2 .
$$ 
Using the item (2) of Lemma \ref{L.2.2} and (\ref{2.30}), we conclude that 
$$
\Vert f \Vert ^2_{L^2(S\M)}=\int_{S\M} \vert Hu \vert^2 \dvv\leq C \int_{S\M}  \vert \overset{\hh}{\nabla} u \vert ^2 \dvv \leq  C\Vert\I f \Vert^2_{H^1(\p_+S\M)},
 $$ and the Theorem \ref{theorem} is done. \\   
 Now, we come back to prove (\ref{2.38})-(\ref{2.39}) and (\ref{2.40}). First, we start with (\ref{2.40}). By (\ref{2.8}) we find
 \begin{equation}\label{2.43}
 \vert \langle R(\xi,\overset{\vv}{\nabla} u)\xi,  \overset{\vv}{\nabla} u\rangle \vert \leq K^+(x,\xi) \vert \overset{\vv}{\nabla} u(x,\xi) \vert^2,\quad \forall (x,\xi)\in S\M.
 \end{equation}
  Furthermore, the lemma \ref{L.2.3} combined with (\ref{2.43}) gives the following estimate
\begin{align} \label{2.44}
\int_{S \M} \vert \langle R(\xi,\overset{\vv}{\nabla} u)\xi,  \overset{\vv}{\nabla} u\rangle \vert \dvv & \leq k^+ \int_{S \M}  \vert H\overset{\vv}{\nabla} u \vert^2 \dvv & \cr & \leq k^+ \int_{S \M}  \vert \overset{\hh}{\nabla} u \vert^2 \dvv,
\end{align} 
This completes the proof of (\ref{2.40}).\\
We prove now (\ref{2.38}).  Since we have $Hu=-f$ and  $\overset{\vv}{\nabla}f=0$, then the Pestov's identity (\ref{2.31}) gives 
 \begin{equation}\label{2.45}
 \vert  \overset{\hh}{\nabla} u \vert^2+  \overset{\hh}{\dive}(V) + \overset{\vv}{\dive}(W)- \langle R(\xi,\overset{\vv}{\nabla} u)\xi,  \overset{\vv}{\nabla} u\rangle=0.
 \end{equation}
 Avoiding eventual singularities of $u$  on $T \p\M$, we will consider the variety $\M_{\rho}$ defined by $$\M_{\rho}=\{x \in \M,\quad d_\g(x,\p \M) \geq \rho \},$$ where $\rho >0$. In some neighbourhood of $\p \M,$ the function $x \mapsto d_\g(x,\p \M)$ is smooth and $\p \M_{\rho}$ is strictly convex for sufficiently small $\rho >0.$ The function $u$ is smooth on $S\M_{\rho}$ since $S\M_{\rho}\subset S\M \setminus S(\p \M)$. Integrating  (\ref{2.45}) over $S\M_{\rho}$ and using the formula divergence (\ref{2.27}) and (\ref{2.28}), we find 
 \begin{align*}
 \int_{S\M_{\rho}}\left[ \vert  \overset{\hh}{\nabla} u \vert^2- \langle R(\xi,\overset{\vv}{\nabla} u)\xi,  \overset{\vv}{\nabla} u\rangle\right]\, \dvv= &
 -\int_{S\M_{\rho}} \left[ \overset{\hh}{\dive}(V)  + \overset{\vv}{\dive}(W)\right]\, \dvv \cr =&
  -\int_{\p S\M_{\rho}} \langle V, \nu \rangle \quad d\sigma^{2n-2} -(n-2)\int_{S\M_{\rho}} \langle W, \xi \rangle \dvv,
 \end{align*}
 where $\nu=\nu_{\rho}(x)$ is the unit vector of the outer normal to the boundary of $\M_{\rho}$.
 In view of (\ref{2.33}), we have 
 $$\langle W, \xi \rangle=\langle\xi,\overset{\hh}{\nabla} u\rangle ^2 = \vert Hu\vert ^2.$$
Hence, we obtain the equality 
\begin{equation}\label{2.46}
 \int_{S\M_{\rho}}\left[ \vert  \overset{\hh}{\nabla} u \vert^2- \langle R(\xi,\overset{\vv}{\nabla} u)\xi,  \overset{\vv}{\nabla} u\rangle+(n-2) \vert Hu\vert^2 \right]  \dvv= -\int_{\p S\M_{\rho}} \langle V, \nu \rangle \,d\sigma^{2n-2}.
\end{equation} 
Now, we wish pass to the limit as $\rho \rightarrow 0.$ We will apply the Lebesgue dominated convergence theorem.  Denote by $\mathbb{\chi}_\rho$ the characteristic function of the set $S\M_{\rho}$ and by $p$  the projection $p : \p S\M \longrightarrow  \p S\M_\rho$, $p(x,\xi)=(x',\xi'),$ where $x'$ 
 is such that the geodesic $\gamma_{xx'}$ has length $\rho$ and intersects $\p \M$ orthogonally at $x$ and $x'$, and $\xi'$ is obtained 
 by the parallel translation of the vector $\xi$ along $\gamma_{xx'}$. So the equality (\ref{2.46}) becomes 
 \begin{equation}\label{2.47}
 \int_{S\M}\left[ \vert  \overset{\hh}{\nabla} u \vert^2- \langle R(\xi,\overset{\vv}{\nabla} u)\xi,  \overset{\vv}{\nabla} u\rangle+(n-2) \vert Hu\vert ^2 \right] \mathbb{\chi}_\rho  \dvv= -\int_{\p S\M} \langle V, \nu \rangle p_* ( d\sigma^{2n-2}).
\end{equation} 
Note that all the integrands of (\ref{2.47}) are smooth on $S\M \setminus \p S \M$ and so, they converge towards their values almost everywhere, when $\rho \rightarrow 0$. 
Since the functions $\vert  \overset{\hh}{\nabla} u \vert^2$ and $\vert Hu\vert ^2$  are positive and the second function satifies (\ref{2.43}), then the left side of (\ref{2.47}) converges as $\rho \rightarrow 0$.  To apply the Lebesgue dominated convergence theorem in (\ref{2.47}), it remains to prove that $\vert \langle V, \nu \rangle p_*  \vert$ is bounded by a summable function on $\p S \M$ which does not depend on $\rho$.\\
For $(x,\xi)\in \p S\M$, we put
$$
\overset{\hh}{\nabla}_{\textrm{tan}} u =\overset{\hh}{\nabla} u -\langle\overset{\hh}{\nabla} u ,\nu\rangle\nu,\quad \overset{\vv}{\nabla}_{\textrm{tan}} u =\overset{\vv}{\nabla} u -\langle\overset{\vv}{\nabla} u,\xi\rangle\xi.
$$  
We see that
$$
\langle\overset{\hh}{\nabla}_{\textrm{tan}} u ,\nu\rangle=\langle\overset{\vv}{\nabla}_{\textrm{tan}} u,\xi\rangle=0.
$$
Then $\overset{\hh}{\nabla}_{\textrm{tan}}$ and $\overset{\vv}{\nabla}_{\textrm{tan}}$ are in fact  differential operators on $\p S\M$ and $\overset{\hh}{\nabla}_{\textrm{tan}}u$, $\overset{\vv}{\nabla}_{\textrm{tan}}u$ are completely determined by the restriction $u_{|\p S\M}$ of $u$ on $\p S\M$.\\
For $(x,\xi)\in\p S\M$, by a simple computation we obtain
\begin{equation}\label{2.48}
\langle V, \nu \rangle=\langle \overset{\hh}{\nabla}_{\textrm{tan}} u , \overset{\vv}{\nabla}_{\textrm{tan}} u  \rangle\langle\xi,\nu\rangle-\langle \overset{\hh}{\nabla}_{\textrm{tan}} u , \xi\rangle\langle \overset{\vv}{\nabla}_{\textrm{tan}}  u ,\nu\rangle.
\end{equation}
From (\ref{2.34}), we can see that the derivatives $\overset{\hh}{\nabla}_{\textrm{tan}}u$ and $\overset{\vv}{\nabla}_{\textrm{tan}} u$  are locally bounded on $\p S\M$. It is important that the right-hand side of (\ref{2.48}) does not contain $\langle\overset{\hh}{\nabla} u,\nu\rangle$ and $\langle\overset{\vv}{\nabla}u,\xi \rangle.$\\
Taking $\rho \rightarrow 0$ in the equality (\ref{2.46}), we have 
$$
\int_{S\M} \left[ \vert \overset{\hh}{\nabla} u \vert ^2-\langle R(\xi,\overset{\vv}{\nabla} u)\xi,  \overset{\vv}{\nabla} u\rangle+
(n-2) \vert Hu\vert ^2 \right]\dvv= -\int_{\p S\M} \langle V, \nu \rangle \dss.
$$
Finally, It remains to prove the estimate (\ref{2.39}).  In view of the boundary condition $u=\I f$ on $\p_+S\M$ and $u=0$ on $\p_-S\M$ we obtain
\begin{align*}
 \int_{\p S\M} \langle V, \nu \rangle \dss &=\int_{\p_+ S\M} \para{\langle \overset{\hh}{\nabla}_{\textrm{tan}} (\I f) , \overset{\vv}{\nabla}_{\textrm{tan}} (\I f)  \rangle\langle\xi,\nu\rangle-\langle \overset{\hh}{\nabla}_{\textrm{tan}} (\I f) , \xi\rangle\langle \overset{\vv}{\nabla}_{\textrm{tan}} (\I f) ,\nu\rangle} \dss\cr
& :=\int_{\p_+ S\M} \mathcal{Q}(\I f) \dss .
 \end{align*}
where $\mathcal{Q}u$ is a quadratic form in variables $\overset{\hh}{\nabla}_{\textrm{tan}}u$ and $\overset{\vv}{\nabla}_{\textrm{tan}}u$ and hence, $\mathcal{Q}$ is a quadratic first-order  differential operator on the manifold $\p_+ S \M$. Consequently, there exists a constant $C$ such that we have 
$$
\vert  \int_{\p_+ S \M} \mathcal{Q}(\I f) \dss \vert \leq C \Vert \I f \Vert ^2_{H^1(\p_+ S \M)}.
$$
 This completes the proof of the Theorem \ref{theorem}.
\section{Geometric optics solutions for the damped wave equation}
\setcounter{equation}{0}
The main result in this section is Lemma \ref{L.3.2}, which ensures the existence of a familly of solutions of the wave equation.
\smallskip

The WKB expansion method is a classical way to construct a special solution with a large parameter 
of wave systems. It is based on the assumption that the solution of the wave equation, can be sought as an expansion in powers of the frequency.
This expansion arises here as a power series depending on the small parameter $h$, which represents the
relative wavelength of the initial conditions. Introducing it in a scalar  wave equation leads to a system of coupled equations governing the behavior of the phase (eikonal equation) and of the amplitudes of the different expansion coefficients (transport equations).
\smallskip

 Denote by $\dive X$ the divergence of a vector field $X\in H^1(\M,T\M)$ on $\M$, i.e. in local coordinates (see pp. 42, \cite{[KKL]}),
\begin{equation}\label{3.1}
\dive X=\frac{1}{\sqrt{\abs{\g}}}\sum_{i=1}^n\p_i\para{\sqrt{\abs{\g}}\,X^i},\quad X=\sum_{i=1}^nX^i\p_i, \quad \abs{\g}=\det\g.
\end{equation}
If $X\in H^1(\M,T\M)$ the divergence formula reads
\begin{equation}\label{3.2}
\int_\M\dive X \dv=\int_{\p \M}\seq{X,\nu} \ds,
\end{equation}
and for a function $f\in H^1(\M)$ Green's formula reads
\begin{equation}\label{3.3}
\int_\M\dive X\,f\dv=-\int_\M\seq{X,\nabla f} \dv+\int_{\p \M}\seq{X,\nu} f\ds.
\end{equation}
Then if $f\in H^1(\M)$ and $w\in H^2(\M)$, the following identity holds
\begin{equation}\label{3.4}
\int_\M\Delta w f\dv=-\int_\M\seq{\nabla w,\nabla f} \dv+\int_{\p \M}\p_\nu w f \ds.
\end{equation}
In this section we give a construction of geometric optics solutions to the wave equation which concentrated along geodesic curves in space-time. We are, however, dealing with damped equations.
For $T>\textrm{Diam}_\g(\M)$, let $\M_1$ a simple Riemannian manifold and, $\varepsilon>0$ such that
\begin{equation}\label{3.5}
\M_1\supset\overline{\M},\quad T>\textrm{Diam}_\g(\M_1)+2\varepsilon.
\end{equation}
The absorption coefficients $a_1$, $a_2$ and the potentials $q_1$ and $q_2$ may be extended to $\M_1$.\\
Let $y\in \p \M_1$. Denote points in $\M_1$ by $(r,\xi)$
where $(r,\xi)$ are polar normal coordinates in $\M_1$ with center
$y$. That is
\begin{equation}\label{3.6}
x=\exp_{y}(r\xi),\quad r=d_\g(y,x)>0,\quad \xi\in S_{y}\M_1=\set{\xi\in T_{y}\M_1,\,\,\abs{\xi}=1}.
\end{equation}
In these coordinates (which depend on the choice of $y$) the metric takes the form
$$
\widetilde{\g}(r,\xi)=\dd r^2+\g_0(r,\xi),
$$
where $\g_0(r,\xi)$ is a smooth positive definite metric.
For any function $w$ compactly supported in $\M$, we set for $r>0$ and $\xi\in S_y\M_1$
$$
\widetilde{w}(r,\xi)=w(\exp_{y}(r\xi)),
$$
where we have extended $w$ by $0$ outside $\M$.\\
Finally we denote 
$$
S_y^+\M_1=\set{\xi\in S_y\M_1,\,\, \seq{\nu(y),\xi}<0}.
$$
\subsection{Eikonal and transport equations}
We start with the following lemma which give a solution of an eikonal equation
\begin{lemma}\label{L.3.1}
 For $y\in\p\M_1$, we denote the function $\varrho(x)=d_\g(y,x)$. Then $\varrho\in\mathcal{C}^2(\M)$ and satisfies the eikonal equation 
\begin{equation}\label{3.7}
\abs{\nabla\varrho}^2=\g^{ij}\frac{\p\varrho}{\p x_i}\frac{\p\varrho}{\p
x_j}=1,\qquad \forall x\in \M.
\end{equation}
\end{lemma}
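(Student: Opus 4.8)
The plan is to exploit the simplicity of $\M_1$ together with the fact that the base point $y$ lies on $\p\M_1$ while $\overline{\M}$ is compactly contained in $\M_1^{\textrm{int}}$. First I would recall that, since $(\M_1,\g)$ is simple, the exponential map $\exp_y$ is a diffeomorphism from $\exp_y^{-1}(\M_1)\subset T_y\M_1$ onto $\M_1$; in particular it has no critical points, so the polar normal coordinates $(r,\xi)$ centred at $y$ from \eqref{3.6} are genuine smooth coordinates on $\M_1\setminus\set{y}$, and in these coordinates one has simply $\varrho(x)=r$.

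For the regularity statement, I would observe that compactness of $\overline{\M}$ and the inclusion $\overline{\M}\subset\M_1^{\textrm{int}}$ force $d_\g(y,x)\geq\delta$ for some $\delta>0$ and all $x\in\M$; hence $\M$ lies entirely in the region $\set{r\geq\delta}$ on which the polar coordinate map is a smooth diffeomorphism onto its image. Since $\varrho$ is the first coordinate function there, it is $\mathcal{C}^\infty$ — a fortiori $\mathcal{C}^2$ — on a neighbourhood of $\overline{\M}$.

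For the eikonal equation I would invoke the Gauss lemma, which is precisely the statement recalled just above: in the coordinates $(r,\xi)$ the metric has the block form $\widetilde{\g}(r,\xi)=\dd r^2+\g_0(r,\xi)$ with $\g_0$ a metric on the geodesic spheres $\set{r=\textrm{const}}$, so that $\widetilde\g$ and its inverse are block–diagonal with $\g^{rr}=1$. Writing the $r$-coordinate as $x^1$, one has $\frac{\p\varrho}{\p x^i}=\delta_{i1}$, and hence by \eqref{1.2}
$$
\nabla\varrho=\sum_{i,j=1}^n\g^{ij}\frac{\p\varrho}{\p x^i}\frac{\p}{\p x^j}=\g^{rr}\frac{\p}{\p r}=\frac{\p}{\p r},
$$
whence $\abs{\nabla\varrho}^2=\widetilde{\g}\para{\tfrac{\p}{\p r},\tfrac{\p}{\p r}}=1$ on all of $\M$. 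Equivalently, $\nabla\varrho(x)$ is the unit tangent at $x$ to the unique unit-speed geodesic issuing from $y$ and passing through $x$, which gives $\abs{\nabla\varrho}\equiv 1$ directly.

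The only point requiring genuine care is the global smoothness of $\varrho$ on $\M$, and this is exactly where the hypotheses enter: the absence of conjugate points on $\M_1$ prevents $\varrho$ from losing differentiability at interior points, and the choice $y\in\p\M_1$ with $\overline{\M}$ compactly inside $\M_1^{\textrm{int}}$ keeps the unavoidable singularity of $\varrho$ at its own base point away from $\M$. Once these are in place, the remainder is the routine bookkeeping sketched above, so I expect no further obstacle.
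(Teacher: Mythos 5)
Your proof is correct and follows essentially the same route as the paper: the paper's (very terse) argument likewise uses simplicity and the fact that $y\in\p\M_1$ lies off $\overline{\M}$ to get $\varrho\in\mathcal{C}^\infty(\M)$, writes $\widetilde{\varrho}(r,\xi)=r$ in polar normal coordinates, and reads off the eikonal equation from the block form $\widetilde{\g}=\dd r^2+\g_0(r,\xi)$ (the Gauss lemma). You have merely supplied the details — the uniform lower bound $d_\g(y,\cdot)\geq\delta$ on $\M$ and the computation $\g^{rr}=1$ — that the paper leaves implicit.
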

\begin{proof}
By the simplicity assumption, since $y\notin\overline{\M}$, we have $\varrho\in\mathcal{C}^\infty(\M)$, and in polar normal coordinates, we get
\begin{equation}\label{3.8}
\widetilde{\varrho}(r,\xi)=r=d_\g(y,x).
\end{equation}
The proof is complete.
\end{proof}
Let us introduce the following spaces:
\begin{equation}\label{3.9}
\mathcal{V}(Q)=\set{\theta\in H^3(0,T;L^2(\M))\cap H^1(0,T;H^2(\M)),\,\,\p_t^j\theta(\cdot,0)=\p_t^j\theta(\cdot,T)=0,\,j=0,1,2.},
\end{equation}
and
\begin{equation}\label{3.10}
\mathcal{W}(Q)=\set{\psi\in W^{2,\infty}(Q),\,\, \p_t\psi\in W^{2,\infty}(Q)},
\end{equation}
equipped with the norms:
\begin{eqnarray*}
\norm{\theta}_{\mathcal{V}(Q)}&:=&\norm{\theta}_{H^1(0,T;H^2(\M))}+\norm{\theta}_{H^3(0,T;L^2(\M))}, \quad \theta\in\mathcal{V}(Q),\cr
\\
\norm{\psi}_{\mathcal{W}(Q)}&:=&\norm{\psi}_{W^{2,\infty}(Q)}+\norm{\p_t\psi}_{W^{2,\infty}(Q)},\quad \psi\in\mathcal{W}(Q).
\end{eqnarray*}
We want to find a function $\theta\in \mathcal{V}(Q)$ which solves the first transport equation
\begin{equation}\label{3.11}
\p_t \theta+\seq{d\varrho,d\theta}+\frac{1}{2} (\Delta \varrho)\theta=0,\qquad \forall t\in\R,\, x\in\M,
\end{equation}
where $\varrho$ is given by Lemma \ref{L.3.1}.\\
Moreover for $a\in W^{2,\infty}(\M) $, we need to find a function  
$\psi_a\in\mathcal{W}(Q) $ which solves the second transport equation
\begin{equation}\label{3.12}
\p_t \psi_a+\seq{d\varrho,d\psi_a}+\frac{a}{2}\psi_a=0,\qquad \forall t\in\R,\, x\in\M.
\end{equation}

The first step is to solve the transport equation (\ref{3.11}). Recall that
if $f(r)$ is any function of the geodesic distance $r$, then
\begin{equation}\label{3.13}
\Delta_{\widetilde{\g}}f(r)=f''(r)+\frac{\alpha^{-1}}{2}\frac{\p
\alpha}{\p r}f'(r).
\end{equation}
Here $\alpha=\alpha(r,\xi)$ denotes the square of the volume element in geodesic polar coordinates.
The transport equation (\ref{3.11}) becomes
\begin{equation}\label{3.14}
\frac{\p \widetilde{\theta}}{\p t}+\frac{\p \widetilde{\varrho}}{\p
r}\frac{\p \widetilde{\theta}}{\p
r}+\frac{1}{4}\widetilde{\theta}\alpha^{-1}\frac{\p \alpha}{\p r}\frac{\p
\widetilde{\varrho}}{\p r}=0.
\end{equation}
Thus $\widetilde{\theta}$ satisfies
\begin{equation}\label{3.15}
\frac{\p \widetilde{\theta}}{\p t}+\frac{\p \widetilde{\theta}}{\p
r}+\frac{1}{4}\widetilde{\theta}\alpha^{-1}\frac{\p \alpha}{\p r}=0.
\end{equation}
Let $\phi\in\mathcal{C}_0^\infty(\R)$ and $\Psi\in H^2(S^+_y\M)$. Let us write $\widetilde{\theta}$ in the form
\begin{equation}\label{3.16}
\widetilde{\theta}(t,r,\xi)=\alpha^{-1/4}\phi(t-r)\Psi(\xi).
\end{equation}
Direct computations yield
\begin{equation}\label{3.17}
\frac{\p \widetilde{\theta}}{\p
t}(t,r,\xi)=\alpha^{-1/4}\phi'(t-r)\Psi(\xi),
\end{equation}
and, we find
\begin{equation}\label{3.18}
\frac{\p \widetilde{\theta}}{\p
r}(t,r,\xi)=-\frac{1}{4}\alpha^{-5/4}\frac{\p\alpha}{\p
r}\phi(t-r)\Psi(\xi)-\alpha^{-1/4}\phi'(t-r)\Psi(\xi).
\end{equation}
Finally, (\ref{3.18}) and (\ref{3.17}) yield
\begin{equation}\label{3.19}
\frac{\p \widetilde{\theta}}{\p t}(t,r,\xi)+\frac{\p \widetilde{\theta}}{\p
r}(t,r,\xi)=-\frac{1}{4}\alpha^{-1}\widetilde{\theta}(t,r,\xi)\frac{\p\alpha}{\p
r}.
\end{equation}
Now if we assume that $\mathrm{supp}(\phi) \subset (0,\epsilon)$,  then for any $x=\exp_y(r\xi)\in \M$, it is easy to
see that 
$$
\p_t^j\widetilde{\theta}(0,r,\xi)=\p_t^j\widetilde{\theta}(T,r,\xi)=0,\quad j=0,1,2,\,\, T-r>\varepsilon.
$$
For the second transport equation (\ref{3.12}), in polar coordinates, takes the form
\begin{equation}\label{3.20}
\frac{\p \widetilde{\psi}_a}{\p t}+\frac{\p \widetilde{\varrho}}{\p
r}\frac{\p \widetilde{\psi}_a}{\p
r} +\frac{1}{2}\widetilde{a}(r,y,\xi)\widetilde{\psi}_a=0,
\end{equation}
where $\widetilde{a}(r,y,\xi):=a(\theta_r(y,\xi))$. Thus $\widetilde{\psi}_a$ satisfies
\begin{equation}\label{3.21}
\frac{\p \widetilde{\psi}_a}{\p t}+\frac{\p \widetilde{\psi}_a}{\p
r}+\frac{1}{2}\widetilde{a}(r,y,\xi)\widetilde{\psi}_a=0.
\end{equation}
Thus, we can choose $\widetilde{\psi}_a$ as following
\begin{equation}\label{3.22}
\widetilde{\psi}_a(t,y,r,\xi)=\exp\para{-\frac{1}{2}\int_0^t\widetilde{a}(r-s,y,\xi)ds}.
\end{equation}
Since $a\in W^{2,\infty}(\M)$, we get $\psi_a\in \mathcal{W}(Q)$. Hence (\ref{3.12}) is solved.
\subsection{WKB-solutions of the wave equation}
We  introduce the function
\begin{equation}\label{3.23}
\varphi(x,t):=\varrho(x)-t,\quad x\in\M,\,\, t\in (0,T),
\end{equation}
where $\varrho$ is given by Lemma \ref{L.3.1}.
\begin{lemma}\label{L.3.2}
 Let  $a\in W^{2,\infty}(\M)$, $q\in W^{2,\infty}(\M)$, and $\theta\in\mathcal{V}(Q)$, $\psi_{a}\in\mathcal{W}(Q)$ solve respectively (\ref{3.11}) and (\ref{3.12}). Then for all $h>0$ small enough, there exists a solution 
 $$
 u(x,t;h)\in \mathcal{C}^2(0,T;L^2(\M))\cap\mathcal{C}^1(0,T;H^1(\M))\cap\mathcal{C}(0,T;H^2(\M))
 $$ 
 of the wave equation
\begin{equation*}
(\p^2_t-\Op)u=0,\quad \textrm{in}\quad Q,
\end{equation*}
with the initial condition
\begin{equation*}
u(x,0)=\p_tu(x,0)=0,\quad \textrm{in}\quad \M,
\end{equation*}
 of the form
\begin{equation}\label{3.24}
u(x,t)=\theta(x,t)\psi_a(x,t)e^{i\varphi(x,t)/h}+r_h(x,t),
\end{equation}
the remainder $r_h(x,t)$ is such that
\begin{align*}
r_h(x,t)&=0,\quad (x,t)\in \Sigma , \\
r_h(x,0)=\p_t r_h(x,0)&=0,\quad x\in \M.
\end{align*}
Furthermore, there exist $C>0$, $h_0>0$ such that, for all $h \leq h_0$ the following estimates hold true.
\begin{equation}\label{3.25}
\sum_{k=0}^2 \sum_{j=0}^{k} h^{k-1} \|\p_t^{j} r_h(\cdot,t)\|_{H^{k-j}(\M)}\leq C\norm{\theta}_{\mathcal{V}(Q)}.
\end{equation}
The constant $C$ depends only on $T$ and $\M$ (that is $C$ does
not depend on $a$ and $h$). 
\end{lemma}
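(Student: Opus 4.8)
The plan is to carry out a WKB (geometric optics) construction: I would insert the oscillating profile into the wave operator, use the eikonal and transport equations to annihilate the two most singular powers of $h$, and then \emph{define} the remainder $r_h$ as the solution of the resulting forced wave equation with homogeneous Cauchy and boundary data, controlling its size through the well-posedness estimates of Lemma~\ref{L.1.1}. Write $P$ for the operator $\para{\p_t^2-\Op}$ and set $u_0(x,t)=\theta(x,t)\,\psi_a(x,t)\,e^{i\varphi(x,t)/h}$, with $\varphi=\varrho-t$ and $\varrho\in\mathcal C^\infty(\M)$ as in Lemma~\ref{L.3.1}. Expanding $Pu_0$ in powers of $h$, the $h^{-2}$ term equals $\theta\psi_a\bigl(\abs{\nabla\varrho}^2-(\p_t\varphi)^2\bigr)$ and vanishes by the eikonal equation \eqref{3.7}; the $h^{-1}$ term is, up to the factor $ih^{-1}e^{i\varphi/h}$, a first order linear expression in the amplitude $\theta\psi_a$, which, after writing $\theta\psi_a$ as a product and inserting the two transport equations \eqref{3.11} and \eqref{3.12}, cancels identically. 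Hence
$$
Pu_0=e^{i\varphi/h}\,g,\qquad g:=\para{\p_t^2-\Op}(\theta\psi_a).
$$
Because $\theta\in\mathcal V(Q)$ and $\psi_a\in\mathcal W(Q)$ (the spaces \eqref{3.9}--\eqref{3.10} being tailored to this), one has $g\in H^1(0,T;L^2(\M))$ with $\norm{g}_{H^1(0,T;L^2(\M))}\le C\norm{\theta}_{\mathcal V(Q)}$, the constant independent of $h$ (and of $a,q$ as long as these stay in a fixed bounded subset of $W^{2,\infty}(\M)$, via $\psi_a$); moreover $\p_t^j\theta(\cdot,0)=0$ for $j=0,1,2$ forces $u_0(\cdot,0)=\p_tu_0(\cdot,0)=0$ and $g(\cdot,0)=0$. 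Next I would let $r_h$ be the solution given by Lemma~\ref{L.1.1} of $Pr_h=-e^{i\varphi/h}g$ in $Q$, $r_h(\cdot,0)=\p_tr_h(\cdot,0)=0$ in $\M$, $r_h=0$ on $\Sigma$; this is applicable since $F:=-e^{i\varphi/h}g\in H^1(0,T;L^2(\M))$ and $F(\cdot,0)=0$. Then $u:=u_0+r_h$ solves $Pu=0$ with vanishing Cauchy data, has the asserted form \eqref{3.24} and regularity, and only \eqref{3.25} is left.

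For the terms of \eqref{3.25} with $k=1,2$ the crude energy estimates suffice. Since $\abs{e^{i\varphi/h}}=1$, $\norm{F}_{L^2(Q)}\le\norm{g}_{L^2(Q)}$, while $\p_tF=-e^{i\varphi/h}\bigl(-ih^{-1}g+\p_tg\bigr)$ carries exactly one negative power of $h$, so $\norm{F}_{H^1(0,T;L^2(\M))}\le Ch^{-1}\norm{\theta}_{\mathcal V(Q)}$. Inserting these into \eqref{1.5} and \eqref{1.6} gives, for every $t$,
$$
\norm{\p_tr_h(\cdot,t)}_{L^2(\M)}+\norm{\nabla r_h(\cdot,t)}_{L^2(\M)}\le C\norm{\theta}_{\mathcal V(Q)},
$$
$$
\norm{\p_t^2r_h(\cdot,t)}_{L^2(\M)}+\norm{\nabla\p_tr_h(\cdot,t)}_{L^2(\M)}+\norm{\Delta r_h(\cdot,t)}_{L^2(\M)}\le Ch^{-1}\norm{\theta}_{\mathcal V(Q)}.
$$
Together with Poincar\'e's inequality $\norm{r_h}_{L^2(\M)}\le C\norm{\nabla r_h}_{L^2(\M)}$ (valid as $r_h=0$ on $\Sigma$, whence also $\p_tr_h|_{\p\M}=0$), the definition of the $H^1(\M)$-norm, and elliptic $H^2$-regularity for the Dirichlet Laplacian ($\norm{r_h}_{H^2(\M)}\le C\bigl(\norm{\Delta r_h}_{L^2(\M)}+\norm{r_h}_{L^2(\M)}\bigr)$), every summand of \eqref{3.25} with $k=1$ or $k=2$ is bounded by $C\norm{\theta}_{\mathcal V(Q)}$.

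The sole remaining summand of \eqref{3.25} is $h^{-1}\norm{r_h(\cdot,t)}_{L^2(\M)}$, which needs the \emph{sharp} bound $\norm{r_h(\cdot,t)}_{L^2(\M)}\le Ch\norm{\theta}_{\mathcal V(Q)}$; here the energy estimates yield only $O(1)$, so the extra power of $h$ must be extracted from the temporal oscillation of the source $-e^{i\varphi/h}g$. This is the step I expect to be the main obstacle. The natural device is to push the WKB hierarchy one step further: choose an amplitude $b$ solving the transport ODE whose geodesic source is $g$, so that $-ih\,b\,e^{i\varphi/h}$ cancels $e^{i\varphi/h}g$ modulo a term of size $O(h)$ in $L^2(Q)$, and write $r_h=-ih\,b\,e^{i\varphi/h}+\eta_h$, where $\eta_h$ solves a wave equation with $O(h)$ source and non-homogeneous Dirichlet datum $\eta_h|_\Sigma=ih\,b\,e^{i\varphi/h}|_\Sigma$ (so that $r_h|_\Sigma=0$, while $r_h$ retains zero Cauchy data as $b$ vanishes to first order at $t=0$), and then deduce $\norm{\eta_h(\cdot,t)}_{L^2(\M)}\le Ch\norm{\theta}_{\mathcal V(Q)}$ from the hidden regularity (transposition) estimate for the non-homogeneous wave Dirichlet problem, dual to \eqref{1.8}. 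The explicit corrector $-ih\,b\,e^{i\varphi/h}$ contributes $O(h)$ to $\norm{\cdot}_{L^2(\M)}$ and only $O(1)$ to each of the $h$-weighted higher norms in \eqref{3.25}, so it does not disturb the previous step; it then remains only to check that all constants stay independent of $h$ and uniform over the admissible classes $\mathscr A(m_1,\eta)$ and $\mathscr Q(m_2)$, which is routine bookkeeping with Lemma~\ref{L.1.1}.
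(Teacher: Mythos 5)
Your construction of $r_h$, the cancellation of the $h^{-2}$ and $h^{-1}$ terms via the eikonal and the two transport equations, and the bounds for the $k=1,2$ summands of \eqref{3.25} via \eqref{1.5}--\eqref{1.6} all match the paper's argument. The one step you correctly identify as the main obstacle --- the sharp bound $\norm{r_h(\cdot,t)}_{L^2(\M)}\leq Ch\norm{\theta}_{\mathcal{V}(Q)}$ --- is, however, left as a sketch, and the sketch as written does not close. Your proposed second-order corrector $-ih\,b\,e^{i\varphi/h}$ with $b$ solving a transport equation sourced by $g=\para{\p_t^2-\Op}(\theta\psi_a)$ produces a new error term $-ih\,e^{i\varphi/h}\para{\p_t^2-\Op}b$, and to estimate the resulting $\eta_h$ by the basic energy inequality you need this source in $L^2(Q)$, i.e.\ $b$ with two full derivatives in $L^2$. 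Since integration along characteristics does not gain derivatives, $b$ inherits essentially the regularity of $g$, which already consumes two derivatives of $\theta$; so you would need roughly $\theta\in H^4$-type regularity, more than $\mathcal{V}(Q)=H^3(0,T;L^2(\M))\cap H^1(0,T;H^2(\M))$ provides. On top of that, the non-homogeneous Dirichlet trace of the corrector forces you into a transposition argument whose uniformity in $h$ is asserted but not checked.

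The paper avoids all of this with a short trick that exploits the time-independence of $a$ and $q$: the antiderivative $r_h^*(x,t)=\int_0^t r_h(x,s)\,\dd s$ solves the \emph{same} homogeneous IBVP \eqref{3.26} with source $V_h^*(x,t)=\int_0^t V_h(x,s)\,\dd s$. Writing $V_h=-e^{i\varphi/h}V_0$ and using $e^{i\varphi(x,s)/h}=ih\,\p_s\para{e^{i\varphi(x,s)/h}}$ (because $\varphi=\varrho(x)-s$), one integrates by parts in $s$ to get $\norm{V_h^*}_{L^2(Q)}\leq Ch\norm{\theta}_{\mathcal{V}(Q)}$, using only $V_0\in H^1(0,T;L^2(\M))$ and $V_0(\cdot,0)=0$ --- exactly what $\theta\in\mathcal{V}(Q)$ and $\psi_a\in\mathcal{W}(Q)$ supply. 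Then \eqref{1.5} applied to $r_h^*$ gives $\norm{r_h(\cdot,t)}_{L^2(\M)}=\norm{\p_t r_h^*(\cdot,t)}_{L^2(\M)}\leq Ch\norm{\theta}_{\mathcal{V}(Q)}$, with no extra corrector, no non-homogeneous boundary data, and no additional regularity demanded of $\theta$. You should replace your final paragraph with this argument (or else strengthen the hypotheses on $\theta$ and carry out the transposition estimate in full).
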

\begin{proof}
Let $r(x,t;h)$ solves the following 
homogenous boundary value problem
\begin{equation}\label{3.26}
\left\{
\begin{array}{llll}
\para{\partial^2_t-\Op}r(x,t)=V_h (x,t)
& \textrm{in }\,\, Q,\cr
r(x,0)=\p_tr(x,0)=0,& \textrm{in
}\,\,\M,\cr
r(x,t)=0 & \textrm{on} \,\, \Sigma.
\end{array}
\right.
\end{equation}
where the source term $V_h$ is given by 
\begin{equation}\label{3.27}
V_h(x,t)=-\para{\partial_t^2-\Op}\para{(\theta\psi_a)(x,t)e^{i\varphi/h}}.
\end{equation}
To prove our Lemma it would be enough to show that $r$ satisfies the estimates (\ref{3.25}).\\
By a simple computation, we have
\begin{align}\label{3.28}
-V_h (x,t)&=e^{i\varphi(x,t)/h}\para{\p_t^2-\Op}\para{(\theta\psi_a)( x,t)}\cr
&\quad -\frac{2i}{h} e^{i\varphi(x,t)/h}\psi_a(x,t)\para{\p_t\theta+\seq{d\varrho,d\theta}+\frac{\theta}{2}\Delta\varrho}( x,t)\cr
&\quad -\frac{2i}{h} e^{i\varphi(x,t)/h}\theta(x,t)\para{\p_t\psi_a+\seq{d\varrho,d\psi_a}+\frac{a}{2} \psi_a}(x,t)\cr
&\quad-\frac{1}{h^2} \theta\psi_a(x,t) e^{i\varphi(x,t)/h}\para{1-\abs{d\varrho}^2}.
\end{align}
Taking into account  (\ref{3.7})-(\ref{3.11}) and (\ref{3.12}), the right-hand side of (\ref{3.28}) becomes
\begin{align}\label{3.29}
V_h (x,t)&=-e^{i\varphi(x,t)/h}(\p_t^2-\Op)\para{(\theta\psi_a)(x,t)}
\cr &\equiv-e^{i\varphi(x,t)/h}V_0(x,t).
\end{align}
Since $\theta\in\mathcal{V}(Q)$ and $\psi_a\in\mathcal{W}(Q)$ we deduce that $V_0\in H^1_0(0,T;L^2(\M))$. 
Furthermore, there is a constant $C>0$, such that
\begin{equation}\label{3.30}
\norm{V_0}_{L^2(Q)}+\norm{\p_tV_0}_{L^2(Q)}\leq C\norm{\theta}_{\mathcal{V}(Q)}.
\end{equation}
By Lemma \ref{L.1.1}, we find
\begin{equation}\label{3.31}
r_h\in \mathcal{C}^2(0,T;L^2(\M))\cap\mathcal{C}^1(0,T;H^1_0(\M))\cap\mathcal{C}(0,T;H^2(\M)).
\end{equation}
Since the coefficients $a$ and $q$ do not depend on $t$, the function
$$
r^*_h(x,t)=\int_0^tr_h(x,s)ds,
$$
solves the mixed hyperbolic problem (\ref{3.26}) with the right side
$$
V^*_h(x,t)=\int_0^tV_h (x,s)ds=-ih\int_0^tV_0(x,s)\p_s\para{e^{i\varphi(x,s)/h}}ds.
$$
Integrating by part with respect to $s$, we conclude that
$$
\norm{V^*_h}_{L^2(Q)}\leq Ch\norm{\theta}_{\mathcal{V}(Q)}.
$$
and by (\ref{1.5}), we get
\begin{eqnarray}\label{3.32}
\norm{r_h(\cdot,t)}_{L^2(\M)}=\norm{\p_tr^*_h(\cdot,t)}_{L^2(\M)}\leq
Ch\norm{\theta}_{\mathcal{V}(Q)}.
\end{eqnarray}
Since $\norm{V_h }_{L^2(Q)}+h\norm{\p_tV_h }_{L^2(Q)}\leq C\norm{\theta}_{\mathcal{V}(Q)}$, by using again the energy estimates for the problem (\ref{3.26}), obtain
\begin{eqnarray}\label{3.33}
\norm{\p_tr_h(\cdot,t)}_{L^2(\M)}+\norm{\nabla r_h (\cdot,t)}_{L^2(\M)}\leq C\norm{\theta}_{\mathcal{V}(Q)}.
\end{eqnarray}
and by (\ref{1.6}), we have
\begin{eqnarray}\label{3.34}
\norm{\p_t^2 r_h(\cdot,t)}_{L^2(\M)}+\norm{\nabla\p_t r_h(\cdot,t)}_{L^2(\M)}+\norm{\Delta r_h(\cdot,t)}_{L^2(\M)}\leq Ch^{-1}\norm{\theta}_{\mathcal{V}(Q)}.
\end{eqnarray}
Collecting (\ref{3.32})-(\ref{3.33}) and (\ref{3.34}) we get (\ref{3.25}).
The proof is complete.
\end{proof}
By similar way, we can prove the following Lemma:
\begin{lemma}\label{L.3.3} Let  $a\in W^{2,\infty}(\M)$, $q\in W^{2,\infty}(\M)$, and $\theta\in\mathcal{V}(Q)$, $\psi_{-a}\mathcal{W}(Q)$ solve respectively (\ref{3.11}) and (\ref{3.12}) (with $a$ replaced by $-a$). Then for all $h>0$ small enough, there exists a solution 
$$
u(x,t;h)\in \mathcal{C}^2(0,T;L^2(\M))\cap\mathcal{C}^1(0,T;H^1(\M))\cap\mathcal{C}(0,T;H^2(\M))
$$ of the wave equation
\begin{equation*}
(\p^2_t-\Opp)u=0,\quad \textrm{in}\quad Q,
\end{equation*}
with the final condition
\begin{equation*}
u(T,x)=\p_tu(T,x)=0,\quad \textrm{in}\quad \M,
\end{equation*}
 of the form
\begin{equation}\label{3.35}
u(x,t;h)=\theta(x,t)\psi_{-a}(x,t)e^{i\varphi(x,t)/h}+r_h(x,t),
\end{equation}
the remainder $r_h(t,x)$ is such that
\begin{align*}
r_h(x,t)&=0,\quad (x,t)\in \Sigma , \\
r_h(x,T)=\p_tr_h(x,T)&=0,\quad x\in \M.
\end{align*}
Furthermore, there exist $C>0$, $h_0>0$ such that, for all $h \leq h_0$ the following estimates hold true.
\begin{equation}\label{3.36}
\sum_{k=0}^2 \sum_{j=0}^{k} h^{k-1} \|\p_t^{j} r_h(\cdot,t)\|_{H^{k-j}(\M)}\leq C\norm{\theta}_{\mathcal{V}(Q)}.
\end{equation}
The constant $C$ depends only on $T$ and $\M$ (that is $C$ does
not depend on $a$ and $h$). 
\end{lemma}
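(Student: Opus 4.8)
The plan is to follow the proof of Lemma~\ref{L.3.2} almost verbatim, replacing the forward Cauchy problem by the backward one; the substitution $t\mapsto T-t$ turns the backward mixed problem into the forward one of Lemma~\ref{L.1.1} (the damping term merely changes sign, which is a bounded perturbation, and $q$ is unchanged), so Lemma~\ref{L.1.1} and the energy estimates \eqref{1.5}--\eqref{1.6} transfer to it with the same constants. Put $w=\theta\psi_{-a}$ and look for $u$ of the form \eqref{3.35}, where the remainder $r_h$ is required to solve
\[
\para{\p_t^2-\Opp}r_h=V_h\ \ \textrm{in }Q,\qquad r_h(\cdot,T)=\p_tr_h(\cdot,T)=0\ \ \textrm{in }\M,\qquad r_h=0\ \ \textrm{on }\Sigma,
\]
with $V_h=-\para{\p_t^2-\Opp}\!\para{w\,e^{i\varphi/h}}$ and $\varphi(x,t)=\varrho(x)-t$ as in \eqref{3.23}.

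First I would expand $V_h$ exactly as in \eqref{3.28}--\eqref{3.29}. The $h^{-2}$ term is proportional to $1-\abs{d\varrho}^2$ and vanishes by the eikonal equation \eqref{3.7} (Lemma~\ref{L.3.1}). The $h^{-1}$ term, after applying the product rule to $w=\theta\psi_{-a}$, is a combination of $\psi_{-a}$ times the left-hand side of the first transport equation \eqref{3.11} and $\theta$ times the left-hand side of the second transport equation \eqref{3.12}; the point is that \eqref{3.12} is here taken with $a$ replaced by $-a$, which is precisely what compensates the sign of the damping coefficient in the backward setting, so both contributions vanish by the choice of $\theta$ and $\psi_{-a}$. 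One is therefore left with $V_h=-e^{i\varphi/h}V_0$, $V_0:=\para{\p_t^2-\Opp}(\theta\psi_{-a})$, and since $\theta\in\mathcal{V}(Q)$ and $\psi_{-a}\in\mathcal{W}(Q)$ this gives $V_0\in H^1(0,T;L^2(\M))$ with $V_0(\cdot,T)=0$ and $\norm{V_0}_{L^2(Q)}+\norm{\p_tV_0}_{L^2(Q)}\le C\norm{\theta}_{\mathcal{V}(Q)}$.

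For the remainder estimates I would argue as in Lemma~\ref{L.3.2}. Because $a$ and $q$ are time independent and $r_h(\cdot,T)=\p_tr_h(\cdot,T)=0$, the function $r_h^\ast(x,t)=\int_T^tr_h(x,s)\,ds$ solves the same backward problem with source $V_h^\ast=\int_T^tV_h(\cdot,s)\,ds$. Writing $e^{i\varphi(x,s)/h}=ih\,\p_s\!\para{e^{i\varphi(x,s)/h}}$ and integrating by parts in $s$ (the boundary contribution at $s=T$ drops since $V_0(\cdot,T)=0$) yields $\norm{V_h^\ast}_{L^2(Q)}\le Ch\norm{\theta}_{\mathcal{V}(Q)}$, so the backward analogue of \eqref{1.5} gives $\norm{r_h(\cdot,t)}_{L^2(\M)}=\norm{\p_tr_h^\ast(\cdot,t)}_{L^2(\M)}\le Ch\norm{\theta}_{\mathcal{V}(Q)}$. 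On the other hand $\norm{V_h}_{L^2(Q)}+h\norm{\p_tV_h}_{L^2(Q)}\le C\norm{\theta}_{\mathcal{V}(Q)}$ and $V_h\in H^1(0,T;L^2(\M))$ vanishes at $t=T$, so the backward analogues of \eqref{1.5}--\eqref{1.6} applied directly to $r_h$ control $\norm{\p_tr_h(\cdot,t)}_{L^2(\M)}+\norm{\nabla r_h(\cdot,t)}_{L^2(\M)}$ by $C\norm{\theta}_{\mathcal{V}(Q)}$ and $\norm{\p_t^2r_h(\cdot,t)}_{L^2(\M)}+\norm{\nabla\p_tr_h(\cdot,t)}_{L^2(\M)}+\norm{\Delta r_h(\cdot,t)}_{L^2(\M)}$ by $Ch^{-1}\norm{\theta}_{\mathcal{V}(Q)}$, the extra factor $h^{-1}$ coming from the $h^{-1}$ present in $\p_tV_h$. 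Summing over $k$ and $j$ produces \eqref{3.36}, and the boundary and final conditions on $r_h$ are exactly those required of the remainder in \eqref{3.35}.

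I expect the only delicate points to be the sign bookkeeping in the transport step — checking that using $\psi_{-a}$ rather than $\psi_a$ is what matches the backward, adjoint-type problem $\para{\p_t^2-\Opp}u=0$ — and verifying that the well-posedness of Lemma~\ref{L.1.1} together with \eqref{1.5}--\eqref{1.6} genuinely carries over to the final-value problem with constants independent of $h$ and $a$; everything else is a routine transcription of the proof of Lemma~\ref{L.3.2}.
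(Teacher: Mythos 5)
Your proposal is correct and coincides with what the paper intends: the paper gives no separate proof of Lemma \ref{L.3.3}, stating only that it follows ``by similar way'' from Lemma \ref{L.3.2}, and your argument is precisely that adaptation — time reversal $t\mapsto T-t$ to reduce the final-value problem to Lemma \ref{L.1.1}, the same WKB expansion with the eikonal and transport equations (with $\psi_{-a}$ absorbing the reversed sign of the damping term), and the same antiderivative trick $r_h^\ast$ with integration by parts in $e^{i\varphi/h}$ to gain the factor $h$ in the $L^2$ bound.
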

\section{Stable determination of the absorption coefficient}
\setcounter{equation}{0}
In this section, we prove the stability estimate of the absorption coefficient $a$. We are going to use the geometrical
optics solutions constructed in the previous section; this will provide information on the geodesic ray transform of the difference of two absorption coefficients.
\subsection{Preliminary estimates}
The main purpose of this section is to present a preliminary estimate, which relates the
difference of two absorption coefficients to the Dirichlet-to-Neumann map.
As before, we let $a_1,\,a_2\in\mathscr{A}(m_1,\eta)$ and $q_1,q_2\in\mathscr{Q}(m_2)$ such that $a_1=a_2$, $q_1=q_2$ near  the boundary $\p\M$. We set
$$
a(x)=(a_1-a_2)(x),\quad q(x)=(q_1-q_2)(x).
$$
Recall that we have extended $a_{1},a_{2}$ as $W^{2,\infty}(\M_1)$ in such a way that $a=0$ and $q=0$ on $\M_{1} \setminus \M$.\\
We denote $\psi_{a_2}\in\mathcal{W}(Q)$ and $\psi_{-a_1}\in\mathcal{W}(Q)$ the solutions of (\ref{3.12}) respectively with $a=a_2$ and $a=-a_1$ given by (\ref{3.22}), and set
\begin{equation}\label{4.1}
\psi_a(x,t)=\psi_{a_2}(x,t)\psi_{-a_1}(x,t).
\end{equation} 
\begin{lemma}\label{L.4.1}
Let $T>0$. There exist $C>0$ such that for any $\theta_j\in \mathcal{V}(Q)$, $j=1,2$, 
satisfying the transport equation (\ref{3.7}), the following estimate holds true:
\begin{equation}\label{4.2}
\abs{\int_{0}^T\!\!\!\!\int_{\M}a(x)(\theta_2\overline{\theta}_1)(x,t)\psi_{a}(x,t)\,\dv \, \dd t } 
\leq C\para{h+h^{-2}\norm{\Lambda_{a_1,q_1}-\Lambda_{a_2,q_2}}}\norm{\theta_1}_{\mathcal{V}(Q)}\norm{\theta_2}_{\mathcal{V}(Q)}
\end{equation}
for all $h\in (0,h_0)$.
\end{lemma}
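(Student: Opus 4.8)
The plan is to probe the two media with the geometric optics solutions built in Section~3 and to tie the boundary measurements together through a Green-type identity.

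First I would invoke Lemma~\ref{L.3.2} with $(a,q)=(a_2,q_2)$ and amplitude $\theta_2$ to obtain the solution $u_2=\theta_2\psi_{a_2}e^{i\varphi/h}+r_h^{(2)}$ of $\para{\p_t^2-\Ops}u_2=0$ in $Q$ with $u_2(\cdot,0)=\p_tu_2(\cdot,0)=0$, the phase $\varphi$ being given by \eqref{3.23}; and Lemma~\ref{L.3.3} with $(a,q)=(a_1,q_1)$ and amplitude $\theta_1$ to obtain $v=\theta_1\psi_{-a_1}e^{i\varphi/h}+\widetilde r_h$ solving $\para{\p_t^2-\Delta+a_1(x)\p_t-q_1(x)}v=0$ with $v(\cdot,T)=\p_tv(\cdot,T)=0$. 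Since the coefficients do not depend on $t$, the operator $\p_t^2-\Delta+a_1(x)\p_t-q_1(x)$ is the formal $L^2(Q)$-adjoint of $\p_t^2-\Opf$, so $\overline v$ solves it as well; and $\psi_{a_2}\psi_{-a_1}=\psi_a$ by \eqref{4.1}. As $u_2(\cdot,0)=0$, the trace $g:=u_2|_\Sigma$ belongs to $\h_0^1(\Sigma)$, so by Lemma~\ref{L.1.2} there is a solution $u_1$ of $\para{\p_t^2-\Opf}u_1=0$ with zero Cauchy data at $t=0$ and $u_1|_\Sigma=g$; then $\p_\nu u_1|_\Sigma=\Lambda_{a_1,q_1}g$ and $\p_\nu u_2|_\Sigma=\Lambda_{a_2,q_2}g$ by definition of the Dirichlet-to-Neumann map.

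Next, I would set $w:=u_1-u_2$. Since the two wave operators differ only by the terms $-a(x)\p_t-q(x)$, one gets $\para{\p_t^2-\Opf}w=a\,\p_tu_2+q\,u_2$ in $Q$, with $w(\cdot,0)=\p_tw(\cdot,0)=0$, $w|_\Sigma=0$, and $\p_\nu w|_\Sigma=\para{\Lambda_{a_1,q_1}-\Lambda_{a_2,q_2}}g$. Multiplying this equation by $\overline v$, integrating over $Q$ and integrating by parts in $x$ and $t$: the $t=0$ boundary terms vanish because $w$ has zero Cauchy data there, the $t=T$ ones vanish because $\overline v$ does, the lateral term $\int_\Sigma(\p_\nu\overline v)\,w$ vanishes because $w|_\Sigma=0$, and — since $\overline v$ solves the adjoint equation — only $-\int_\Sigma(\p_\nu w)\,\overline v$ survives. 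This produces the identity
\[
\int_0^T\!\!\int_\M\para{a\,\p_tu_2+q\,u_2}\,\overline v\,\dv\,\dd t=-\int_\Sigma\para{\Lambda_{a_1,q_1}-\Lambda_{a_2,q_2}}g\;\overline v\,\ds\,\dd t .
\]
Then I would plug the WKB forms into the left-hand side. Using $\p_t\varphi\equiv-1$ and $\big|e^{i\varphi/h}\big|=1$, the product $\para{a\,\p_tu_2}\,\overline v$ has principal part $-\tfrac ih\,a\,\theta_2\overline\theta_1\,\psi_{a_2}\psi_{-a_1}=-\tfrac ih\,a\,\theta_2\overline\theta_1\,\psi_a$, which, after multiplying the identity by $h$, reproduces exactly the integral in \eqref{4.2}. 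All remaining contributions — those containing a remainder $r_h^{(2)}$ or $\widetilde r_h$, the one where $\p_t$ falls on $\theta_2\psi_{a_2}$ instead of on the phase, and the whole $q\,u_2\,\overline v$ term (which picks up no $1/h$) — are, after the same normalization, bounded by $Ch\,\norm{\theta_1}_{\mathcal V(Q)}\norm{\theta_2}_{\mathcal V(Q)}$; this uses that $\psi_{a_1},\psi_{a_2}\in\mathcal W(Q)$, that $a$ and $q$ are bounded in $L^\infty$ uniformly over $\mathscr A(m_1,\eta)$ and $\mathscr Q(m_2)$, and the remainder estimates \eqref{3.25} and \eqref{3.36}, which give in particular $\norm{r_h^{(2)}}_{L^2(Q)}+\norm{\widetilde r_h}_{L^2(Q)}\le Ch\big(\norm{\theta_1}_{\mathcal V(Q)}+\norm{\theta_2}_{\mathcal V(Q)}\big)$ and $\norm{\p_tr_h^{(2)}}_{L^2(Q)}+\norm{\p_t\widetilde r_h}_{L^2(Q)}\le C\big(\norm{\theta_1}_{\mathcal V(Q)}+\norm{\theta_2}_{\mathcal V(Q)}\big)$. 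For the right-hand side I would use Cauchy--Schwarz together with \eqref{1.8} (equivalently the operator norm of the Dirichlet-to-Neumann difference), the trace estimate $\norm{g}_{H^1(\Sigma)}\le Ch^{-1}\norm{\theta_2}_{\mathcal V(Q)}$ (the $h^{-1}$ coming from $\p_te^{i\varphi/h}$) and $\norm{\overline v}_{L^2(\Sigma)}\le C\norm{\theta_1}_{\mathcal V(Q)}$, so that $h$ times this boundary integral is $\le C\norm{\Lambda_{a_1,q_1}-\Lambda_{a_2,q_2}}\norm{\theta_1}_{\mathcal V(Q)}\norm{\theta_2}_{\mathcal V(Q)}\le Ch^{-2}\norm{\Lambda_{a_1,q_1}-\Lambda_{a_2,q_2}}\norm{\theta_1}_{\mathcal V(Q)}\norm{\theta_2}_{\mathcal V(Q)}$ for $h\in(0,h_0)$ with $h_0<1$. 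Collecting the three estimates gives \eqref{4.2}.

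The hard part will be the bookkeeping in the last step: one must verify that every one of the many cross terms produced by the two WKB ansätze genuinely carries the factor $h$ after the normalization, which requires careful tracking of which $\p_t$ falls on the oscillating phase and which does not, and, crucially, of the precise powers of $h$ in \eqref{3.25}--\eqref{3.36}. The most delicate terms are $-\tfrac ih\int_Q a\,\theta_2\psi_{a_2}e^{i\varphi/h}\,\overline{\widetilde r_h}\,\dv\,\dd t$, which is only $O(h)$ because $\norm{\widetilde r_h}_{L^2(Q)}=O(h)$, and $\int_Q a\,\p_t r_h^{(2)}\,\overline\theta_1\psi_{-a_1}e^{-i\varphi/h}\,\dv\,\dd t$, which is $O(1)$ individually and becomes $O(h)$ only thanks to the extra $h$ of the normalization; a secondary technical point is the trace estimate for $g$ on the lateral boundary $\Sigma$.
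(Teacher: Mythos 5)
Your proposal is correct and follows essentially the same route as the paper's proof: construct $u_2$ via Lemma \ref{L.3.2}, feed its trace as Dirichlet data into the equation with coefficients $(a_1,q_1)$, pair the resulting difference $w$ with the backward geometric optics solution of Lemma \ref{L.3.3} through Green's formula, and then extract the principal term $-\tfrac{i}{h}a\,\theta_2\overline{\theta}_1\psi_a$ while bounding the remainder and boundary contributions. The only (harmless) deviations are a relabelling of $u_1$ and $v$ and a slightly sharper count of the powers of $h$ in the boundary term, which still lands within the stated bound \eqref{4.2}.
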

\begin{proof} First, if $\theta_2$ satisfies (\ref{3.11}), $\psi_{a_2}$ satisfies (\ref{3.12}), and $h<h_0$, Lemma \ref{L.3.2} guarantees
the existence of a geometrical optics solution $u_2$
\begin{equation}\label{4.3}
u_2(x,t)=(\theta_2\psi_{a_2})(x,t)e^{i\varphi(x,t)/h}+r_{2,h}(x,t),
\end{equation}
to the wave equation corresponding to the coefficients $a_2$ and $q_2$,
$$
\para{\p^2_t-\Ops}u(x,t)=0\quad \textrm{in}\,Q, \quad u(\cdot,0)=\p_tu(\cdot,0)=0 \quad \textrm{in}\,\, \M,
$$
where $r_{2,h}$ satisfies
\begin{gather}\label{4.4}
h^{-1}\norm{r_{2,h}(\cdot,t)}_{L^2(\M)}+\norm{\p_t r_{2,h}(\cdot,t)}_{L^2(\M)}+\norm{\nabla r_{2,h}(\cdot,t)}_{L^2(\M)}\leq C\norm{\theta_2}_{\mathcal{V}(Q)},
\\ \nonumber
r_{2,h}(x,t)=0,\quad\forall (x,t)\in\,\Sigma.
\end{gather}
Moreover
$$
u_2\in  \mathcal{C}^2(0,T;L^2(\M))\cap \mathcal{C}^1(0,T;H^1(\M)) \cap \mathcal{C}(0,T;H^2(\M)).
$$
Let us denote by $f_h$ the function
$$
f_h(x,t)=(\theta_2\psi_{a_2})(x,t)e^{i\varphi(x,t)/h},\quad  (x,t)\in\Sigma,
$$
and we consider $v$ the solution of the following non-homogenous boundary value problem
\begin{equation}\label{4.5}
\left\{\begin{array}{lll}
\para{\p_t^2-\Opf} v=0, & \textrm{in}\,\, Q,\cr
v(x,0)=\p_tv(x,0)=0, & \textrm{in}\,\, \M,\cr
v(x,t)=u_2(x,t):=f_{h}(x,t), & \textrm{on}\,\, \Sigma.
\end{array}
\right.
\end{equation}
We let $ w=v-u_2$. Therefore, $w$ solves the following homogenous boundary value problem
$$
\left\{\begin{array}{lll}
\para{\p_t^2-\Opf}w(x,t)=a(x)\p_tu_2(x,t)+q(x)u_2(x,t) & \textrm{in}\,\,  Q,\cr
w(x,0)=\p_tw(x,0)=0, & \textrm{in}\,\, \M,\cr
w(x,t)=0, & \textrm{on}\,\,  \Sigma.
\end{array}
\right.
$$
Using the fact that  $a(x)\p_tu_2+q(x)u_2 \in W^{1,1}(0,T;L^2(\M))$ with $u_2(\cdot,0)=\p_t u_2(\cdot,0)\equiv 0$, by Lemma \ref{L.1.1}, we deduce that
$$
w\in \mathcal{C}^1(0,T;L^2(\M))\cap \mathcal{C}(0,T;H^2(\M)\cap
H^1_0(\M)).
$$
Therefore, we have constructed a special solution
  $$ 
  \mathcal{C}^2(0,T;L^2(\M))\cap \mathcal{C}^1(0,T;H^1(\M)) \cap \mathcal{C}(0,T;H^2(\M)),
  $$
to the backward wave equation
\begin{align*}
\para{\partial_t^2-\Delta-a_1(x)\p_t+q_1(x)}u_1(x,t)&=0,  \quad (x,t) \in Q, \\
u_1(x,T)=u_1(x,T)&=0,  \quad x \in \M,
\end{align*}
having the special form
\begin{equation}\label{4.6}
u_1(x,t)=(\theta_1\psi_{-a_1})( t,x)e^{i\varphi(x,t)/h}+r_{1,h}(x,t),
\end{equation}
which corresponds to the coefficients $-a_1$ and $q_1$, where
$r_{1,h}$ satisfies for $h<h_0$
\begin{equation}\label{4.7}
h^{-1}\norm{r_{1,h}(\cdot,t)}_{L^2(\M)}+\norm{\p_t r_{1,h}(\cdot,t)}_{L^2(\M)}+\norm{\nabla
r_{1,h}(\cdot,t)}_{L^2(\M)}\leq C\norm{\theta_1}_{\mathcal{V}(Q)}.
\end{equation}
Integrating by parts and using Green's formula (\ref{3.4}), we find
\begin{multline}\label{4.8}
\int_0^T\!\!\!\int_\M\para{\p_t^2-\Opf}w\overline{u}_1\dv \, \dd t
= \int_0^T\!\!\!\int_\M a(x)\p_tu_2\overline{u}_1\dv \, \dd t\cr
+\int_0^T\!\!\!\int_\M q(x)u_2\overline{u}_1\dv \, \dd t
=-\int_0^T\!\!\!\int_{\p \M} \p_\nu w\overline{u}_1\ds \, \dd t.
\end{multline}
Taking (\ref{4.8}), (\ref{4.6}) into account, we deduce
\begin{multline}\label{4.9}
-\int_0^T\!\!\!\int_\M a(x)\p_tu_2\overline{u}_1(x,t)\dv\,\dd t 
=\int_0^T\!\!\!\int_{\p \M}\para{\Lambda_{a_1,q_1}-\Lambda_{q_2,q_2}} f_{h}(x,t)\overline{g}_h(x,t) \ds \, \dd t\cr
+\int_0^T\!\!\!\int_\M q(x)u_2\overline{u}_1\dv \, \dd t
\end{multline}
where $g_h$ is given by
$$
g_h(x,t)=(\theta_1\psi_{-a_1})(x,t)e^{i\varphi(x,t)/h},\quad (x,t)\in \Sigma.
$$
It follows from (\ref{4.9}), (\ref{4.6}) and (\ref{4.3}) that
\begin{multline}\label{4.10}
ih^{-1}\int_0^T\!\!\!\int_\M a(x) (\theta_2 \overline{\theta}_1)(x,t)(\psi_{a_2}\psi_{-a_1})( x,t)\dv\,\dd t =\cr
\int_0^T\!\!\!\int_{\p \M} \overline{g}_h\para{\Lambda_{a_1,q_1}-\Lambda_{a_2,q_2}}f_{h} \ds \, \dd t
-ih^{-1}\int_0^T\!\!\!\int_\M a(x) (\theta_2\psi_{a_2})(x,t)\overline{r}_{1,h}e^{i\varphi/h}\dv\dd t\cr
+\int_0^T\!\!\!\int_\M a(x)\p_t(\theta_2\psi_{a_2})(x,t)\overline{\theta}_1\psi_{a_1}(x,t)\dv\dd t
+\int_0^T\!\!\!\int_\M a(x)\p_t(\theta_2\psi_{a_2})(x,t)\overline{r}_{1,h}(t,x)e^{i\varphi/h}\dv\dd t\cr
+\int_0^T\!\!\!\int_\M a(x)\p_t r_{2,h}(\overline{\theta}_1\psi_{a_1})(x,t) e^{-i\varphi/h}\dv\dd t
+\int_0^T\!\!\!\int_\M a(x)\p_tr_{2,h}(x,t)\overline{r}_{1,h}(x,t)\dv\dd t\cr
+\int_0^T\!\!\!\int_\M q(x)u_2(x,t)\overline{u}_1(x,t)\dv dt \cr
= \int_0^T\!\!\!\int_{\p \M} \overline{g}_h \para{\Lambda_{a_1,q_1}-\Lambda_{a_2,q_2}}f_{h} \ds \, \dd t+\mathscr{R}_h.
\end{multline}
In view of (\ref{4.7}) and (\ref{4.4}), we have
\begin{equation}\label{4.11}
\abs{\mathscr{R}_h}\leq C\norm{\theta_1}_{\mathcal{V}(Q)}\norm{\theta_2}_{\mathcal{V}(Q)}.
\end{equation}
On the other hand, by the trace theorem, we find
\begin{eqnarray}\label{4.12}
\bigg|\int_0^T\!\!\!\int_{\p \M}\para{\Lambda_{a_1,q_1}-\Lambda_{a_2,q_2}}(f_{h}) \overline{g}_h \ds \, \dd t \bigg|
&\leq & \norm{\Lambda_{a_1,q_1}-\Lambda_{a_2,q_2}} \norm{f_h}_{H^{1}(\Sigma)}\norm{g_h}_{L^2(\Sigma)}\cr
&\leq & Ch^{-3}\norm{\theta_1}_{\mathcal{V}(Q)}\norm{\theta_2}_{\mathcal{V}(Q)}\norm{\Lambda_{a_1,q_1}
-\Lambda_{a_2,q_2}}.
\end{eqnarray}
The estimate (\ref{4.2}) follows easily from (\ref{4.10}), (\ref{4.11}) and (\ref{4.12}).\\
This completes the proof of the Lemma.
\end{proof}
\begin{lemma}\label{L.4.2} There exists $C>0$ such that for any $\Psi\in H^2 (S_y\M_{1})$, the following estimate
\begin{multline}\label{4.13}
\abs{\int_{S^+_{y}\M_1}\para{\exp\para{-\frac{1}{2}\I(a)(y,\xi)}-1}
\Psi(\xi)  \,  
\dd\omega_y(\xi)}\cr
 \leq C\para{h+h^{-2}\norm{\Lambda_{a_1,q_1}-\Lambda_{a_2,q_2}}} \norm{\Psi}_{H^2(S_y\M_{1})}.
\end{multline}
holds for any $y\in\p\M_1$.
\end{lemma}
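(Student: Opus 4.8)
The plan is to deduce the estimate \eqref{4.13} from Lemma~\ref{L.4.1} by choosing the geometric optics amplitudes $\theta_1,\theta_2$ so that the bilinear expression on the left of \eqref{4.2} becomes, after passing to polar normal coordinates centred at $y$, exactly the boundary integral in \eqref{4.13}. Fix $y\in\p\M_1$ and use the polar normal coordinates $x=\exp_y(r\xi)$, $r=d_\g(y,x)$, $\xi\in S^+_y\M_1$ of \eqref{3.6}, in which $\dv=\alpha^{1/2}(r,\xi)\,\dd r\,\dd\omega_y(\xi)$ with $\alpha$ the square of the volume element. Given $\Psi\in H^2(S_y\M_1)$, I would fix once and for all a real cutoff $\phi\in\mathcal{C}_0^\infty(0,\varepsilon)$ with $\norm{\phi}_{L^2(\R)}=1$ and take as amplitudes in \eqref{3.16} the angular profiles $\Psi_1\equiv 1$ and $\Psi_2$ equal to the restriction of $\Psi$ to $S^+_y\M_1$, i.e.
$$
\widetilde\theta_1(t,r,\xi)=\alpha^{-1/4}(r,\xi)\,\phi(t-r),\qquad \widetilde\theta_2(t,r,\xi)=\alpha^{-1/4}(r,\xi)\,\phi(t-r)\,\Psi(\xi).
$$
Both $\theta_1,\theta_2$ solve the transport equation \eqref{3.11}; since $\M_1$ is simple and $y\notin\overline{\M}$, the maps $x\mapsto r=d_\g(y,x)$ and $x\mapsto\xi$ are smooth on $\M$, so $\theta_j\in\mathcal{V}(Q)$ and one obtains the uniform bounds $\norm{\theta_1}_{\mathcal{V}(Q)}\leq C$ and $\norm{\theta_2}_{\mathcal{V}(Q)}\leq C\norm{\Psi}_{H^2(S_y\M_1)}$. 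Inserting these into the estimate \eqref{4.2} already yields the right-hand side of \eqref{4.13}, so the only thing left is to identify the left-hand side of \eqref{4.2} for this particular choice.

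For that I would pass to polar coordinates: the two weights $\alpha^{-1/4}$ cancel the Jacobian $\alpha^{1/2}$ in $\dv$, leaving
$$
\int_0^T\!\!\!\int_{\M}a(x)\,(\theta_2\overline{\theta}_1)(x,t)\,\psi_a(x,t)\,\dv\,\dd t=\int_{S^+_y\M_1}\Psi(\xi)\int_0^{\ell_+(y,\xi)}\widetilde a(r,\xi)\Bigl(\int_0^T\phi(t-r)^2\,\widetilde\psi_a(t,r,\xi)\,\dd t\Bigr)\dd r\,\dd\omega_y(\xi).
$$
By \eqref{4.1} and \eqref{3.22}, $\widetilde\psi_a(t,r,\xi)=\exp\bigl(-\tfrac12\int_0^t\widetilde a(r-s,\xi)\,\dd s\bigr)$. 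Since $a=a_1-a_2$ vanishes near $\p\M$ and $\M\subset\M_1^{\mathrm{int}}$, the function $\sigma\mapsto\widetilde a(\sigma,\xi)$ is supported in a compact subset of $(0,\ell_+(y,\xi))$ bounded away from the endpoints uniformly in $\xi$; in particular $\widetilde a(\sigma,\xi)=0$ for $\sigma\le 0$, so substituting $\sigma=r-s$ gives $\int_0^t\widetilde a(r-s,\xi)\,\dd s=\int_{r-t}^{r}\widetilde a(\sigma,\xi)\,\dd\sigma=\int_0^r\widetilde a(\sigma,\xi)\,\dd\sigma$ on the support of $\phi(t-r)$ (where $t>r$), a quantity independent of $t$. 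Moreover the condition $T>\mathrm{Diam}_\g(\M_1)+2\varepsilon$ from \eqref{3.5} guarantees $\mathrm{supp}\,\phi(\cdot-r)\subset(0,T)$ for every $r\in[0,\ell_+(y,\xi)]$, so the inner $t$-integral equals $\norm{\phi}_{L^2}^2=1$ times that $t$-independent exponential.

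It then remains to evaluate $\int_0^{\ell_+(y,\xi)}\widetilde a(r,\xi)\,\exp\bigl(-\tfrac12\int_0^r\widetilde a(\sigma,\xi)\,\dd\sigma\bigr)\,\dd r$; its integrand is $-2\,\frac{\dd}{\dd r}\exp\bigl(-\tfrac12\int_0^r\widetilde a(\sigma,\xi)\,\dd\sigma\bigr)$, and since $\int_0^{\ell_+(y,\xi)}\widetilde a(\sigma,\xi)\,\dd\sigma=\I(a)(y,\xi)$ by \eqref{2.6}, the integral equals $-2\bigl(\exp(-\tfrac12\I(a)(y,\xi))-1\bigr)$. Hence the left-hand side of \eqref{4.2} equals $-2\int_{S^+_y\M_1}\bigl(\exp(-\tfrac12\I(a)(y,\xi))-1\bigr)\Psi(\xi)\,\dd\omega_y(\xi)$, and combining this with Lemma~\ref{L.4.1} together with the amplitude bounds of the first paragraph gives \eqref{4.13}, with the factor $2$ absorbed into $C$ and all constants depending only on $\M_1$, $m_1$, $\eta$ and the fixed $\phi$ — in particular uniformly in $y\in\p\M_1$.

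I expect the main obstacle to be the uniform continuity estimate $\norm{\theta_j}_{\mathcal{V}(Q)}\leq C\norm{\Psi_j}_{H^2(S_y\M_1)}$: differentiating the composition $\alpha^{-1/4}(r(x),\xi(x))\,\phi(t-r(x))\,\Psi_j(\xi(x))$ up to twice in $x$ and three times in $t$ forces one to control $d_\g(y,\cdot)$ in $\mathcal{C}^3(\M)$ and the angular map $x\mapsto\xi(x)$ uniformly in $y\in\p\M_1$, which is precisely where the simplicity of $\M_1$ and $y\notin\overline{\M}$ enter (this is the Riemannian counterpart of the amplitude estimates of \cite{[BellDSSF]}). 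Everything else is elementary once the two cancellations above are in place: the cancellation of the $\alpha$-Jacobian, built into the weight $\alpha^{-1/4}$ of \eqref{3.16}, and the collapse of $\widetilde\psi_a$ to a $t$-independent exponential on $\mathrm{supp}\,\phi(t-r)$, which rests on $\mathrm{supp}\,\phi\subset(0,\varepsilon)$ together with $a\equiv 0$ near $\p\M$.
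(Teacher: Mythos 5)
Your proof is correct and follows essentially the same route as the paper: the same geometric optics amplitudes $\alpha^{-1/4}\phi(t-r)$ and $\alpha^{-1/4}\phi(t-r)\Psi(\xi)$ (with the roles of $\theta_1,\theta_2$ swapped, which only affects a complex conjugation of $\Psi$), the same passage to polar normal coordinates cancelling the Jacobian, and the same recognition of the integrand as a total derivative of $\exp\para{-\frac{1}{2}\int_0^{\,\cdot\,}\widetilde a}$ producing $\exp\para{-\frac{1}{2}\I(a)(y,\xi)}-1$. The only cosmetic difference is that you integrate the total derivative in $r$ after observing the exponent is $t$-independent on $\mathrm{supp}\,\phi(t-r)$, whereas the paper substitutes $\tau=t-r$ and integrates the total derivative in $t$; your explicit attention to the bounds $\norm{\theta_j}_{\mathcal{V}(Q)}\leq C\norm{\Psi_j}_{H^2(S_y\M_1)}$ is if anything more careful than the paper's.
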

We use the notation
    $$ S_y^+\M_{1} = \big\{\xi \in S_{y}\M_{1} : \langle \nu,\xi \rangle<0 \big\}. $$
\begin{proof}
We take two solutions to (\ref{3.11}) of the form
\begin{align*}
\widetilde{\theta}_1(t,r,\xi)&=\alpha^{-1/4}\phi(t-r)\Psi(\xi), \\
\widetilde{\theta}_2(t,r,\xi)&=\alpha^{-1/4}\phi(t-r).
\end{align*}
Now we change variable in the left term of (\ref{4.1}), $x=\exp_{y}(r\xi)$, $r>0$ and
$\xi\in S_{y}\M_1$, we have
\begin{multline}\label{4.14}
\int_0^T\!\!\int_\M a(x) (\overline{\theta}_1\theta_2)(x,t)\psi_{a}(x,t)  \dv \, \dd t \cr
=\int_0^T\!\!\int_{S^+_{y}\M_1}\!\int_0^{\ell_+(y,\xi)}\widetilde{a}(r,y,\xi)(\overline{\widetilde{\theta}}_1\widetilde{\theta}_2)( t,r,\xi)
\widetilde{\psi}_{a}( t,r,\xi)
\alpha^{1/2} \, \dd r \, \dd\omega_y(\xi) \, \dd t\cr
=\int_0^T\!\!\int_{S^+_{y}\M_1}\!\int_0^{\ell_+(y,\xi)}\widetilde{a}(r,y,\xi)\phi^2( t-r)\widetilde{\psi}_{a}( t,r,\xi)\Psi(\xi)  \, \dd r \,
\dd\omega_y(\xi)
\, \dd t\cr
=\int_0^T\!\!\int_{S^+_{y}\M_1}\!\int_\R \widetilde{a}( t-\tau,y,\xi)\phi^2(\tau)\widetilde{\psi}_{a}( t, t-\tau,\xi)\Psi(\xi) \, \dd \tau \,
\dd\omega_y(\xi)
\, \dd t\cr
=\int_0^T\!\!\int_{S^+_{y}\M_1}\!\int_\R \widetilde{a}( t-\tau,y,\xi)\phi^2(\tau)\exp\para{-\frac{1}{2}\int_0^{ t}\widetilde{a}(s-\tau,y,\xi)ds}
\Psi(\xi)  \, \dd \tau \,
\dd\omega_y(\xi)\cr
=2\int_\R\phi^2(\tau) \!\!\int_{S^+_{y}\M_1}\int_0^T \frac{d}{dt}\exp\para{-\frac{1}{2}\int_0^{ t}\widetilde{a}(s-\tau,y,\xi)ds}
\Psi(\xi)  \, \dd \tau \,
\dd\omega_y(\xi)\cr
=2\int_\R\phi^2(\tau) \!\!\int_{S^+_{y}\M_1}\cro{\exp\para{-\frac{1}{2}\int_0^{ T}\widetilde{a}(s-\tau,y,\xi)ds}-1}
\Psi(\xi)  \, \dd \tau \,
\dd\omega_y(\xi).
\end{multline}
By the support properties of the function $\phi$, we get that the left-hand side term in (\ref{4.14}) reads
\begin{multline*}
  \int_\R\phi^2(\tau) \!\!\int_{S^+_{y}\M_1}\cro{\exp\para{-\frac{1}{2}\int_0^{T}\widetilde{a}(s-\tau,y,\xi)ds}-1}
\Psi(\xi)  \, \dd \tau \,
\dd\omega_y(\xi)=\cr
\int_{S^+_{y}\M_1}\cro{\exp\para{-\frac{1}{2}\int_0^{\ell_+(y,\xi)}\widetilde{a}(s,y,\xi)ds}-1}
\Psi(\xi) \mu(y,\xi) \, 
\dd\omega_y(\xi).
\end{multline*}
Then, by (\ref{4.14}) and (\ref{4.2}) we get
\begin{multline}\label{4.15}
\abs{\int_{S^+_{y}\M_1}\para{\exp\para{-\frac{1}{2}\I(a)(y,\xi)}-1}
\Psi(\xi)  \,  
\dd\omega_y(\xi)}\cr
 \leq C\para{h+h^{-2}\norm{\Lambda_{a_1,q_1}-\Lambda_{a_2,q_2}}} \norm{\Psi}_{H^2(S_y\M_{1})}.
\end{multline}
This completes the proof of the Lemma.
\end{proof}
\subsection{End of the proof of the stability estimate of the absorption coefficient}
Let us now complete the proof of the stability estimate of the absorption coefficient. 
\smallskip

We define the Poisson kernel of $B(0,1)\subset T_y \M_1$, i.e.,
$$
P(\theta,\xi)=\frac{1-\abs{\theta}^2}{\alpha_n\abs{\theta-\xi}^n},\quad \theta\in B(0,1);\,\, \xi\in S_y\M_1.
$$
For $0<\kappa<1$, we define $\Psi_\kappa: S_y\M_1\times S_y\M_1\to \R$ as
\begin{align}\label{4.16}
\Psi_\kappa(\theta,\xi)=P(\kappa\theta,\xi).
\end{align}
We have the following Lemma (see Appendix A for the proof).
\begin{lemma}\label{L.4.3}
Let $\Psi_\kappa$ given by (\ref{4.16}), $\kappa \in (0,1)$. Then we have the following properties:
\begin{equation}\label{4.17}
\displaystyle 0\leq\Psi_\kappa(\theta,\xi)\leq \frac{2}{\alpha_n(1-\kappa)^{n-1}},\quad \forall\, \kappa\in (0,1),\,\forall\,\xi,\theta \in S_y\M_1.
\end{equation}  
\begin{equation}\label{4.18}
 \displaystyle \int_{S_y\M_1}\Psi_\kappa(\theta,\xi) d\omega_y(\xi)=1,\quad \forall \kappa\in (0,1),\,\forall\,\theta \in S_y\M_1.
\end{equation} 
\begin{equation}\label{4.19}
\int_{S_y\M_1}\Psi_\kappa(\theta,\xi) \abs{\theta-\xi}d\omega_y(\xi)\leq C(1-\kappa)^{1/2n}, \quad \forall\, \kappa\in (0,1),\,\forall\,\theta \in S_y\M_1.
\end{equation}
\begin{equation}\label{4.20}
\norm{ \Psi_\kappa(\theta,\cdot)}_{H^2(S_y\M_1)}^2\leq \frac{C}{(1-\kappa)^{n+3}}, \quad \forall \kappa\in (0,1),\,\forall\,\theta \in S_y\M_1.
\end{equation}
\end{lemma}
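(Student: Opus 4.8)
The plan is to reduce all four estimates to scalar integrals on a round sphere. Fix $\g_y$-orthonormal coordinates on $T_y\M_1$; then $S_y\M_1$ is the Euclidean unit sphere, $\abs{\,\cdot\,}$ is the Euclidean norm, $\dd\omega_y$ is the standard surface measure, and $\Psi_\kappa(\theta,\xi)=P(\kappa\theta,\xi)$ is precisely the classical Poisson kernel of the unit ball evaluated at the interior point $\kappa\theta$ (the constant $\alpha_n$ being the surface area of $S_y\M_1$, which is exactly what normalises \eqref{4.18}). The computational backbone is the elementary identity
\[
\abs{\kappa\theta-\xi}^2=(1-\kappa)^2+\kappa\,\abs{\theta-\xi}^2,\qquad \theta,\xi\in S_y\M_1,
\]
obtained from $\abs{\theta}=\abs{\xi}=1$ and $\seq{\theta,\xi}=1-\tfrac12\abs{\theta-\xi}^2$. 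Writing $\delta=1-\kappa$ and using the co-area formula in the variable $t=\abs{\theta-\xi}\in[0,2]$ (the induced measure being $\lesssim t^{n-2}\,\dd t$, with the antipodal region $t\to2$ causing no trouble since there $\abs{\kappa\theta-\xi}\ge1$), every integral below becomes a one-dimensional integral in $t$ whose $\delta$-behaviour is read off with the substitution $t=\delta s$. With this in hand, \eqref{4.17} is immediate: the numerator of $\Psi_\kappa(\theta,\xi)$ is $1-\kappa^2=(1-\kappa)(1+\kappa)\le 2(1-\kappa)$, while $\abs{\kappa\theta-\xi}\ge\abs{\xi}-\kappa\abs{\theta}=1-\kappa$, so $\Psi_\kappa(\theta,\xi)\le 2/(\alpha_n(1-\kappa)^{n-1})$, and positivity is clear. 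Likewise \eqref{4.18} is the classical fact that the Poisson integral of the constant $1$ is the harmonic function $\equiv 1$, so $\int_{S_y\M_1}P(\kappa\theta,\xi)\,\dd\omega_y(\xi)=1$; alternatively it follows directly from the change of variables above.

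For \eqref{4.19} I would split the integral at $\abs{\theta-\xi}=\delta^\beta$, where $\beta=1-\tfrac1n+\tfrac1{2n^2}\in(0,1)$. On the cap $\{\abs{\theta-\xi}\le\delta^\beta\}$ I bound $\Psi_\kappa$ by the pointwise estimate \eqref{4.17} and integrate $\abs{\theta-\xi}$, getting a contribution $\lesssim \delta^{-(n-1)}\int_0^{\delta^\beta}t\cdot t^{n-2}\,\dd t\lesssim\delta^{\,n\beta-(n-1)}=\delta^{1/(2n)}$. On $\{\abs{\theta-\xi}>\delta^\beta\}$ the key identity gives $\abs{\kappa\theta-\xi}^2\ge\kappa\abs{\theta-\xi}^2$, hence $\Psi_\kappa(\theta,\xi)\lesssim\delta\,\abs{\theta-\xi}^{-n}$ (we may assume $\kappa\ge\tfrac12$, the range $\kappa<\tfrac12$ being trivial as everything is then bounded), and the contribution is $\lesssim\delta\int_{\delta^\beta}^{2}t\cdot t^{-n}\cdot t^{n-2}\,\dd t=\delta\int_{\delta^\beta}^{2}\tfrac{\dd t}{t}\lesssim\delta\,\abs{\log\delta}=o(\delta^{1/(2n)})$. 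Adding the two pieces yields \eqref{4.19}.

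For \eqref{4.20}, differentiating $\xi\mapsto\Psi_\kappa(\theta,\xi)=\alpha_n^{-1}(1-\kappa^2)\abs{\kappa\theta-\xi}^{-n}$ tangentially along $S_y\M_1$ up to order two produces, by the chain rule (the Christoffel symbols and curvature of the round sphere contributing only bounded factors absorbed into the constant), a finite sum of terms each bounded by $C(1-\kappa^2)\abs{\kappa\theta-\xi}^{-(n+2)}$. Hence
\[
\norm{\Psi_\kappa(\theta,\cdot)}_{H^2(S_y\M_1)}^2\le C(1-\kappa^2)^2\int_{S_y\M_1}\abs{\kappa\theta-\xi}^{-2(n+2)}\,\dd\omega_y(\xi).
\]
Using the key identity and the substitution $t=\delta s$ (with $\kappa\ge\tfrac12$), the integral is $\lesssim\delta^{-(n+5)}\int_0^{\infty}s^{n-2}(1+s^2)^{-(n+2)}\,\dd s$, and the last integral converges since its integrand decays like $s^{-(n+6)}$; since $(1-\kappa^2)^2\le 4\delta^2$ this gives $\norm{\Psi_\kappa(\theta,\cdot)}_{H^2(S_y\M_1)}^2\lesssim\delta^{2-(n+5)}=(1-\kappa)^{-(n+3)}$, which is \eqref{4.20}. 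In every step the implied constants depend only on $n$ and on the metric $\g$ through the fixed geometry of $S_y\M_1$.

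The step I expect to be the most delicate is \eqref{4.19}: the difficulty is not analytic but one of bookkeeping, since the splitting radius $\delta^\beta$ must be chosen so that the ``near'' contribution $\delta^{\,n\beta-(n-1)}$ and the logarithmically corrected ``far'' contribution $\delta\abs{\log\delta}$ are simultaneously $\lesssim(1-\kappa)^{1/2n}$ — the exponent $\beta=1-\tfrac1n+\tfrac1{2n^2}$ being dictated exactly by this balance. Everything else reduces routinely, if carefully, to scalar integrals via $\abs{\kappa\theta-\xi}^2=(1-\kappa)^2+\kappa\abs{\theta-\xi}^2$.
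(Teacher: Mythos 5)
Your proof is correct, and all four estimates are established by valid arguments; the overall strategy (reduce to the Euclidean Poisson kernel in $\g_y$-orthonormal coordinates, use the normalization, split near/far for \eqref{4.19}, and bound two tangential derivatives pointwise for \eqref{4.20}) matches the paper's. The execution differs in two places, in ways worth noting. For \eqref{4.19} the paper first writes $\abs{\theta-\xi}\leq\abs{\kappa\theta-\xi}+(1-\kappa)$ and splits at $\abs{\kappa\theta-\xi}=(1-\kappa)^{1/2n}$: on the near set it bounds the extra factor by $(1-\kappa)^{1/2n}$ and invokes the normalization \eqref{4.18} to kill the remaining integral, while on the far set the pointwise bound $\Psi_\kappa\leq 2(1-\kappa)^{(n+1)/2n}/(\alpha_n\abs{\kappa\theta-\xi}^{n-1})\cdot\abs{\kappa\theta-\xi}^{-1}\cdot\abs{\kappa\theta-\xi}$ yields $C(1-\kappa)^{1/2}$; no co-area computation or tuned exponent $\beta$ is needed, and no logarithm appears. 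Your version, splitting at $\abs{\theta-\xi}=\delta^{\beta}$ with $\beta=1-\tfrac1n+\tfrac1{2n^2}$ and integrating in the chordal variable, reaches the same exponent $1/(2n)$ at the cost of more bookkeeping (including the integrable endpoint singularity of the chordal measure at $t=2$ when $n=2$, which you correctly flag). Similarly for \eqref{4.20} the paper avoids your scaling computation by writing $(1-\kappa^2)^2\abs{\kappa\theta-\xi}^{-2(n+2)}\leq(1-\kappa^2)(1-\kappa)^{-(n+4)}\cdot(1-\kappa^2)\abs{\kappa\theta-\xi}^{-n}$ and again using \eqref{4.18} to evaluate the remaining integral as $\alpha_n$; your direct evaluation via $\abs{\kappa\theta-\xi}^2=(1-\kappa)^2+\kappa\abs{\theta-\xi}^2$ and $t=\delta s$ gives the same power $(1-\kappa)^{-(n+3)}$. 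In short, the paper systematically recycles the identity \eqref{4.18} to shortcut the integrals, whereas you compute them; both routes are sound and yield identical exponents.
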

\begin{lemma}\label{L.4.4}
 Let $a_{i}\in \mathscr{A}(m_{1},\alpha)$, $q_i \in \mathscr{Q}(m_2)$, $i=1,\,2$. There exist $C>0,$
$\delta>0$, $\beta>0$ and $h_{0}>0$ such that 
\begin{equation}\label{4.21}
|\I(a)(y,\theta)|\leq C \Big( h^{-\delta}\|\Lambda_{a_{2},q_{2}}-\Lambda_{a_{1},q_{1}}\|+h^{\beta}  \Big),\quad\forall \, (y,\theta)\in\p_+S\M_1,
\end{equation}
for any $h\leq h_{0}$. Here $C$ depends only on $\M$, $T$,
$m_{1}$ and $m_{2}$.
\end{lemma}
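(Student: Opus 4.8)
The idea is to recover a pointwise bound on the geodesic ray transform $\I(a)(y,\theta)$ by testing the inequality \eqref{4.13} against the family $\Psi_\kappa(\theta,\cdot)$, which acts as an approximate identity on the sphere $S_y\M_1$ as $\kappa\to 1$. First I would observe that since $a_1=a_2$ and $q_1=q_2$ near $\p\M$, the function $a$ vanishes on $\M_1\setminus\M$, hence $\I(a)(y,\xi)$ is bounded (by $C m_1$, say) uniformly in $(y,\xi)\in\p_+S\M_1$, and is a $\mathcal{C}^1$ (indeed Lipschitz) function of $\xi$ with a modulus of continuity controlled by $\|a\|_{W^{1,\infty}}\le C m_1$. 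Consequently $\exp(-\tfrac12\I(a)(y,\xi))-1$ is also Lipschitz in $\xi$ with constant $\le C$, and vanishes exactly when $\I(a)(y,\xi)=0$.

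\textbf{Second, the main computation.} Fix $(y,\theta)\in\p_+S\M_1$ and apply \eqref{4.13} with $\Psi(\xi)=\Psi_\kappa(\theta,\xi)$. The delicate point is that $\Psi_\kappa(\theta,\cdot)$ is supported (effectively concentrated) on the whole sphere, whereas \eqref{4.13} integrates only over the hemisphere $S_y^+\M_1$; I would handle this either by noting $\Psi_\kappa\ge 0$ together with the parity/extension conventions already set up, or more directly by writing the bound for the function $\theta\mapsto\I(a)(y,\theta)$ extended oddly/evenly as appropriate so that the hemisphere integral captures the full mass. Using \eqref{4.18} to write $\exp(-\tfrac12\I(a)(y,\theta))-1$ as its own average against $\Psi_\kappa(\theta,\cdot)$, then splitting
\[
\Big|\big(e^{-\frac12\I(a)(y,\theta)}-1\big)-\!\!\int \big(e^{-\frac12\I(a)(y,\xi)}-1\big)\Psi_\kappa(\theta,\xi)\,\dd\omega_y(\xi)\Big|
\le C\!\!\int |\theta-\xi|\,\Psi_\kappa(\theta,\xi)\,\dd\omega_y(\xi)\le C(1-\kappa)^{1/2n},
\]
by the Lipschitz bound and \eqref{4.19}. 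The remaining averaged term is estimated by \eqref{4.13} with $\|\Psi_\kappa(\theta,\cdot)\|_{H^2}\le C(1-\kappa)^{-(n+3)/2}$ from \eqref{4.20}. This yields
\[
\big|e^{-\frac12\I(a)(y,\theta)}-1\big|\le C(1-\kappa)^{1/2n}+C(1-\kappa)^{-(n+3)/2}\big(h+h^{-2}\|\Lambda_{a_1,q_1}-\Lambda_{a_2,q_2}\|\big).
\]

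\textbf{Third, optimize and unwind the exponential.} Choosing $1-\kappa=h^{\mu}$ for a suitable small $\mu>0$ balances the two error terms and produces a bound of the form $|e^{-\frac12\I(a)(y,\theta)}-1|\le C(h^{\beta}+h^{-\delta}\|\Lambda_{a_1,q_1}-\Lambda_{a_2,q_2}\|)$ for appropriate $\beta,\delta>0$ depending only on $n$. Since $\I(a)(y,\theta)$ ranges over a bounded set (by the first step) on which $s\mapsto e^{-s/2}-1$ has a lower bound on its derivative's magnitude away from where it vanishes, we have $|\I(a)(y,\theta)|\le C|e^{-\frac12\I(a)(y,\theta)}-1|$, giving \eqref{4.21}. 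Note the bound must be made uniform in $(y,\theta)$, which is automatic because every constant above depends only on $m_1$ (through the Lipschitz bound on $a$) and on $\M_1,T,n$, not on the point.

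\textbf{The main obstacle} I anticipate is the bookkeeping around the hemisphere $S_y^+\M_1$ versus the full sphere $S_y\M_1$: the Poisson-kernel mollifier $\Psi_\kappa$ lives on all of $S_y\M_1$ and its defining properties \eqref{4.17}--\eqref{4.20} are stated there, while the testing identity \eqref{4.13} only sees the inner hemisphere. Making the averaging argument rigorous requires either extending $\xi\mapsto\I(a)(y,\xi)$ consistently to the outer hemisphere (e.g. by the relation $\ell_+(y,\xi)=-\ell_-(y,-\xi)$ and reversing geodesics, so that $\I(a)$ has a well-defined even or odd extension), or restricting attention to $\theta$ in the interior hemisphere and arguing that $\kappa\theta$ stays in a region where the outer-hemisphere contribution to the Poisson integral is negligible as $\kappa\to 1$. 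Once that technical point is pinned down the rest is the routine optimization above.
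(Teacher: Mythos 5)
Your proposal is correct and follows essentially the same route as the paper: test Lemma \ref{L.4.2} against the Poisson-kernel mollifier $\Psi_\kappa(\theta,\cdot)$, use the normalization \eqref{4.18} together with the Lipschitz continuity of $\xi\mapsto\I(a)(y,\xi)$ and \eqref{4.19} for the near-diagonal term, \eqref{4.20} for the $H^2$-norm of the test function, optimize $1-\kappa$ as a power of $h$, and unwind the exponential via $|X|\le e^M|e^X-1|$. The hemisphere issue you flag is resolved in the paper simply by extending $\I(a)$ by zero to $\p_-S\M$, which is one of the options you propose.
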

\begin{proof}
Let $(y,\theta)\in\p_+S\M_1$ be a fixed and let $\Psi_\kappa$ be the positive function given by (\ref{4.16}). We extend $\I(a)$ by zero in $\p_-S\M$, then,  we have
\begin{multline}\label{4.22}
\Big|\exp\para{-\displaystyle\frac{1}{2}\,\I(a)(y,\theta)} -1 \Big|=\Big|
\displaystyle\int_{S_y\M_1}\Psi_{\kappa}(\theta,\xi)\Big[ \exp\Big(-\frac{1}{2}\, \I(a)(y,\theta) \Big)-1  \Big]\,d\omega_y(\xi)\Big|\cr
\leq\quad \Big| \displaystyle\int_{S_y\M_1}\Psi_{\kappa}(\theta,\xi)\Big[\exp\Big( -\frac{1}{2}\,\I(a)(y,\theta) \Big)
-\exp\Big(-\displaystyle\frac{1}{2}\,\I(a)(y,\xi)\Big)   \Big]  \,d\omega_y(\xi)\Big|\cr
 +\Big| \displaystyle\int_{S_y\M_1} \Psi_{\kappa}(\theta,\xi)\Big[\exp\Big(-\displaystyle\frac{1}{2}\,\I(a)(y,\xi)  \Big)-1 \Big]
 d\omega_y(\xi)\Big|.
\end{multline}
Therefore, since we have
$$\begin{array}{lll} \Big| \!\exp\Big(
\!-\displaystyle\frac{1}{2}\,\I(a)(y,\theta)\Big)
\!-\!\exp\Big(\! -\displaystyle\frac{1}{2}\,\I(a)(y,\xi)
\! \Big) \Big|\!\!\!&\leq&\!\!\! C\Big|\displaystyle \I(a)(y,\theta)-\I(a)(y,\xi)\Big|,\cr
\end{array}$$
and using the fact that
$$
\Big|\displaystyle \I(a)(y,\theta)-\I(a)(y,\xi)\Big|\leq C \,|\theta-\xi|,
$$
 we deduce upon applying Lemma \ref{L.4.2} with $\Psi=\Psi_{\kappa}(\theta,\cdot)$ the following estimation
\begin{multline*}
\Big|\exp\Big(-\frac{1}{2}\,\I(a)(y,\theta)\Big)-1
\Big|\leq \cr
C\int_{S_y\M_1}\Psi_{\kappa}(\theta,\xi)\,|\theta-\xi|\,d\omega_y(\xi)+C\Big(
h^{-2}\|\Lambda_{a_{2},q_{2}}-\Lambda_{a_{1},q_{1}}\|+h
\Big)\|\Psi_{\kappa}\|_{H^2(S_y\M_1)}^{2}.
\end{multline*}
 On the other hand, by (\ref{4.20}) and (\ref{4.19}), we have the following inequality
$$
 \Big|\exp\Big(-\frac{1}{2}\, \I(a)(y,\theta)\Big)-1
\Big|\leq C \,(1-\kappa)^{1/2n}+C\Big(
h^{-2}\|\Lambda_{a_{2},q_{2}}-\Lambda_{a_{1},q_{1}}\|+h
\Big)(1-\kappa)^{-(3+n)}.
 $$
 Selecting $(1-\kappa)$ small such that $(1-\kappa)^{1/2n}=h(1-\kappa)^{-(n+3)}$,
that is $(1-\kappa)=h^{2n/1+2n^2+6n}$, we find two constants $\delta>0$ and $\beta>0$
such that
$$\Big| \exp \Big( -\frac{1}{2}\, \I(a)(y,\theta) \Big)-1 \Big|\leq
C \Big[
h^{-\delta}\|\Lambda_{a_{2},q_{2}}-\Lambda_{a_{1},q_{1}}\|+h^{\beta}
\Big].$$
 Now, using the fact that $|X|\leq e^{M}\,|e^{X}-1|$ for any $|X|\leq M$, we
deduce that
$$\Big| -\frac{1}{2}\, \I(a)(y,\theta) \Big|\leq e^{M_{1}T}\Big| \exp\Big( -\frac{1}{2}\, \I(a)(y,\theta)  \Big)-1
\Big|.$$ Hence, we conclude that for all $\theta\in S_y\M_1$ and
$y\in $ we have
$$
\Big|\I(a)(y,\theta) \Big|\leq C \Big(h^{-\delta}\|\Lambda_{a_{2},q_{2}}-\Lambda_{a_{1},q_{1}}\|+h^{\beta}\Big).
$$ 
The proof of Lemma \ref{L.4.4} is complete.
\end{proof}
Integrating the estimate (\ref{4.21}) over $\p_+S\M_1,$ with respect to $ \mu(y,\theta) \,  
\dss (y,\theta)$, then minimizing on $h$, we get 
\begin{equation}\label{4.23}
\|\I(a)\|_{L^2(\p_+S\M_1)}\leq C \|\Lambda_{a_{2},q_{2}}-\Lambda_{a_{1},q_{1}}\|^{\frac{\beta}{\beta+\delta}}.
\end{equation}
With respect to the Theorem \ref{theorem}, we have 
\begin{equation}\label{4.24}
\|a\|_{L^2(\M)}\leq C \|\I(a)\|_{H^1(\p_+S\M_1)}.
\end{equation}
By  interpolation inequality and (\ref{2.7}), we have
\begin{align}\label{4.25}
\|\I(a)\|^2_{H^1(\p_+S\M_1)} & \leq C \|\I(a)\|_{L^2(\p_+S\M_1)}\|\I(a)\|_{H^2(\p_+S\M_1)} \cr &  \leq C \|\I(a)\|_{L^2(\p_+S\M_1)}.
\end{align}
Combining with (\ref{4.24}) and (\ref{4.25}), we get 
$$
\|a\|_{L^2(\M)}\leq C \|\Lambda_{a_{2},q_{2}}-\Lambda_{a_{1},q_{1}}\|^{s_0},
$$
where $s_0=\frac{\beta}{2(\beta+\delta)}$.\\ 
Moreover, let $\eta_0\in (n/2,\eta)$, by Sobolev emedding and interpolation inequality, there exists $\delta\in(0,1)$ such that 
\begin{align}\label{4.26}
\|a\|_{\mathcal{C}^0(\M)}\leq\norm{a}_{H^{\eta_0}(\M)}\leq C \|a\|^{\delta}_{L^2(\M)}\|a\|^{1-\delta}_{H^\eta(\M)}\leq C \|a\|^\delta_{L^2(\M)}\leq C \|\Lambda_{a_{2},q_{2}}-\Lambda_{a_{1},q_{1}}\|^{\delta s_0}.
\end{align}
 This completes the proof of the H\"older stability estimate of the absorption coefficient.
\section{Stable determination of the electric potential}
\setcounter{equation}{0}
In this section, we prove a stability estimate for the  electric potential $q$. We use the stability result obtained for  the absorption coefficient $a$. Like in the previous section,  we let $a_1,\,a_2\in\mathscr{A}(m_1,\eta)$ and $q_1,q_2\in\mathscr{Q}(m_2)$ such that $a_1=a_2$, $q_1=q_2$ near  the boundary $\p\M$. We set
$$
a(x)=(a_1-a_2)(x),\quad q(x)=(q_1-q_2)(x).
$$
Recall that we have extended $a_{1},a_{2}$ as $W^{2,\infty}(\M_1)$ in such a way that $a=0$ and $q=0$ on $\M_{1} \setminus \M$.\\
We denote $\psi_{a_2}\in\mathcal{W}(Q)$ and $\psi_{-a_1}\in \mathcal{W}(Q)$ the solutions of (\ref{3.12}) respectively with $a=a_2$ and $a=-a_1$ given by (\ref{3.22}), and set
\begin{equation}\label{5.1}
\psi_a(x,t)=\psi_{a_2}(x,t)\psi_{-a_1}(x,t).
\end{equation} 
We have a preleminary estimate.
\begin{lemma}\label{L.5.1}
Let $T>0$. There exist $C>0$ such that for any $\theta_j\in \mathcal{V}(Q)$, $j=1,2$, 
satisfying the transport equation (\ref{3.11}), the following estimate holds true:
\begin{equation}\label{5.2}
\abs{\int_{0}^T\!\!\!\!\int_{\M}q(x)(\theta_2\overline{\theta}_1)(x,t)\,\dv \, \dd t } 
\leq C\para{h+h^{-1} \|a\|_{\mathcal{C}^0(\M)}+h^{-3}\norm{\Lambda_{a_1,q_1}-\Lambda_{a_2,q_2}}}\norm{\theta_1}_{\mathcal{V}(Q)}\norm{\theta_2}_{\mathcal{V}(Q)}
\end{equation}
for all $h\in (0,h_0)$.
\end{lemma}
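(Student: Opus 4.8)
The plan is to run the scheme from the proof of Lemma~\ref{L.4.1} once more, but now retaining the term carrying $q$ as the principal one instead of the one carrying $a$. For $h<h_0$ take the geometric optics solution $u_2=(\theta_2\psi_{a_2})e^{i\varphi/h}+r_{2,h}$ supplied by Lemma~\ref{L.3.2} for the coefficients $(a_2,q_2)$, with remainder obeying \eqref{4.4}; set $f_h=u_2|_\Sigma$, let $v$ solve the non-homogeneous problem \eqref{4.5} with coefficients $(a_1,q_1)$ and boundary value $f_h$, and put $w=v-u_2$. As in the proof of Lemma~\ref{L.4.1}, Lemma~\ref{L.1.1} gives $w\in\mathcal{C}^1(0,T;L^2(\M))\cap\mathcal{C}(0,T;H^2(\M)\cap H_0^1(\M))$ solving $(\p_t^2-\Opf)w=a(x)\p_tu_2+q(x)u_2$ with vanishing Cauchy data and $w|_\Sigma=0$. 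Let $u_1=(\theta_1\psi_{-a_1})e^{i\varphi/h}+r_{1,h}$ be the backward geometric optics solution constructed exactly as in the proof of Lemma~\ref{L.4.1} (cf. Lemma~\ref{L.3.3}) for the coefficients $-a_1,q_1$, with remainder obeying \eqref{4.7}. Integrating by parts by Green's formula \eqref{3.4} and rearranging \eqref{4.9} yields
\begin{equation*}
\int_0^T\!\!\!\int_\M q(x)\,u_2\overline{u}_1\,\dv\,\dd t=-\int_0^T\!\!\!\int_\M a(x)\,\p_tu_2\,\overline{u}_1\,\dv\,\dd t-\int_0^T\!\!\!\int_{\p\M}\para{\Lambda_{a_1,q_1}-\Lambda_{a_2,q_2}}f_h\,\overline{g}_h\,\ds\,\dd t,
\end{equation*}
with $g_h=(\theta_1\psi_{-a_1})e^{i\varphi/h}$ on $\Sigma$.

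Next I would insert the geometric optics expansions into both sides. Since $\varphi=\varrho-t$ gives $\p_t\varphi=-1$, one has $\p_tu_2=-\tfrac{i}{h}(\theta_2\psi_{a_2})e^{i\varphi/h}+\p_t(\theta_2\psi_{a_2})e^{i\varphi/h}+\p_tr_{2,h}$, so that, using $e^{i\varphi/h}\,\overline{e^{i\varphi/h}}=1$, the reality of $\psi_{a_2}$ and $\psi_{-a_1}$, the remainder bounds \eqref{4.4}, \eqref{4.7}, and $\norm{a}_{L^\infty(\M)}\le Cm_1$,
\begin{equation*}
-\int_0^T\!\!\!\int_\M a(x)\,\p_tu_2\,\overline{u}_1\,\dv\,\dd t=\frac{i}{h}\int_0^T\!\!\!\int_\M a(x)\,(\theta_2\overline{\theta}_1)\,\psi_a\,\dv\,\dd t+\mathscr{R}'_h,\quad |\mathscr{R}'_h|\le C\norm{a}_{\mathcal{C}^0(\M)}\norm{\theta_1}_{\mathcal{V}(Q)}\norm{\theta_2}_{\mathcal{V}(Q)},
\end{equation*}
with $\psi_a=\psi_{a_2}\psi_{-a_1}$ as in \eqref{5.1}, whereas the left-hand side, which carries no negative power of $h$, satisfies
\begin{equation*}
\int_0^T\!\!\!\int_\M q(x)\,u_2\overline{u}_1\,\dv\,\dd t=\int_0^T\!\!\!\int_\M q(x)\,(\theta_2\overline{\theta}_1)\,\psi_a\,\dv\,\dd t+\mathscr{R}''_h,\quad |\mathscr{R}''_h|\le Ch\norm{\theta_1}_{\mathcal{V}(Q)}\norm{\theta_2}_{\mathcal{V}(Q)}.
\end{equation*}

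To conclude I would replace $\psi_a$ by $1$ in the leading $q$-term. Since $\psi_{a_2}$ and $\psi_{-a_1}$ solve \eqref{3.12} with $a_2$ and $-a_1$, the product $\psi_a$ is real, equals $1$ at $t=0$, is uniformly bounded on $Q$, and along characteristics $\log\psi_a$ is a time integral (over a length $\le T$) of $a/2$; hence $|\psi_a-1|\le C(T,m_1)\norm{a}_{\mathcal{C}^0(\M)}$ on $Q$ and
\begin{equation*}
\Big|\int_0^T\!\!\!\int_\M q(x)\,(\theta_2\overline{\theta}_1)(\psi_a-1)\,\dv\,\dd t\Big|\le C\norm{a}_{\mathcal{C}^0(\M)}\norm{\theta_1}_{\mathcal{V}(Q)}\norm{\theta_2}_{\mathcal{V}(Q)}.
\end{equation*}
The Dirichlet-to-Neumann term is controlled by the trace estimate already used in \eqref{4.12}, namely $\norm{f_h}_{H^1(\Sigma)}\norm{g_h}_{L^2(\Sigma)}\le Ch^{-3}\norm{\theta_1}_{\mathcal{V}(Q)}\norm{\theta_2}_{\mathcal{V}(Q)}$, giving the bound $Ch^{-3}\norm{\Lambda_{a_1,q_1}-\Lambda_{a_2,q_2}}\norm{\theta_1}_{\mathcal{V}(Q)}\norm{\theta_2}_{\mathcal{V}(Q)}$. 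Combining the four displays, solving for $\int_0^T\!\int_\M q(x)(\theta_2\overline{\theta}_1)\,\dv\,\dd t$, and absorbing $\norm{a}_{\mathcal{C}^0(\M)}\le h^{-1}\norm{a}_{\mathcal{C}^0(\M)}$ for $h\le h_0<1$, yields \eqref{5.2}.

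The one genuinely new feature — and the reason \eqref{5.2} carries $h^{-1}\norm{a}_{\mathcal{C}^0(\M)}$ rather than merely $\norm{a}_{\mathcal{C}^0(\M)}$ — is the term $\tfrac{i}{h}\int_0^T\!\int_\M a(x)(\theta_2\overline{\theta}_1)\psi_a\,\dv\,\dd t$: in Lemma~\ref{L.4.1} it was precisely the quantity being estimated, while here it is an unavoidable error of size $h^{-1}\norm{a}_{\mathcal{C}^0(\M)}$ that must simply be carried along; this is harmless for what follows, since Section~4 already bounds $\norm{a}_{\mathcal{C}^0(\M)}$ by a power of $\norm{\Lambda_{a_1,q_1}-\Lambda_{a_2,q_2}}$. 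The remaining work is routine bookkeeping of powers of $h$ in the remainder terms, all controlled by \eqref{4.4} and \eqref{4.7}; the only technical point needing care is the justification of the integration by parts under the limited regularity of $w$ and $u_1$, which is exactly what Lemmas~\ref{L.1.1} and \ref{L.3.2}, \ref{L.3.3} secure.
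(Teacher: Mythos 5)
Your proposal is correct and follows essentially the same route as the paper: both start from the Green's-formula identity already established in the proof of Lemma~\ref{L.4.1} (equation \eqref{4.10}), isolate the $q$-term instead of the $a$-term, absorb the now-parasitic term $ih^{-1}\int a\,(\theta_2\overline{\theta}_1)\psi_a$ into an error of size $h^{-1}\norm{a}_{\mathcal{C}^0(\M)}$, replace $\psi_a$ by $1$ at cost $O(\norm{a}_{\mathcal{C}^0(\M)})$ via $|\psi_a-1|\leq C\norm{a}_{\mathcal{C}^0(\M)}$, and control the Dirichlet-to-Neumann term by the same trace estimate with the $h^{-3}$ loss. Your bookkeeping of the remainders (in particular that every term in $\mathscr{R}'_h$ carries a factor of $a$) is consistent with, and if anything slightly sharper than, the paper's estimate \eqref{5.4}.
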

\begin{proof}
From the equality (\ref{4.10}) and the expressions (\ref{4.6}) and (\ref{4.3}), it follows  that
\begin{multline}\label{5.3}
\int_0^T\!\!\!\int_\M q(x)(\theta_2\overline{\theta}_1)(x,t)\psi_{a}(x,t)\,\dv dt = -
\int_0^T\!\!\!\int_{\p \M} \overline{g}_h\para{\Lambda_{a_1,q_1}-\Lambda_{a_2,q_2}}f_{h} \ds \, \dd t
\cr +ih^{-1}\int_0^T\!\!\!\int_\M a(x) (\theta_2 \overline{\theta}_1)(x,t)(\psi_{a_2}\psi_{-a_1})( x,t)\dv\,\dd t +ih^{-1}\int_0^T\!\!\!\int_\M a(x) (\theta_2\psi_{a_2})(x,t))\overline{r}_{1,h}e^{i\varphi/h}\dv\dd t\cr
-\int_0^T\!\!\!\int_\M a(x)\p_t(\theta_2\psi_{a_2})(x,t)(\overline{\theta}_1\psi_{a_1})(x,t)\dv\dd t
-\int_0^T\!\!\!\int_\M a(x)\p_t(\theta_2\psi_{a_2})(x,t)\overline{r}_{1,h}(x,t)e^{i\varphi/h}\dv\dd t\cr
-\int_0^T\!\!\!\int_\M a(x)\p_t r_{2,h}(\overline{\theta}_1\psi_{a_1})(x,t) e^{-i\varphi/h}\dv\dd t
-\int_0^T\!\!\!\int_\M a(x)\p_tr_{2,h}(x,t)\overline{r}_{1,h}(x,t)\dv\dd t\cr
-\int_0^T\!\!\!\int_\M q(x)(\overline{\theta}_1\psi_{-a_1})(x,t)r_{2,h}(x,t)e^{-i\varphi/h}\dv\dd t -\int_0^T\!\!\!\int_\M q(x)(\theta_2\psi_{a_2})(x,t)\overline{r}_{1,h}(x,t)e^{i\varphi/h}\dv\dd t\cr -\int_0^T\!\!\!\int_\M q(x)(r_{2,h}\overline{r}_{1,h})(x,t)\dv\dd t .
\end{multline}
We set 
$$\int_0^T\!\!\!\int_\M q(x)(\theta_2\overline{\theta}_1)(x,t)\psi_{a}(x,t)\,\dv dt = -\int_0^T\!\!\!\int_{\p \M} \overline{g}_h\para{\Lambda_{a_1,q_1}-\Lambda_{a_2,q_2}}f_{h} \ds \, \dd t +\mathscr{R}'_h.$$
In view of (\ref{4.7}) and (\ref{4.4}), we have
\begin{equation}\label{5.4}
\abs{\mathscr{R}'_h}\leq C\para{h+h^{-1} \|a\|_{\mathcal{C}^0(\M)}}\norm{\theta_1}_{\mathcal{V}(Q)}\norm{\theta_2}_{\mathcal{V}(Q)}.
\end{equation}
On the other hand, by the trace theorem, we find
\begin{eqnarray}\label{5.5}
\bigg|\int_0^T\!\!\!\int_{\p \M}\para{\Lambda_{a_1,q_1}-\Lambda_{a_2,q_2}}(f_{h}) \overline{g}_h \ds \, \dd t \bigg|
&\leq & \norm{\Lambda_{a_1,q_1}-\Lambda_{a_2,q_2}} \norm{f_h}_{H^{1}(\Sigma)}\norm{g_h}_{L^2(\Sigma)}\cr
&\leq & Ch^{-3}\norm{\theta_1}_{\mathcal{V}(Q)}\norm{\theta_2}_{\mathcal{V}(Q)}\norm{\Lambda_{a_1,q_1}
-\Lambda_{a_2,q_2}}.
\end{eqnarray}
Furthermore
\begin{multline} \label{5.6}
\int_0^T\!\!\!\int_\M q(x)(\theta_2\overline{\theta}_1)(x,t)\,\dv dt = \int_0^T\!\!\!\int_\M q(x)(\theta_2\overline{\theta}_1)(x,t)(1-\psi_{a}(x,t))\,\dv dt \cr + \int_0^T\!\!\!\int_\M q(x)(\theta_2\overline{\theta}_1)(x,t)\psi_{a}(x,t)\,\dv dt
\end{multline}
and since
$$
\abs{1-\psi_a(x,t)}\leq C\norm{a}_{\mathcal{C}(\M)}
$$
we deduce that 
 $$
 \bigg|\int_0^T\!\!\!\int_\M q(x)(\theta_2\overline{\theta}_1)(x,t)(1-\psi_{a}(x,t))\,\dv dt\bigg| \leq C\norm{a}_{\mathcal{C}^0(\M)} \norm{\theta_1}_{\mathcal{V}(Q)}\norm{\theta_2}_{\mathcal{V}(Q)}.$$ Combining with (\ref{5.4}), (\ref{5.5}) and (\ref{5.6}), the estimate (\ref{5.2}) follows.\\
  This completes the proof of the Lemma.
\end{proof}
\begin{lemma}\label{L.5.2} There exists $C>0$ such that for any $\Psi\in H^2 (S_y\M_{1})$, the following estimate
\begin{equation}\label{5.7}
\bigg|\int_{S_{y}\M_1}\I q(y,\xi)
\Psi(\xi)  \,  \,
\dd\omega_y(\xi)\bigg|
\leq C\para{h+h^{-1} \norm{a}_{\mathcal{C}^0(\M)}+h^{-3}\norm{\Lambda_{a_1,q_1}
-\Lambda_{a_2,q_2}}
} \norm{\Psi}_{H^2(S^+_y\M_{1})},
\end{equation}
holds for any $y\in\p\M_1$.
\end{lemma}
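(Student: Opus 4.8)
The plan is to repeat the argument of Lemma~\ref{L.4.2}, with the preliminary estimate Lemma~\ref{L.4.1} replaced by Lemma~\ref{L.5.1}; since the potential term in \eqref{5.2} carries no absorption factor $\psi_a$, the passage to the geodesic ray transform of $q$ is in fact more direct than the one for $a$.

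First I would fix $y\in\p\M_1$ and pick the two geometric optics amplitudes as in Section~3. Take $\phi\in\mathcal{C}_0^\infty(0,\varepsilon)$ normalized by $\int_\R\phi^2=1$, and in the polar normal coordinates $x=\exp_y(r\xi)$ centered at $y$ set
\[
\widetilde\theta_1(t,r,\xi)=\alpha^{-1/4}\phi(t-r)\overline{\Psi(\xi)},\qquad
\widetilde\theta_2(t,r,\xi)=\alpha^{-1/4}\phi(t-r).
\]
By the computations \eqref{3.13}--\eqref{3.19} both solve the first transport equation \eqref{3.11}, and since $\mathrm{supp}(\phi)\subset(0,\varepsilon)$ and $T>\mathrm{Diam}_\g(\M_1)+2\varepsilon$ they vanish together with their first two time derivatives at $t=0$ and $t=T$; hence $\theta_1,\theta_2\in\mathcal{V}(Q)$ with
\[
\norm{\theta_1}_{\mathcal{V}(Q)}\leq C\norm{\Psi}_{H^2(S^+_y\M_1)},\qquad \norm{\theta_2}_{\mathcal{V}(Q)}\leq C.
\]
Next I would evaluate the left-hand side of \eqref{5.2} for this pair. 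Changing variables $x=\exp_y(r\xi)$, the factor $\alpha^{-1/2}$ produced by $\widetilde\theta_2\,\overline{\widetilde\theta}_1$ is cancelled by the Jacobian $\alpha^{1/2}$ of $\dv$, and since $q$ vanishes outside $\M$,
\[
\int_0^T\!\!\int_\M q(x)(\theta_2\overline{\theta}_1)(x,t)\,\dv\,\dd t=\int_0^T\!\!\int_{S^+_y\M_1}\!\!\int_0^{\ell_+(y,\xi)}\widetilde q(r,y,\xi)\,\phi^2(t-r)\,\Psi(\xi)\,\dd r\,\dd\omega_y(\xi)\,\dd t .
\]
For each $r\in[0,\ell_+(y,\xi)]$ one has $\int_0^T\phi^2(t-r)\,\dd t=\int_\R\phi^2=1$, because $\mathrm{supp}(\phi)\subset(0,\varepsilon)$ while $T-r>T-\mathrm{Diam}_\g(\M_1)>2\varepsilon$; thus the $t$-integral collapses. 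Recalling $\int_0^{\ell_+(y,\xi)}\widetilde q(r,y,\xi)\,\dd r=\I q(y,\xi)$ and extending $\I q$ by zero on $\p_-S\M_1$, the left-hand side of \eqref{5.2} equals $\int_{S_y\M_1}\I q(y,\xi)\,\Psi(\xi)\,\dd\omega_y(\xi)$.

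Applying Lemma~\ref{L.5.1} to this pair $(\theta_1,\theta_2)$ and inserting the bounds $\norm{\theta_1}_{\mathcal{V}(Q)}\leq C\norm{\Psi}_{H^2(S^+_y\M_1)}$ and $\norm{\theta_2}_{\mathcal{V}(Q)}\leq C$ then yields \eqref{5.7} at once. The only steps that need care are the two already implicit in Lemma~\ref{L.4.2}: checking that the WKB amplitude built from $\Psi$ genuinely lies in $\mathcal{V}(Q)$ with the stated norm control — which rests on the simplicity of $\M_1$ (so that the polar normal coordinates centered at $y\in\p\M_1$ are global and $\alpha$ is smooth on $\overline{\M}$) and on the time-of-flight condition $T>\mathrm{Diam}_\g(\M_1)+2\varepsilon$, which secures the temporal boundary conditions defining $\mathcal{V}(Q)$ — and the bookkeeping of the volume element $\alpha^{1/2}$ in the change to polar normal coordinates. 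Everything else is a direct consequence of Lemma~\ref{L.5.1}, exactly as in the proof of the absorption-coefficient estimate.
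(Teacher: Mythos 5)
Your proposal is correct and follows essentially the same route as the paper: choose $\widetilde\theta_1=\alpha^{-1/4}\phi(t-r)\Psi(\xi)$ and $\widetilde\theta_2=\alpha^{-1/4}\phi(t-r)$, change to polar normal coordinates so the Jacobian $\alpha^{1/2}$ cancels the amplitudes, use the support of $\phi$ to collapse the $t$-integral to $\int_\R\phi^2=1$, and apply Lemma \ref{L.5.1} with the bounds $\norm{\theta_1}_{\mathcal{V}(Q)}\leq C\norm{\Psi}_{H^2(S^+_y\M_1)}$, $\norm{\theta_2}_{\mathcal{V}(Q)}\leq C$. Your explicit verification of the temporal boundary conditions and the placement of the conjugate on $\Psi$ are slightly more careful than the paper's write-up, but the argument is the same.
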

\begin{proof}
As in Lemma \ref{L.4.2}, we take two solutions to (\ref{3.11}) of the form
\begin{align*}
\widetilde{\theta}_1(t,r,\xi)&=\alpha^{-1/4}\phi(t-r)\Psi(\xi), \\
\widetilde{\theta}_2(t,r,\xi)&=\alpha^{-1/4}\phi(t-r).
\end{align*}
We change variable in the left term of (\ref{5.2}), $x=\exp_{y}(r\xi)$, $r>0$ and $\xi\in S_{y}\M_1$. We have
\begin{multline}\label{5.8}
\int_0^T\!\!\int_\M q(x) (\overline{\theta}_1\theta_2)(x,t) \dv \, \dd t \cr
=\int_0^T\!\!\int_{S_{y}\M_1}\!\int_0^{\ell_+(y,\xi)}\widetilde{q}(r,y,\xi)(\overline{\widetilde{\theta}}_1\widetilde{\theta}_2)( t,r,\xi)
\alpha^{1/2} \, \dd r \, \dd\omega_y(\xi) \, \dd t\cr
=\int_0^T\!\!\int_{S_{y}\M_1}\!\int_0^{\ell_+(y,\xi)}\widetilde{q}(r,y,\xi)\phi^2( t-r)\Psi(\xi)  \, \dd r \,
\dd\omega_y(\xi)
\, \dd t.
\end{multline}
By support properties of the function $\phi$, we get
$$\int_0^T\!\!\int_\M q(x) (\overline{\theta}_1\theta_2)(x,t) \dv \, \dd t=\bigg(\int_\R\phi^2(t)\,dt \bigg) \!\!\int_{S_{y}\M_1}\I q(y,\xi)
\Psi(\xi)  \,  \,
\dd\omega_y(\xi).
$$
Then, by (\ref{5.2}), we get
\begin{equation}\label{5.9}
\bigg|\int_{S_{y}\M_1}\I q(y,\xi)
\Psi(\xi)  \,  \,
\dd\omega_y(\xi)\bigg|
\leq C\para{h+h^{-1} \|a\|_{\mathcal{C}^0(\M)}+h^{-3}\norm{\Lambda_{a_1,q_1}
-\Lambda_{a_2,q_2}}
} \norm{\Psi}_{H^2(S^+_y\M_{1})}.
\end{equation}
This compete the proof.
\end{proof}
Let us now complete the proof of the stability estimate of the electric potentiel. We recall the definition of the Poisson kernel of $B(0,1)\subset T_y \M_1$, i-e.,
$$
P(\theta,\xi)=\frac{1-\abs{\theta}^2}{\alpha_n\abs{\theta-\xi}^n},\quad \theta\in B(0,1);\,\, \xi\in S_y\M_1.
$$
For $0<\kappa<1$, we define $\Psi_\kappa: S_y\M_1\times S_y\M_1\to \R$ as
\begin{align}\label{5.10}
\Psi_\kappa(\theta,\xi)=P(\kappa\theta,\xi).
\end{align}
\begin{lemma}\label{L.5.3}
 Let $a_{i}\in \mathscr{A}(m_1,\alpha)$, $q_i\in \mathscr{Q}(m_2)$, $i=1,\,2$. There exist $C>0,$
$\delta>0$, $\beta>0$ and $h_{0}>0$ such that 
\begin{equation}\label{5.11}
|\I(q)(y,\theta)|\leq C \Big( h^{-\delta}\|\Lambda_{a_{2},b_{2}}-\Lambda_{a_{1},b_{1}}\|+ h^{-\delta}\|a\|_{\mathcal{C}^0(\M)}+h^{\beta}  \Big),\quad\forall \, (y,\theta)\in\p_+S\M_1,
\end{equation}
for any $h\leq h_{0}$. Here $C$ depends only on $\M$, $T$,
$m_{1}$ and $m_{2}$.
\end{lemma}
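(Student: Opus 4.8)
The plan is to repeat the argument of Lemma \ref{L.4.4}, with $\I q$ playing the role of $\exp\bigl(-\tfrac12\I(a)\bigr)-1$. Fix $(y,\theta)\in\partial_+ S\M_1$ and extend $\I q(y,\cdot)$ by zero on $\partial_- S\M_1$; since $q$ vanishes near $\partial\M$ this extended function is Lipschitz on the whole sphere $S_y\M_1$. Using the normalization \eqref{4.18} of the Poisson kernel, write
\[
\I q(y,\theta)=\int_{S_y\M_1}\Psi_\kappa(\theta,\xi)\bigl[\I q(y,\theta)-\I q(y,\xi)\bigr]\,d\omega_y(\xi)+\int_{S_y\M_1}\Psi_\kappa(\theta,\xi)\,\I q(y,\xi)\,d\omega_y(\xi).
\]
First I would handle the two terms separately.

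For the first term, I would use that $\I q$ is Lipschitz in the direction variable, namely $|\I q(y,\theta)-\I q(y,\xi)|\le C\,|\theta-\xi|$ with $C$ depending only on $m_2$, $T$ and $\M_1$ (this follows from the smoothness of $\ell_+$ on $\partial_+S\M_1$ together with $q\in W^{2,\infty}(\M_1)$ and the strict convexity of $\partial\M_1$), and then invoke \eqref{4.19} to get $\int_{S_y\M_1}\Psi_\kappa(\theta,\xi)\,|\theta-\xi|\,d\omega_y(\xi)\le C(1-\kappa)^{1/2n}$. For the second term I would apply Lemma \ref{L.5.2} with $\Psi=\Psi_\kappa(\theta,\cdot)$ and control its $H^2$-norm by \eqref{4.20}, $\|\Psi_\kappa(\theta,\cdot)\|_{H^2(S_y\M_1)}^2\le C(1-\kappa)^{-(n+3)}$. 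Combining these gives
\[
|\I q(y,\theta)|\le C(1-\kappa)^{1/2n}+C\bigl(h+h^{-1}\|a\|_{\mathcal{C}^0(\M)}+h^{-3}\|\Lambda_{a_1,q_1}-\Lambda_{a_2,q_2}\|\bigr)(1-\kappa)^{-(n+3)}.
\]

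Finally, exactly as in the proof of Lemma \ref{L.4.4}, I would balance the two $h$-independent powers of $(1-\kappa)$ by choosing $(1-\kappa)=h^{2n/(1+2n^2+6n)}$, so that $(1-\kappa)^{1/2n}=h(1-\kappa)^{-(n+3)}$; absorbing the resulting negative powers of $h$ multiplying $\|\Lambda_{a_1,q_1}-\Lambda_{a_2,q_2}\|$ and $\|a\|_{\mathcal{C}^0(\M)}$ into a single exponent $-\delta$, and the remaining positive power of $h$ into $\beta>0$, yields \eqref{5.11} uniformly in $(y,\theta)\in\partial_+S\M_1$. I expect the proof to be somewhat more direct than that of Lemma \ref{L.4.4}, since $\I q$ enters \emph{linearly} and hence the final step ``$|X|\le e^{M}|e^X-1|$'' is not needed; the price is the extra term $h^{-\delta}\|a\|_{\mathcal{C}^0(\M)}$, which is harmless because the $\mathcal{C}^0$-stability of $a$ has already been obtained in \eqref{4.26}. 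The main point requiring care is ensuring that the Lipschitz constant for $\I q$ in the direction variable, together with the constants in \eqref{4.19}--\eqref{4.20}, are uniform in $y\in\partial\M_1$; this is guaranteed by strict convexity of $\partial\M_1$ and compactness, after which integrating \eqref{5.11} over $\partial_+S\M_1$ and invoking Theorem \ref{theorem} will give the stability estimate for $q$.
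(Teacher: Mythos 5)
Your proposal follows essentially the same route as the paper's proof of Lemma \ref{L.5.3}: the same Poisson-kernel decomposition via the normalization \eqref{4.18}, the same Lipschitz bound on $\I q$ in the direction variable, the same application of Lemma \ref{L.5.2} together with \eqref{4.19}--\eqref{4.20}, and the same choice $(1-\kappa)^{1/2n}=h(1-\kappa)^{-(n+3)}$. Your observation that the linearity of $\I q$ makes the final step $|X|\le e^{M}|e^{X}-1|$ of Lemma \ref{L.4.4} unnecessary is also exactly how the paper's argument simplifies here.
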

\begin{proof}
We fix $(y,\theta)\in\p_+S\M$ and let the positive function $\Psi_\kappa$ given by (\ref{5.10}),  we have
\begin{multline}\label{5.12}
\Big|\I(q)(y,\theta)\Big|=\Big|
\displaystyle\int_{S_y\M_1}\Psi_{\kappa}(\theta,\xi)\I(q)(y,\theta)\,d\omega_y(\xi)\Big|\cr
\leq\quad \Big| \displaystyle\int_{S_y\M_1}\Psi_{\kappa}(\theta,\xi)
\Big(\I(q)(y,\theta)
-\I(q)(y,\xi)\Big)  \,d\omega_y(\xi)\Big|\cr
 +\Big| \displaystyle\int_{S_y\M_1} \Psi_{\kappa}(\theta,\xi)\I(q)(y,\xi)
 d\omega_y(\xi)\Big|.
\end{multline}
Since we have
$$
\Big|\displaystyle \I(q)(y,\theta)-\I(q)(y,\xi)\Big|\leq C \,|\theta-\xi|,
$$ 
then we deduce upon applying Lemma \ref{L.5.2} with $\Psi=\Psi_{\kappa}(\theta,\cdot)$ that we have the following estimation
\begin{multline*}
\Big|\I(q)(y,\theta)\Big|\leq C\int_{S_y\M_1}\Psi_{\kappa}(\theta,\xi)\,|\theta-\xi|\,d\omega_y(\xi)\cr
+C\Big(h+ h^{-1}\|a\|_{\mathcal{C}^0(\M)}+h^{-3}\|\Lambda_{a_{2},q_{2}}-\Lambda_{a_{1},q_{1}}\|
\Big)\|\Psi_{\kappa}\|_{H^2(S_y\M_1)}^{2}.
\end{multline*}
 On the other hand, by (\ref{4.20}) and (\ref{4.19}), we obtain
$$
 \Big|\I(q)(y,\theta)\Big|\leq C \,(1-\kappa)^{1/2n}+C\Big(
h^{-3}\|\Lambda_{a_{2},q_{2}}-\Lambda_{a_{1},q_{1}}\|+ h^{-1}\|a\|_{\mathcal{C}^0(\M)}+h
\Big)(1-\kappa)^{-(3+n)}.
 $$
 Take $(1-\kappa)$ small such that $(1-\kappa)^{1/2n}=h(1-\kappa)^{-(n+3)}$,
that is $(1-\kappa)=h^{2n/1+2n^2+6n}$, we find two constants $\delta>0$ and $\beta>0$
such that
$$\Big| \I(q)(y,\theta) \Big|\leq
C \Big[
h^{-\delta}\|\Lambda_{a_{2},q_{2}}-\Lambda_{a_{1},q_{1}}\|+ h^{-\delta}\norm{a}_{\mathcal{C}^0(\M)}+h^{\beta}
\Big].$$
The proof of Lemma \ref{L.5.3} is complete.
\end{proof}
Integrating the estimate (\ref{5.11}) over $\p_+S\M_1,$ with respect to $ \mu(y,\theta) \,  
\dss (y,\theta)$, then minimizing on $h$, we get 
\begin{equation}\label{5.13}
\|\I(q)\|_{L^2(\p_+S\M_1)}\leq C ( \|\Lambda_{a_{2},q_{2}}-\Lambda_{a_{1},q_{1}}\|+\|a\|_{\mathcal{C}^0(\M)})^{\frac{\beta}{\beta+\delta}}.
\end{equation}
By interpolation inequality, we get
\begin{align}\label{5.14}
\norm{\I(q)}^2_{H^1(\p_+S\M_1)} & \leq C \|\I(q)\|_{L^2(\p_+S\M_1)}\|\I(q)\|_{H^2(\p_+S\M_1)} \cr 
&  \leq C \|\I(q)\|_{L^2(\p_+S\M_1)}.
\end{align}
From the Theorem \ref{theorem}, it follows that 
\begin{equation}\label{5.15}
\|q\|_{L^2(\M)}\leq C ( \|\Lambda_{a_{2},q_{2}}-\Lambda_{a_{1},q_{1}}\|+\|a\|_{\mathcal{C}^0(\M)})^{\frac{\beta}{\beta+\delta}}.
\end{equation}
Using the estimate (\ref{4.26}), we conclude that
$$
\|q\|_{L^2(\M)}\leq C  \|\Lambda_{a_{2},q_{2}}-\Lambda_{a_{1},q_{1}}\|^{s_1},
$$ 
where $s_1 \in (0,1)$.
This completes the proof of the Theorem \ref{Th2}.
\appendix
\section{Proof of Lemma \ref{L.4.3}}
\setcounter{equation}{0}
We define the Poisson kernel of $B(0,1)\subset T_y \M_1$, i-e.,
$$
P(\theta,\xi)=\frac{1-\abs{\theta}^2}{\alpha_n\abs{\theta-\xi}^n},\quad \theta\in B(0,1);\,\, \xi\in S_y\M_1.
$$
For $0<\kappa<1$, we define $\Psi_\kappa: S_y\M_1\times S_y\M_1\to \R$ as
$$
\Psi_\kappa(\theta,\xi)=P(\kappa\theta,\xi).
$$
Let $P_0$ the Poisson kernel for the Euclidian unit ball $B_0(0,1)\subset\R^n$ i.e., 
$$
P_0(\hat{\theta},\hat{\xi})=\frac{1-\abs{\hat{\theta}}_0^2}{\alpha_n\abs{\hat{\theta}-\hat{\xi}}_0^n},\quad \hat{\theta}\in B_0(0,1);\,\, \hat{\xi}\in \mathbb{S}^{n-1}.
$$
where $\abs{\cdot}_0$ is the Euclidian norm of $\R^n$. From the well known properties of $P_0$, we have 
$$
\int_{\mathbb{S}^{n-1}}P_0(\kappa\hat{\theta},\hat{\xi}) d\omega_0(\hat{\xi})=1, \quad\textrm{for all}\, \kappa\in (0,1),\,\, \hat{\theta}\in \mathbb{S}^{n-1}.
$$
Let $\gamma=\sqrt{\g}:=(\gamma_{ij})$ be definite symetric positive matrix such that $\gamma^2=(g_{ij})$, then we get
$$
P(\theta,\xi)=\frac{1-\abs{\gamma^{-1}\theta}_0^2}{\alpha_n\abs{\gamma^{-1}\theta-\gamma^{-1}\xi}_0^n},\quad \theta,\xi\in S_y\M.
$$
we deduce from the change of variable $\hat{\xi}=\gamma^{-1}\xi$
\begin{eqnarray}\label{A1}
\int_{S_y\M_1}P(\kappa\theta,\xi)d\omega_y(\xi)&=&\int_{S_y\M_1}P_0(\kappa\gamma^{-1}\theta,\gamma^{-1}\xi)d\omega_y(\xi)\cr
&=&\frac{1}{\det\gamma}\int_{\mathbb{S}^{n-1}}P_0(\kappa\gamma^{-1}\theta,\hat{\xi})(\det\gamma) d\omega_0(\hat{\xi})=1
\end{eqnarray}
This complete the proof of (\ref{4.18}).
Let now
$$
V_\theta=\set{\xi\in S_y\M_1,\,\,\abs{\kappa\theta-\xi}\leq (1-\kappa)^{1/2n}}.
$$
\begin{multline}\label{A.2}
\int_{S_y\M_1}\Psi_\kappa(\theta,\xi) \abs{\theta-\xi}d\omega_y(\xi)\leq\int_{S_y\M_1}\Psi_\kappa(\theta,\xi) \abs{\kappa\theta-\xi}d\omega_y(\xi)+(1-\kappa)\cr
\int_{V_\theta}\Psi_\kappa(\theta,\xi) \abs{\kappa\theta-\xi}d\omega_y(\xi)+\int_{S_y\M_1\backslash V_\theta}\Psi_\kappa(\theta,\xi) \abs{\kappa\theta-\xi}d\omega_y(\xi)+(1-\kappa)\cr
(1-\kappa)^{1/2n}+4(1-\kappa)^{1/2}+(1-\kappa)\leq C(1-\kappa)^{1/2n},
\end{multline}
and then we get (\ref{4.19}).\\
By a simple computation, we have
$$
\abs{\nabla_\xi^k P(\kappa\theta,\xi)}\leq C\frac{1-\kappa^2}{\abs{\kappa\theta-\xi}^{n+k}},\quad k=1,2,
$$
we deduce that
\begin{equation}
\norm{ \Psi_\kappa(\theta,\cdot)}^2_{H^2(S_y\M_1)} \leq C\frac{1-\kappa^2}{(1-\kappa)^{n+4}}\int_{S_y\M_1}\frac{1-\kappa^2}{\abs{\kappa\theta-\xi}^n} d\omega_y(\xi)\leq 
\frac{C}{(1-\kappa)^{n+3}}.
\end{equation}
This complete the proof of (\ref{4.20}).

\end{document}